\definecolor{coldeg1}{HTML}{a3be8c}
\definecolor{coldeg2}{HTML}{ebcb8b}
\definecolor{coldeg3}{HTML}{88c0d0}
\definecolor{coldeg4}{HTML}{d08770}
\definecolor{coldeg5}{HTML}{b48ead}
\definecolor{coldeg6}{HTML}{BF616A}
\title{Saturated Fully Leafed Tree-Like Polyforms and Polycubes}
\author{Alexandre Blondin Mass\'e
  \and Julien de Carufel
  \and Alain Goupil
}
\newcommand\N{\mathbb{N}}
\newcommand\Z{\mathbb{Z}}
\newcommand\R{\mathbb{R}}
\newcommand\dist{\mathrm{dist}}
\newcommand\depth{\emph{depth}}
\newcommand\grille{\mathcal{G}_2}
\newcommand\cube{\mathcal{G}_3}
\newcommand\grounded{\mathcal{GP}}
\newcommand\treelikepolyset{\mathcal{T_\squ}}
\newcommand\satpolyset{\mathcal{ST_\squ}}
\newcommand\satcubset{\mathcal{ST_\cub}}
\newcommand\ftcubset{4\mathcal{T}_i}
\newcommand\Ext{\mathrm{Ext}}
\newcommand\Int{\mathrm{Int}}
\newcommand\Hull{\mathrm{Hull}}
\newcommand\hex{\mathrm{hex}}
\newcommand\Hex{\mathrm{Hex}}
\newcommand\tri{\mathrm{tri}}
\newcommand\Tri{\mathrm{Tri}}
\newcommand\squ{\mathrm{squ}}
\newcommand\cub{\mathrm{cub}}
\def\graft{\; \triangleleft \;}
\newenvironment{enum}{\begin{enumerate}[label={\rm(\roman*)}, noitemsep, nolistsep]}{\end{enumerate}}
\newtheorem{theorem}{Theorem}[section]
\newtheorem{lemma}[theorem]{Lemma}
\newtheorem{proposition}[theorem]{Proposition}
\newtheorem{definition}[theorem]{Definition}
\newtheorem{corollary}[theorem]{Corollary}
\begin{document}

\maketitle

\begin{abstract}
  We present recursive formulas giving the maximal number of leaves in tree-like polyforms living in two-dimensional regular lattices and in tree-like polycubes   in the three-dimensional cubic lattice.
  We call these tree-like polyforms and polycubes \emph{fully leafed}.
  The proof relies on a combinatorial algorithm that enumerates rooted directed trees that we call abundant.
  In the last part, we concentrate on the particular case of polyforms and polycubes, that we call \emph{saturated}, which is the family of fully leafed structures that maximize the ratio  
  $\mbox{(number of leaves)}/\mbox{ (number of cells)}$.
  In the polyomino case, we present a bijection between the set of saturated tree-like polyominoes of size $4k+1$ and the set of tree-like polyominoes of size $k$.  
  We exhibit a similar bijection between the set of saturated tree-like polycubes of size $41k+28$ and a family of polycubes, called $4$-trees, of size $3k+2$.
\end{abstract}

\section{Introduction}
\label{S:intro}

Polyominoes and, to a lesser extent, polyhexes, polyiamonds and polycubes have been the object of important investigations in the past $30$ years either from a game theoretic or from a combinatorial point of view (see \cite{hr,Gu} and references therein).
Recall that a polyomino is an edge-connected set of unit cells in the square lattice that is invariant under translation.
There are two other regular lattices in the euclidian plane namely the hexagonal lattice and the triangular lattice which contain analogs of polyominoes respectively called polyhexes and polyiamonds.  
All these connected sets of planar cells are known under the name polyform. 
The 3D equivalent of a polyomino is called a polycube.
It is a face-connected set of unit cells in the cubic lattice, up to translation.

A central problem has been the search for the number of polyforms with $n$ cells where $n$ is called the size of the polyform.
This problem, still open, has been investigated from several points of view; asymptotic evaluation \cite{kr}, computer generation and counting \cite{je,kn,Re}, random generation \cite{hlm} and combinatorial description \cite{bfr,gpd,hr}.
Combinatorists have also concentrated their efforts in the description of various families of polyominoes and polycubes, such as convex polyominoes \cite{bmg}, parallelogram polyominoes \cite{aadhl,dv}, tree-like polyominoes \cite{gcn} and other families \cite{bmr,cfmrr,cdcj}. 
 
In this paper, we are interested in several related sets of polyforms: two-dimensional {\it tree-like polyforms} and three-dimensional {\it tree-like polycubes} which are acyclic in the graph theoretic sense.
Our main results are recursive expressions giving the maximal number of leaves of tree-like polyforms in the square, hexagonal and triangular regular lattices and also of tree-like polycubes of size $n$.
A tree-like polyform of size $n$ is called {\it fully leafed} when it contains the maximum number of leaves among all tree-like polyforms of size $n$.
The function $L_f(n)$ which gives the number of leaves in a fully leafed two-dimensional tree-like polyform with $n$ cells in the regular lattice $f$ is called the \emph{leaf function} of $f$.
Simillarly we denote by $L_\cub(n)$ the leaf function of the cubic lattice.
 
We also present explicit expressions for the number of saturated tree-like polyhexes and polyiamonds of given size $n$.
The structure of tree-like polyforms under investigation is similar to that which solves the maximum leaf spanning tree problem in grid graphs, one of the classical NP-complete problems described by Garey and Johnson in their seminal paper \cite{gj, bcglnv1}.
Both problems are concerned with the maximization of the number of leaves in subtrees, but they present a fundamental difference.
On one hand, spanning trees of a graph $G$ must contain all vertices of $G$.
On the other hand, induced subtrees $T$ of $G$ must contain every edge of $G$ between two vertices of $T$.
To our knowledge, these new classes of polyforms which are induced subgraphs of infinite regular lattice graphs present remarkable structure and properties that have neither been considered nor investigated yet. 
For example, the snake in the box problem \cite{ka}, which searches induced subtrees of maximal size with two leaves deserve, from our point of view, as much attention as the Hamiltonian path problem which search spanning trees with two leaves. 

The problem of finding the {\it maximum} number of {\it leaves} in tree-like polyforms extends naturally to the more general {\it Maximum Leaves in Induced Subtrees} (MLIS) problem, which consists in looking for induced subtrees having a maximum number of leaves in any simple graph.
Preliminary results about the MLIS problem can be found in \cite{bcglnv1} and \cite{bcglnv2}. 

This document is organized as follows.
In Section \ref{S:prelem} we introduce the concepts on graph theory and polyforms necessary for our treatment.
In Section \ref{S:polyominoes} we study fully leafed polyominoes and we introduce a general methodology for our proofs.
Section \ref{S:polyhexes-polyiamonds} focuses on the case of polyhexes and polyiamonds.
The more intricate case of tree-like polycubes is discussed in Section \ref{S:polycubes}. In particular, our proofs rely on an operation called \emph{graft union}, which allows to track efficiently the number of leaves.

In Section \ref{S:Saturated} we shift our attention to the family of saturated tree-like polyforms and polycubes and establish bijections for polyominos and polycubes that provide key informations for their enumeration.
Finally in Section \ref{S:conclusion} we conclude with asymptotic lower and upper bounds for the numbers $L_d(n)$ of leaves of $d$-dimensional tree-like polycubes and we sketch some directions for future work.

This manuscrit is an extended version of a paper presented at the 28th International Workshop on Combinatorial Algorithms (IWOCA 2017), held in Newcastle, Australia \cite{iwoca}.

\section{Preliminaries}\label{S:preliminaries}
\label{S:prelem}

Let $G = (V,E)$ be a simple graph, $u \in V$ and $U \subseteq V$.
The set of neighbors of $u$ in $G$ is denoted $N_G(u)$ and it is naturally extended to $U$ by defining $N_G(U) = \{u' \in N_G(u) \mid u \in U\}$.
For any subset $U \subseteq V$, the \emph{subgraph induced by $U$} is the graph $G[U] = (U, E \cap \mathcal{P}_2(U))$, where $\mathcal{P}_2(U)$ is the set of $2$-elements subsets of $V$.
The \emph{extension} of $G[U]$ is defined by $\Ext(G[U]) = G[U \cup N_G(U)]$ and the \emph{interior} of $G[U]$ is defined by $\Int(G[U]) = G[\Int(U)]$, where $\Int(U) = \{u' \in U \mid N_G(u') \subseteq U\}$.
Finally, the \emph{hull} of $G[U]$ is defined by $\Hull(G[U]) = \Int(\Ext(G[U]))$.

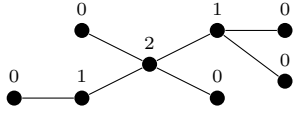
\begin{figure}[t]
  \centering
  \begin{tikzpicture}[scale=0.45, vertex/.style={fill=black, circle, minimum size=2mm, inner sep=0pt}]
  {\scriptsize
  \node[vertex, label=90:$2$] (a) at (0,0)   {};
  \node[vertex, label=90:$1$] (b) at (2,1)   {};
  \node[vertex, label=90:$0$] (c) at (2,-1)  {};
  \node[vertex, label=90:$0$] (d) at (4,-0.5){};
  \node[vertex, label=90:$0$] (e) at (4,1)   {};
  \node[vertex, label=90:$0$] (f) at (-2,1)  {};
  \node[vertex, label=90:$1$] (g) at (-2,-1) {};
  \node[vertex, label=90:$0$] (h) at (-4,-1) {};
  }
  \path (a) edge (b) edge (c) edge (f) edge (g);
  \path (b) edge (d) edge (e);
  \path (g) edge (h);
\end{tikzpicture}
  \caption{The depth of the vertices in a tree.}
  \label{F:depth}
\end{figure}

The \emph{square lattice} is the infinite simple graph $\grille = (\Z^2, A_4)$, where $A_4$ is the \emph{$4$-adjacency relation} defined by $A_4 = \{(p,p') \in \Z^2 \mid \dist(p, p') = 1\}$ and $\dist$ is the Euclidean distance of $\R^2$.
For any $p \in \Z^2$, the set $c(p) = \{ p' \in \R^2 \mid \dist_\infty(p, p') \leq 1/2\}$, where $\dist_\infty$ is the uniform distance of $\R^2$, is called the \emph{square cell} centered in $p$. The function $c$ is naturally extended to subsets of $\Z^2$ and subgraphs of $\grille$.
For any finite subset $U \subseteq \Z^2$, we say that $\grille[U]$ is a \emph{grounded polyomino} if it is connected.
The set of all grounded polyominoes is denoted by $\grounded$.
Given two grounded polyominoes $P = \grille[U]$ and $P' = \grille[U']$, we write $P \equiv_t P'$ (resp. $P \equiv_i P'$) if there exists a translation $T : \Z^2 \rightarrow \Z^2$ (resp. an isometry $I$ on $\Z^2$) such that $U' = T(U)$ (resp. $U' = I(U)$).
A \emph{fixed polyomino} (resp. \emph{free polyomino}) is then an element of $\grounded / \equiv_t$ (resp. $\grounded / \equiv_i$).
Clearly, any connected induced subgraph of $\grille$ corresponds to exactly one connected set of square cells via the function $c$.
Consequently, from now on, polyominoes will be considered as simple graphs rather than sets of edge-connected square cells. 

All definitions in the above paragraph are extended to the \emph{hexagonal lattice} with the \emph{$6$-adjacency relation}, the \emph{triangular lattice} with the \emph{$3$-adjacency relation} and the \emph{cubic lattice} with the \emph{$6$-adjacency relation}.
We thus extend the definition of \emph{cell}, \emph{grounded polyomino}, \emph{fixed polyomino} and \emph{free polyomino} to these regular lattices accordingly.

Grounded polyominoes and polycubes are connected subgraphs of $\grille$ and $\cube$ and the terminology of graph theory becomes available.
A (grounded, fixed or free) \emph{tree-like polyomino} is therefore a (grounded, fixed or free) polyomino whose associated graph is a tree.
\emph{Tree-like polyforms and polycubes} are defined similarly.
Observe that if $u,v$ are adjacent cells in a tree-like polyomino $T$ then $\deg_T(u)+\deg_T(v)\leq 6$.
This observation extends to polycubes in $\Z^d$ with $d \geq 2$ where we have $\deg_T(u) + \deg_T(v) \leq 2d + 2$.
In the figures, the vertices of graphs are colored according to their degree using the color palette below.
\begin{center}
  \begin{tabular}{|p{1.0cm}|cccccc|}
    \hline
    Degree & 1 & 2 & 3 & 4 & 5 & 6 \\\hline &&&&&& \\[-7pt]
    Color  & 
    \tikz[scale=0.5, baseline=1.2mm] \fill[coldeg1] (0,0) rectangle (1,1); &
    \tikz[scale=0.5, baseline=1.2mm] \fill[coldeg2] (0,0) rectangle (1,1); &
    \tikz[scale=0.5, baseline=1.2mm] \fill[coldeg3] (0,0) rectangle (1,1); &
    \tikz[scale=0.5, baseline=1.2mm] \fill[coldeg4] (0,0) rectangle (1,1); &
    \tikz[scale=0.5, baseline=1.2mm] \fill[coldeg5] (0,0) rectangle (1,1); &
    \tikz[scale=0.5, baseline=1.2mm] \fill[coldeg6] (0,0) rectangle (1,1); \\[4pt]\hline
  \end{tabular}
\end{center}

Let $T = (V,E)$ be any finite simple non empty tree.
We say that $u\in V$ is a \emph{leaf} of $T$ when $\deg_T(u) = 1$.
Otherwise $u$ is called an \emph{inner vertex} of $T$.
For any $d \in \N$, the number of vertices of degree $d$ is denoted by $n_d(T)$ and $n(T) = |V|$ is the number of vertices of $T$ which is also called the \emph{size} of $T$.
The \emph{depth} of $u\in V$ in $T$, denoted by $\depth_T(u)$, is defined recursively by
$$\depth_T(u) = \begin{cases}
  0, & \mbox{if $\deg_T(u) \leq 1$;} \\
  1 + \depth_{T'}(u), & \mbox{otherwise,}
\end{cases}$$
where $T'$ is the tree obtained from $T$ by removing all its leaves (see Figure~\ref{F:depth}).
Let $C$ be a tree whose set of inner vertices is $I$. We say that $C$ is a \emph{caterpillar} if $C[I]$ is a chain graph.

\section{Fully Leafed Tree-Like Polyominoes}\label{S:polyominoes}

In this section, we describe the number of leaves of fully leafed tree-like polyominoes.
For any integer $n \geq 2$, let the function $\ell_\squ(n)$ be defined as follows:
\begin{equation}\label{defl2}
  \ell_\squ(n) = \begin{cases}
    2,                 & \mbox{if $n = 2$;} \\
    n - 1,             & \mbox{if $n = 3,4,5$;} \\
    \ell_\squ(n - 4) + 2, & \mbox{if $n \geq 6$.}
  \end{cases}
\end{equation}

We claim that $\ell_\squ(n)=L_\squ(n)$ is the maximal number of leaves of a tree-like polyomino of size $n$. The first step is straightforward.
\begin{lemma}\label{L:family-2d}
For all $n \geq 2$, $L_\squ(n) \geq \ell_\squ(n)$.
\end{lemma}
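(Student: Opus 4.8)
The plan is to prove the inequality by an explicit construction. Since $L_\squ(n)$ is by definition the maximum number of leaves over all tree-like polyominoes of size $n$, it suffices to produce, for each $n \geq 2$, one tree-like polyomino $T_n$ with $n(T_n) = n$ and $n_1(T_n) = \ell_\squ(n)$. I would build the whole family by induction on $n$ in steps of $4$, exactly mirroring the recursion~\eqref{defl2}: the four residues modulo $4$ are seeded by the four base sizes $n \in \{2,3,4,5\}$, and each larger polyomino is obtained from the one of size $n-4$.

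For the base cases I would simply name the relevant small polyominoes embedded along a horizontal axis: the domino (two leaves, so $\ell_\squ(2)=2$), the straight tromino (two leaves $=n-1$), the $T$-tetromino whose central cell has degree $3$ (three leaves $=n-1$), and the plus-shaped pentomino, i.e.\ a central cell together with its four lattice neighbors (four leaves $=n-1$). Each is visibly acyclic and induced, and each realizes $\ell_\squ(n)$ for its size. I would also record that each has a distinguished \emph{rightmost leaf} $u$ on the axis $y=0$, whose parent $p$ is collinear with it, and whose outward region is free.

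For the inductive step $n \geq 6$, assuming such a $T_{n-4}$ with rightmost leaf $u=(x,0)$ and parent $p=(x-1,0)$, I would graft four new cells: the cell $a=(x+1,0)$ continuing the axis, and then the three remaining neighbors of $a$, namely $(x+2,0)$, $(x+1,1)$ and $(x+1,-1)$. The effect on degrees is clean: $u$ gains the neighbor $a$ and becomes an interior vertex of degree $2$ (losing one leaf), $a$ becomes an interior vertex of degree $4$, and the three outer cells are new leaves. The net change is $+4$ cells and $+3-1 = +2$ leaves, so $n_1(T_n) = n_1(T_{n-4}) + 2 = \ell_\squ(n-4) + 2 = \ell_\squ(n)$ by the induction hypothesis and~\eqref{defl2}, while the new rightmost leaf $(x+2,0)$ reproduces the required configuration for the next step.

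The only real content is geometric: the gadget yields a genuinely tree-like (acyclic and induced) polyomino only if the four new positions, together with their neighborhoods, neither overlap nor become $4$-adjacent to previously placed cells, as otherwise a cycle or an unwanted edge would appear. I expect this to be the main obstacle, and I would dispose of it by maintaining as an invariant that the entire family lives inside the horizontal strip $\{(x,y)\in\Z^2 : |y|\leq 1\}$, with its spine on $y=0$ and all vertical arms occurring at pairwise horizontal distance at least $2$. Under this invariant no two arms are ever $4$-adjacent, an arm is never adjacent to a non-parent spine cell, and the freshly added cells meet the existing structure only in the single intended edge $u a$; hence $T_n$ stays a tree and an induced subgraph of $\grille$. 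Checking that this strip invariant is preserved by the grafting step is the sole nontrivial verification, and it is exactly the kind of bookkeeping the graft-based formalism announced in the introduction is meant to make routine.
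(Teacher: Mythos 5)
Your proposal is correct and follows essentially the same route as the paper: exhibit the four small base polyominoes for $n=2,3,4,5$ and then, for $n\geq 6$, append the $4$-cell T-shaped gadget to the rightmost leaf of $T_{n-4}$, gaining $3$ leaves and losing $1$ for a net gain of $2$, matching the recursion~\eqref{defl2}. The only difference is that you make explicit the strip invariant guaranteeing the appended cells create no unwanted adjacencies, a verification the paper leaves implicit.
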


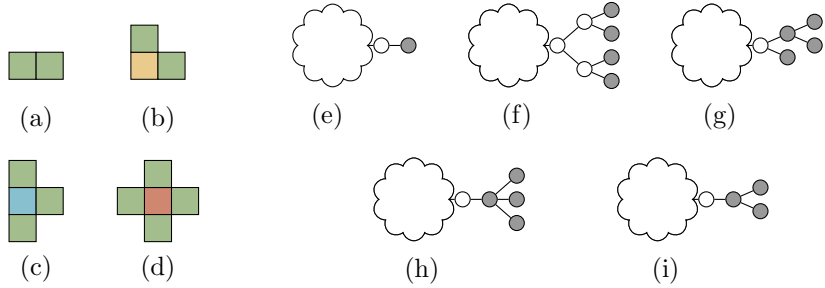
\begin{figure}[t]
  \centering
    \begin{tikzpicture}[scale=0.90]

  \begin{scope}[scale=.4, polyo/.style={draw=black, fill=black!30}]
    \begin{scope}
      \filldraw[polyo, fill=coldeg1] (0,0) rectangle (1,1);
      \filldraw[polyo, fill=coldeg1] (1,0) rectangle (2,1);
      \node at (1,-1.5) {(a)};
    \end{scope}
    \begin{scope}[xshift=4.5cm]
      \filldraw[polyo, fill=coldeg2] (0,0) rectangle (1,1);
      \filldraw[polyo, fill=coldeg1] (1,0) rectangle (2,1);
      \filldraw[polyo, fill=coldeg1] (0,1) rectangle (1,2);
      \node at (1,-1.5) {(b)};
    \end{scope}
    \begin{scope}[yshift=-5cm]
      \filldraw[polyo, fill=coldeg3] (0,0)  rectangle (1,1);
      \filldraw[polyo, fill=coldeg1] (1,0)  rectangle (2,1);
      \filldraw[polyo, fill=coldeg1] (0,1)  rectangle (1,2);
      \filldraw[polyo, fill=coldeg1] (0,-1) rectangle (1,0);
      \node at (1,-2.0) {(c)};
    \end{scope}
    \begin{scope}[xshift=5.0cm, yshift=-5cm]
      \filldraw[polyo, fill=coldeg4] (0,0)  rectangle (1,1);
      \filldraw[polyo, fill=coldeg1] (1,0)  rectangle (2,1);
      \filldraw[polyo, fill=coldeg1] (0,1)  rectangle (1,2);
      \filldraw[polyo, fill=coldeg1] (0,-1) rectangle (1,0);
      \filldraw[polyo, fill=coldeg1] (-1,0) rectangle (0,1);
      \node at (0.5,-2.0) {(d)};
    \end{scope}
  \end{scope}

  \begin{scope}[xshift=4.7cm, yshift=5mm, yscale=.35, xscale=.4, sommet/.style={draw=black, fill=white, minimum size=2mm, circle, inner sep=0pt}, prune/.style={sommet, fill=black!40}, every edge/.style={draw}, reste/.style={shape=cloud, draw=black, fill=white, minimum width=1.1cm, minimum height=1.1cm}]
    \begin{scope}
    \node[sommet] (a) at (2,0) {};
    \node[prune]  (b) at (3,0) {};
    \node[reste]  (r) at (0.2,0) {};
    \path (r) edge (a);
    \path (a) edge (b);
    \node at (0,-3) {(e)};
    \end{scope}
    \begin{scope}[xshift=6.5cm]
    \node[sommet] (a) at (1,0)    {};
    \node[sommet] (b) at (2,0)    {};
    \node[sommet] (c) at (3,1)    {};
    \node[sommet] (d) at (3,-1)   {};
    \node[prune]  (e) at (4,1.5)  {};
    \node[prune]  (f) at (4,0.5)  {};
    \node[prune]  (g) at (4,-0.5) {};
    \node[prune]  (h) at (4,-1.5) {};
    \node[reste]  (r) at (0.2,0)    {};
    \path (r) edge (a);
    \path (a) edge (b);
    \path (b) edge (c) edge (d);
    \path (c) edge (e) edge (f);
    \path (d) edge (g) edge (h);
    \node[reste]      at (0.2,0)    {};
    \node at (0.5,-3) {(f)};
    \end{scope}
    \begin{scope}[xshift=14cm]
    \node[sommet] (a) at (1,0)    {};
    \node[sommet] (b) at (2,0)    {};
    \node[prune]  (c) at (3,0.5)  {};
    \node[prune]  (d) at (3,-0.5) {};
    \node[prune]  (e) at (4,1)    {};
    \node[prune]  (f) at (4,0)    {};
    \node[reste]  (r) at (0.2,0)    {};
    \path (r.east) edge (a);
    \path (a) edge (b);
    \path (b) edge (c) edge (d);
    \path (c) edge (e) edge (f);
    \node[reste]      at (0.2,0)    {};
    \node at (0.5,-3) {(g)};
    \end{scope}
    \begin{scope}[yshift=-6.5cm, xshift=3cm]
    \node[sommet] (a) at (1,0)  {};
    \node[sommet] (b) at (2,0)  {};
    \node[prune]  (c) at (3,0)  {};
    \node[prune]  (d) at (4,0)  {};
    \node[prune]  (e) at (4,1)  {};
    \node[prune]  (f) at (4,-1) {};
    \node[reste]  (r) at (0.2,0)  {};
    \path (r.east) edge (a);
    \path (a) edge (b);
    \path (b) edge (c);
    \path (c) edge (d) edge (e) edge (f);
    \node[reste]  (r) at (0.2,0)  {};
    \node at (0.5,-3) {(h)};
    \end{scope}
    \begin{scope}[yshift=-6.5cm, xshift=12cm]
    \node[sommet] (a) at (1,0)    {};
    \node[sommet] (b) at (2,0)    {};
    \node[prune]  (c) at (3,0)    {};
    \node[prune]  (d) at (4,0.5)  {};
    \node[prune]  (e) at (4,-0.5) {};
    \node[reste]  (r) at (0.2,0)    {};
    \path (r.east) edge (a);
    \path (a) edge (b);
    \path (b) edge (c);
    \path (c) edge (d) edge (e);
    \node[reste]  (r) at (0.2,0)    {};
    \node at (0.5,-3) {(i)};
    \end{scope}
  \end{scope}
  \end{tikzpicture}
  \caption{Fully leafed tree-like polyominoes of size (a) $2$, (b) $3$, (c) $4$ and (d) $5$. The images (e), (f), (g), (h) and (i) depict the five cases of Lemma~\ref{L:upper-2d} (gray cells are removed).}
  \label{F:2d-case}
\end{figure}

\begin{proof}
  We build a family of tree-like polyominoes $\{T_n \mid n \geq 2\}$ whose number of leaves is given by \eqref{defl2}.
  For $n = 2,3,4,5$, the polyominoes $T_n$ respectively in (a), (b), (c) and (d) of Figure~\ref{F:2d-case} satisfy \eqref{defl2}.
  For $n \geq 6$, let $T_n$ be the polyomino obtained by appending the polyomino of Figure~\ref{F:2d-case}(c) to the right of $T_{n-4}$.

  By induction on $n$, we have $n_1(T_n) = \ell_\squ(n)$ for all $n \geq 2$, since the fact that appending the T-shaped polyomino of Figure \ref{F:2d-case}(c) adds $4$ cells and $3$ leaves, but subtracts $1$ leaf.
\end{proof}

In order to prove that the family $\{T_n \mid n \geq 2\}$ described in the proof of Lemma~\ref{L:family-2d} is maximal, we need the following result characterizing particular subtrees that appear in possible counter-examples of minimum size.
\begin{lemma}\label{L:24}
  Let $T$ be a tree-like polyomino of minimum size such that $n_1(T) > \ell_\squ(n(T))$ and let $T'$ be a tree-like polyomino such that $n(T') = n(T) - i$, for some $i \in \{1,3,4\}$.
  Also, let $\Delta\ell_\squ(1) = 0$, $\Delta\ell_\squ(3) = 1$ and $\Delta\ell_\squ(4) = 2$.
  Then $n_1(T) > n_1(T') + \Delta\ell_\squ(i)$.
\end{lemma}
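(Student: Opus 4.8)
The plan is to observe that the statement is, despite its geometric setting, an essentially arithmetic consequence of two facts: the minimality of $T$ as a counterexample, and the monotonicity behaviour of the sequence $\ell_\squ$. No structural information about the cells of $T$ or $T'$ will be needed beyond the hypothesis that $T'$ is itself a tree-like polyomino, so that minimality can be applied to it.

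First I would record that the minimal counterexample cannot be small. A direct inspection of the finitely many tree-like polyominoes of each size $n \in \{2,3,4,5\}$ shows that every one of them satisfies $n_1 \leq \ell_\squ(n)$, the extremal cases being exactly the polyominoes of Figure~\ref{F:2d-case}(a)--(d). Hence no counterexample has size at most $5$, so $n(T) \geq 6$; since $i \leq 4$ this gives $n(T') = n(T) - i \geq 2$, and in particular $\ell_\squ(n(T'))$ is well defined. Next I would exploit minimality directly: because $n(T') = n(T) - i < n(T)$ and $n(T') \geq 2$, the polyomino $T'$ is strictly smaller than the minimal counterexample $T$ and therefore is not itself a counterexample, i.e.
\[
  n_1(T') \leq \ell_\squ(n(T')) = \ell_\squ(n(T) - i).
\]
On the other hand, $T$ being a counterexample and both sides being integers gives $n_1(T) \geq \ell_\squ(n(T)) + 1$.

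The heart of the argument is then the purely numerical inequality
\[
  \ell_\squ(n) - \ell_\squ(n - i) \geq \Delta\ell_\squ(i)
  \qquad (i \in \{1,3,4\},\ n \geq 6),
\]
which I would establish by induction on $n$ from the recurrence $\ell_\squ(n) = \ell_\squ(n-4) + 2$. For $i = 4$ it is the identity $\ell_\squ(n) - \ell_\squi 4\,$\hspace{-0.6em}\relax$(n-4) = 2 = \Delta\ell_\squ(4)$, immediate from the recurrence. For $i = 1$ it reduces to the monotonicity $\ell_\squ(n) \geq \ell_\squ(n-1)$, which follows by induction after checking the base inequalities $\ell_\squ(2) \leq \ell_\squ(3) \leq \ell_\squ(4) \leq \ell_\squ(5)$. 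The case $i = 3$ is the only one demanding a little care: the quantity $\ell_\squ(n) - \ell_\squ(n-3)$ is periodic of period $4$ and takes only the values $1$ and $2$, both of which are at least $\Delta\ell_\squ(3) = 1$; this I would confirm by induction, the four residues modulo $4$ furnishing the base cases $n = 6,7,8,9$.

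Finally, chaining these estimates yields
\[
  n_1(T) \geq \ell_\squ(n(T)) + 1 \geq \ell_\squ(n(T) - i) + \Delta\ell_\squ(i) + 1 > n_1(T') + \Delta\ell_\squ(i),
\]
which is exactly the desired strict inequality. I expect the case $i = 3$ of the numerical inequality to be the only genuinely delicate point, and even there the difficulty is merely bookkeeping the period-$4$ pattern of $\ell_\squ$ correctly; everything else is a routine application of the minimality of $T$.
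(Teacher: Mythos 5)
Your proposal is correct and follows essentially the same route as the paper: establish the numerical inequality $\ell_\squ(k+i) \geq \ell_\squ(k) + \Delta\ell_\squ(i)$ by induction on the period-$4$ recurrence, then chain it with the minimality of $T$ (which forces $n_1(T') \leq L_\squ(n(T')) \leq \ell_\squ(n(T'))$) to obtain the strict inequality. Your additional check that $n(T) \geq 6$, so that $\ell_\squ(n(T'))$ is defined, is a small point of extra care that the paper leaves implicit.
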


\begin{proof}
It is easy to prove by induction that for any $k \geq 2$, $\ell_\squ(k + i) \geq \ell_\squ(k) + \Delta\ell_\squ(i)$, where $i \in \{1,3,4\}$.
Therefore,
\begin{alignat*}{2}
  n_1(T) &   >   \ell_\squ(n(T)),  {}&&\quad \mbox{by assumption,}\\
         &   =   \ell_\squ(n(T') + i), {}&& \quad  \mbox{by definition of $T'$,} \\
         & \geq  \ell_\squ(n(T')) + \Delta\ell_\squ(i), {}&& \quad \mbox{by the observation above,}\\
         & \geq  L_\squ(n(T')) + \Delta\ell_\squ(i), {}&&  \quad \mbox{by minimality of $n(T)$,} \\
         & \geq  n_1(T') + \Delta\ell_\squ(i), {}&& \quad \mbox{by definition of $L_\squ$,}
\end{alignat*}
concluding the proof.
\end{proof}

We are now ready to prove that the family $\{T_n \mid n \geq 2\}$ is maximal.
\begin{lemma}\label{L:upper-2d}
  For all $n \geq 2$, $ L_\squ(n) \leq \ell_\squ(n) $.
\end{lemma}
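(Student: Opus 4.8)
The plan is to argue by contradiction using the minimal counterexample already fixed in Lemma~\ref{L:24}. So let $T$ be a tree-like polyomino of minimum size with $n_1(T) > \ell_\squ(n(T))$, and write $n = n(T)$. First I would dispose of the small cases: a direct inspection of tree-like polyominoes of size $2,3,4,5$ (a domino, a tromino, the T-tetromino and the plus-pentomino in Figure~\ref{F:2d-case}(a)--(d)) shows $L_\squ(n) = \ell_\squ(n)$ there, using that a vertex has degree at most $4$; hence no counterexample of size at most $5$ exists and $n \geq 6$. The whole point of Lemma~\ref{L:24} is that for every tree-like polyomino $T'$ with $n(T') = n - i$ and $i \in \{1,3,4\}$ one has $n_1(T') < n_1(T) - \Delta\ell_\squ(i)$. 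Thus it suffices to delete $i$ cells from $T$, for a suitable $i \in \{1,3,4\}$, so as to produce a tree-like polyomino $T'$ with $n_1(T') \geq n_1(T) - \Delta\ell_\squ(i)$: this single inequality contradicts Lemma~\ref{L:24} and finishes the proof.

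To locate such a deletion I would first handle case~(e): if some leaf $b$ is adjacent to a vertex $a$ of degree $2$, delete $b$; then $a$ becomes a leaf, so $n_1$ is unchanged, $i = 1$ and $\Delta\ell_\squ(1) = 0$, giving the contradiction. So assume from now on that no leaf is adjacent to a degree-$2$ vertex. Next I would pick a longest path $(u_0, u_1, \ldots, u_m)$ of $T$; since $T$ is not a star (a star has at most $5$ cells), $m \geq 3$. Maximality of the path forces every neighbour of $u_1$ other than $u_2$ to be a leaf, so $u_1$ is an inner vertex all of whose neighbours but $u_2$ are leaves, and the standing assumption gives $\deg_T(u_1) \in \{3,4\}$.

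The core of the argument is then a short case analysis on $\deg_T(u_1)$ and $\deg_T(u_2)$, which I expect to be the main obstacle, namely proving that the five configurations of Figure~\ref{F:2d-case}(e)--(i) are exhaustive. The decisive tool is the degree constraint $\deg_T(u) + \deg_T(v) \leq 6$ for adjacent cells, together with path maximality. If $\deg_T(u_1) = 4$, the constraint forces $\deg_T(u_2) = 2$; deleting $u_1$ and its three leaves (case~(h)) turns $u_2$ into a leaf, so $n_1$ drops by $2 = \Delta\ell_\squ(4)$ with $i = 4$. If $\deg_T(u_1) = 3$ (a fork) then $\deg_T(u_2) \leq 3$. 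When $\deg_T(u_2) = 2$ I delete the fork $u_1$ and its two leaves (case~(i)): $u_2$ becomes a leaf, $n_1$ drops by $1 = \Delta\ell_\squ(3)$ with $i = 3$. When $\deg_T(u_2) = 3$, maximality of the path bounds by $2$ the distance from $u_2$ to the farthest leaf in its third branch (the one avoiding $u_1$ and $u_3$), so the root $z$ of that branch is either a leaf or, by the standing assumption and the degree bound, itself a fork; these are exactly cases~(g) and~(f). In case~(g) I delete $u_1$, its two leaves and the leaf $z$, turning $u_2$ into a leaf; in case~(f) I delete the four leaves hanging from the two forks $u_1, z$, turning both into leaves. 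In either case $i = 4$ and $n_1$ drops by $2 = \Delta\ell_\squ(4)$.

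It remains to check the two routine points common to all cases: that $T'$ is again a tree-like polyomino --- which holds because each deletion removes a pendant star or a set of leaves, so that the remaining cells still induce a connected subtree of the lattice --- and that the net loss of leaves is exactly $\Delta\ell_\squ(i)$, which is the bookkeeping carried out case by case above. Since $n \geq 6$ guarantees $n(T') \geq 2$, Lemma~\ref{L:24} applies to $T'$ and is contradicted, so no counterexample exists and $L_\squ(n) \leq \ell_\squ(n)$ for all $n \geq 2$.
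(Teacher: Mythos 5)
Your proof is correct and follows essentially the same route as the paper: a minimal counterexample is reduced to one of the five local configurations of Figure~\ref{F:2d-case}(e)--(i), each of which is eliminated by the same cell deletions together with Lemma~\ref{L:24}. The only cosmetic difference is that you locate these configurations via a longest path and the bound $\deg_T(u)+\deg_T(v)\le 6$, whereas the paper argues via a vertex of depth $2$ and falls back on exhaustive inspection for $n\le 6$; your version makes the exhaustiveness of the case split a bit more explicit, but it is not a different argument.
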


\begin{proof}
  Suppose, by contradiction, that $T$ is a tree-like polyomino of minimal size such that $n_1(T) > \ell_\squ(n(T))$.
  We first show that all vertices of  $T$ of depth $1$ have degree $3$ or $4$.
  Arguing by contradiction, assume that there exists a vertex $u_1$ of $T$ such that $\depth_T(u_1) = 1$ and $\deg_T(u_1) = 2$.
  Let $T'$ be the tree-like polyomino obtained from $T$ by removing the leaf adjacent to $u_1$ (see Figure~\ref{F:2d-case}(e)).
  Then $n(T') = n(T) - 1$ and $n_1(T') = n_1(T)$, contradicting Lemma~\ref{L:24}.

  Now, we show that $T$ cannot have a vertex of depth $2$.
  Again by contradiction, assume that such a vertex $u_2$ exists.
  Clearly, $\deg_T(u_2) \neq 4$, otherwise $u_2$ would have a neighbor of depth $1$ and degree $2$, which was just  shown to be impossible.
  If $\deg_T(u_2) = 3$, then we are either in case (f) or (g) of Figure~\ref{F:2d-case}.
  In each case, let $T'$ be the tree-like polyomino obtained by removing the four gray cells.
  Then $n(T') = n(T) - 4$ and $n_1(T') = n_1(T) - 2$, contradicting Lemma~\ref{L:24}.
  Finally, if $\deg_T(u_2) = 2$, then either (h) or (i) of Figure~\ref{F:2d-case} holds, leading to a contradiction with Lemma~\ref{L:24} when removing the gray cells.
  Since every tree-like polyomino of size larger than $6$ has at least one vertex of depth $2$, the proof is completed by exhaustive inspection of all tree-like polyominoes of size at most $6$.
\end{proof}

Combining Lemmas~\ref{L:family-2d} and \ref{L:upper-2d}, we have proved the following result.
\begin{theorem}\label{T:2d}
  For all integers $n\geq 2$, $L_\squ(n) = \ell_\squ(n)$ and the asymptotic growth of $L_\squ$ is given by $L_\squ(n) \sim \frac{1}{2}\cdot n$.
\end{theorem}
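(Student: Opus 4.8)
The plan is to split the theorem into its two assertions and dispatch each separately, leaning on the two lemmas already in place. For the identity $L_\squ(n) = \ell_\squ(n)$, the work is essentially finished: Lemma~\ref{L:family-2d} supplies the lower bound $L_\squ(n) \geq \ell_\squ(n)$ by exhibiting the explicit family $\{T_n\}$, while Lemma~\ref{L:upper-2d} supplies the matching upper bound $L_\squ(n) \leq \ell_\squ(n)$ through the minimal-counterexample argument. Combining the two inequalities yields the equality for every $n \geq 2$, so the first half of the theorem requires no new idea beyond observing that the two bounds coincide.

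For the asymptotic statement I would first extract a closed form from the recursion~\eqref{defl2}. Writing any $n \geq 2$ as $n = 4q + s$ with $s \in \{2,3,4,5\}$ and $q \geq 0$, iterating $\ell_\squ(n) = \ell_\squ(n-4) + 2$ down to the base range gives $\ell_\squ(n) = \ell_\squ(s) + 2q = \tfrac{n}{2} + \bigl(\ell_\squ(s) - \tfrac{s}{2}\bigr)$. Since $\ell_\squ(s) - \tfrac{s}{2}$ takes only the bounded values $1, \tfrac12, 1, \tfrac32$ for $s = 2,3,4,5$ respectively, this shows $\ell_\squ(n) = \tfrac{n}{2} + O(1)$. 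Dividing by $n$ and letting $n \to \infty$ then gives $\ell_\squ(n)/n \to \tfrac12$, so by the identity just proved $L_\squ(n) = \ell_\squ(n) \sim \tfrac12 n$.

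There is no genuine obstacle at this stage: the substantive content lives entirely in Lemmas~\ref{L:family-2d} and~\ref{L:upper-2d}, and the theorem merely assembles them and reads off the growth rate. The only point that demands mild care is the bookkeeping in the asymptotic step, namely checking that the additive constant $\ell_\squ(s) - \tfrac{s}{2}$ remains bounded uniformly across all four residue classes modulo $4$, so that it is safely absorbed into the $O(1)$ error term; the explicit decomposition $n = 4q + s$ makes this transparent. One could equally argue by telescoping, noting that $\ell_\squ(n) - \ell_\squ(n - 4k) = 2k$ whenever the index stays above the base range, which already forces the average increment of roughly $\tfrac12$ per cell and hence the claimed equivalence.
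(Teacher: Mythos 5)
Your proposal is correct and follows the paper's route exactly: the paper proves the theorem simply by combining Lemmas~\ref{L:family-2d} and \ref{L:upper-2d}, leaving the asymptotic claim as an immediate consequence of the recurrence. Your explicit verification that $\ell_\squ(4q+s) = 2q + \ell_\squ(s) = \tfrac{n}{2} + O(1)$ is a correct spelling-out of the step the paper leaves implicit.
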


%
%
%
%

\section{Fully Leafed Tree-Like Polyhexes and Polyiamonds}
\label{S:polyhexes-polyiamonds}

In the hexagonal and triangular lattices, the other two regular lattices of the plane, the leaf functions for tree-like polyforms are easy to compute.

We first consider the hexagonal lattice \emph{Hex} in which each cell is a regular hexagon of radius $1$.
If $p \in \R^2$ is the center of a hexagonal cell, then the center of its $6$ neighbors are
$$p + \vv{v_\theta}, \quad \theta = \frac{k\cdot\pi}3, k = 0, 1, 2, 3, 4, 5,$$
where $\vv{v_\theta}$ is the vector of norm $\sqrt{3}$ in the direction $\theta$. 
This neighborhood defines a \emph{$6$-adjacency relation} in $Hex$.
Connected sets of hexagonal cells under this relation are called \emph{polyhexes} and we respectively denote by $Hex_t(n)$ and $Hex_i(n)$ the sets of fixed and free tree-like polyhexes of size $n$.
The three lines supporting the vectors $\vv{v_\theta}$ are called the axes of $Hex$. 

We show next that  the function $\ell_{\hex}$ defined by
\begin{align}\label{recurr_hex}
  \ell_{\hex}(n) =&
  \begin{cases}
    2,                      & \mbox{if $n = 2, 3$;} \\
    \ell_{\hex}(n - 2) + 1, & \mbox{if $n \geq 4$.} \\
  \end{cases}\\ \nonumber
  =& \left\lfloor\frac{n}{2} \right\rfloor+1
\end{align}
gives the number of leaves in fixed fully leafed tree-like polyhexes.

\begin{theorem}\label{leaf_function_hex}
  For all integers $n \geq 2$, $\ell_{\hex}(n) = L_{\hex}(n)$.
\end{theorem}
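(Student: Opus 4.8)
The plan is to mirror the two-sided strategy just used for polyominoes in Theorem~\ref{T:2d}: first establish the lower bound $L_{\hex}(n) \geq \ell_{\hex}(n)$ by an explicit construction, then establish the matching upper bound $L_{\hex}(n) \leq \ell_{\hex}(n)$ by a minimal counterexample argument. The key structural fact making the hexagonal case simpler is the degree constraint already noted in Section~\ref{S:prelem}: if $u,v$ are adjacent in a tree-like polyform then $\deg_T(u)+\deg_T(v)\leq 6$. Since each hexagonal cell has only $6$ neighbors, an inner vertex of high degree severely restricts its neighbors' degrees, and this is what forces the leaf count to grow only like $n/2$ rather than faster.

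For the lower bound, first I would exhibit a family $\{H_n \mid n \geq 2\}$ realizing $\ell_{\hex}$. Following the recurrence $\ell_{\hex}(n) = \ell_{\hex}(n-2)+1$, the natural construction grafts a small gadget of $2$ cells that adds exactly one net leaf at each step: starting from a base configuration of size $2$ or $3$ with $2$ leaves, I append a two-cell ``elbow'' or ``arm'' that converts the previous terminal leaf into an inner vertex while contributing two new cells, one of which is a fresh leaf — a net gain of $+1$ leaf for $+2$ cells. An induction on $n$, splitting into the parities via the base cases $n=2,3$, then yields $n_1(H_n) = \lfloor n/2\rfloor + 1 = \ell_{\hex}(n)$, exactly as in the proof of Lemma~\ref{L:family-2d}.

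For the upper bound, I would argue by contradiction with a tree-like polyhex $T$ of minimal size satisfying $n_1(T) > \ell_{\hex}(n(T))$, and prove an analogue of Lemma~\ref{L:24} first: if $n(T') = n(T)-i$ for a suitable step size $i$ (here $i=2$, matching the recurrence), then $n_1(T) > n_1(T') + \Delta\ell_{\hex}(i)$ with $\Delta\ell_{\hex}(2)=1$, using the monotonicity estimate $\ell_{\hex}(k+2) \geq \ell_{\hex}(k)+1$ together with minimality of $n(T)$. With this lemma in hand, I would perform a case analysis on the deepest vertices of $T$. As in the polyomino proof, I would show that no vertex of depth $1$ can have degree $2$ (otherwise deleting its pendant leaf gives $T'$ of size $n-2$ with $n_1(T')=n_1(T)$ after also removing the now-degree-1 predecessor, or simply $n-1$ with the degree accounting violating the lemma), and then rule out deeper structure by pruning a two-cell block around a depth-$2$ vertex, checking in each admissible local configuration (controlled by $\deg_T(u)+\deg_T(v)\leq 6$) that deleting $2$ cells removes at most $1$ leaf, contradicting the lemma.

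The main obstacle I anticipate is the case enumeration in the upper bound: unlike the square lattice, where a cell has $4$ neighbors and the local pictures are few, the hexagonal cell has $6$ neighbors and there is more freedom in how branches attach at a deep vertex. The work is to verify that, in \emph{every} legal attachment of a minimal pruneable gadget around the deepest inner vertex, removing $2$ cells costs at most one leaf; the degree bound $\deg_T(u)+\deg_T(v)\leq 6$ is the essential tool that keeps this enumeration finite and, crucially, prevents configurations where two cells could carry more than one leaf between them. Once that local estimate is pinned down, the contradiction with the analogue of Lemma~\ref{L:24} closes the argument, and combining the two bounds gives $\ell_{\hex}(n)=L_{\hex}(n)$; the closed form $\lfloor n/2\rfloor+1$ then follows immediately by unfolding the recurrence.
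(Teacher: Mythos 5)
Your two-sided strategy is the same as the paper's, and your lower bound (a linear caterpillar grown two cells at a time, adding one net leaf per step) is essentially the family exhibited in Figure~\ref{F:polyhex-even}. The gap is in the upper bound: you lean on the polyomino constraint $\deg_T(u)+\deg_T(v)\leq 6$ as your ``essential tool,'' but that is not the operative fact in the hexagonal lattice, and your pruning scheme does not close without the correct one. What you actually need is a \emph{pointwise} bound: every vertex of a tree-like polyhex has degree at most $3$. This holds because any two adjacent hexagonal cells share two common neighbours, so the tree-neighbours of a given cell must form an independent set in the $6$-cycle of its neighbourhood, which has at most $3$ elements. Your framing (``an inner vertex of high degree severely restricts its neighbors' degrees'') tacitly allows high-degree inner vertices; if a depth-$1$ vertex of degree $d\geq 4$ with $d-1$ pendant leaves were possible, pruning those leaves would delete $d-1$ cells and $d-2$ leaves net, and the inequality you would need, $\Delta\ell_{\hex}(d-1)\geq d-2$, i.e.\ $\lfloor (d-1)/2\rfloor\geq d-2$, fails for every $d\geq 4$. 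So without first establishing the degree-$3$ bound, your case analysis cannot reach a contradiction.

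Once $\deg_T(u)\leq 3$ is in place, the proof is much shorter than what you sketch and requires no analysis at depth $2$ and no forbidden-gadget enumeration. In a minimal counterexample $T$ of size $n\geq 3$, pick any depth-$1$ vertex $u$; then $\deg_T(u)\in\{2,3\}$. If $\deg_T(u)=2$, delete its pendant leaf: $u$ becomes a leaf, so $n_1(T')=n_1(T)>\ell_{\hex}(n(T))\geq\ell_{\hex}(n(T'))$, contradicting minimality. If $\deg_T(u)=3$, then (for $n\geq 5$) exactly two of its neighbours are leaves; delete both, so that $n(T')=n(T)-2$ and $n_1(T')=n_1(T)-1>\ell_{\hex}(n(T))-1=\ell_{\hex}(n(T'))$, again contradicting minimality since $\ell_{\hex}(k+2)=\ell_{\hex}(k)+1$. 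This is exactly the paper's argument, and it is where the hexagonal case is genuinely simpler than the square case of Lemma~\ref{L:upper-2d} --- not because of the sum constraint you cite, but because the acyclicity of the polyhex caps every degree at $3$.
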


\begin{proof}
  We first prove that $L_{\hex}(n) \geq \ell_{\hex}(n)$.
  We exhibit a family of polyhexes that satisfies recurrence \eqref{recurr_hex}.
  For $n$ even, Figure \ref{F:polyhex-even}$(b)$ shows a sample of a fully leafed polyhex that contains an even number of cells and satisfies \eqref{recurr_hex}.
  This polyhex can easily be modified to contain an arbitrary number $k$ of cells of degree three, no cell of degree two and $k+2$ cells of degree one for a total of $2k+2$ cells.
  For $n=2k+1$ odd, we only have to remove one leaf from the previous polyhex with $k$ cells of degree $3$ and $k+2$ leaves (see Figure \ref{F:polyhex-even}$(c)$) in order to satisfy \eqref{recurr_hex}. 

  It remains to show that $L_{\hex}(n) \leq \ell_{\hex}(n)$.
  Arguing by contradiction, assume that there exists a tree-like polyhex $T$ of minimal size $n\geq 3$ such that $n_1(T) > \ell_{\hex}(n(T))$.
  Every polyhex of size $n\geq 3$ contain at least one cell of depth one.
  Let $u$ be a vertex of $T$ of depth $1$.
  Notice that $\deg_T(u) \in \{2,3\}$.
  Assume first that $\deg_T(u) = 2$ and let $T'$ be the tree-like polyhex obtained from $T$ by removing the leaf adjacent to $u$.
  Then $n_1(T') = n_1(T) > \ell_{\hex}(n(T)) \geq \ell_{\hex}(n(T'))$, contradicting the minimality of $n(T)$.
  Finally, assume that $\deg_T(u) = 3$ and let $T'$ be the tree-like polyhex obtained from $T$ by removing the two leaves adjacent to $u$.
  Then $n_1(T') = n_1(T) - 1 > \ell_{\hex}(n(T)) - 1 = \ell_{\hex}(n(T'))$, contradicting the minimality of $n(T)$.
\end{proof}
\begin{figure}
  \centering
  \begin{tikzpicture}[scale=.7]
  \newcommand\hexintcell[2]{\hexcell{#1}{#2}{coldeg3}}
  \newcommand\hexleafcell[2]{\hexcell{#1}{#2}{coldeg1}}
   \newcommand\hexdegtwocell[2]{\hexcell{#1}{#2}{coldeg2}}
  \newcommand\hexcell[3]{\begin{scope}[scale=0.3, xshift=#1 cm * 2 * 0.86602540378 + 0.86602540378 * 2 * 0.5 * #2 cm, yshift=1.5 * #2 cm] \draw[fill=#3] (30:1cm) -- (90:1cm) -- (150:1cm) -- (210:1cm) -- (270:1cm) -- (330:1cm) -- cycle; \end{scope}}
  \newcommand\hexdots[2]{\begin{scope}[scale=0.3, xshift=#1 cm * 2 * 0.86602540378 + 0.86602540378 * 2 * 0.5 * #2 cm, yshift=1.5 * #2 cm] \draw[draw, very thin] (30:1cm) -- (90:1cm) -- (150:1cm) -- (210:1cm) -- (270:1cm) -- (330:1cm) -- cycle; \node at (0,0) {$\ldots$}; \end{scope}}
  \begin{scope}
    \hexintcell{-1}{0}
    \hexintcell{0}{0}
    \hexintcell{0}{1}
    \hexintcell{1}{-1}
    \hexleafcell{-2}{1}
    \hexleafcell{-1}{-1}
    \hexleafcell{-1}{2}
    \hexleafcell{1}{1}
    \hexleafcell{2}{-1}
    \hexleafcell{1}{-2}
    \node at (0,-2.5) {(a)};
  \end{scope}

  \begin{scope}[xshift=3.5cm]
    \hexintcell{-1}{0}
    \hexintcell{0}{0}
    \hexintcell{1}{-1}
    \hexintcell{2}{-1}
    \hexleafcell{-2}{1}
    \hexleafcell{-1}{-1}
    \hexleafcell{0}{1}
    \hexleafcell{1}{-2}
    \hexleafcell{2}{0}
     \hexintcell{3}{-2}

    \hexintcell{4}{-2}
    \hexintcell{5}{-3}
    \hexleafcell{4}{-1}
    \hexleafcell{6}{-3}
    \hexleafcell{5}{-4}
    \hexleafcell{3}{-3}
    \node at (0,-2.5) {(b) };
  \end{scope}
  
  \begin{scope}[xshift=8cm]
    \hexintcell{-1}{0}
    \hexintcell{0}{0}
    \hexintcell{1}{-1}
    \hexdegtwocell{2}{-1}
    \hexleafcell{-2}{1}
    \hexleafcell{-1}{-1}
    \hexleafcell{0}{1}
    \hexleafcell{1}{-2}
     \hexintcell{3}{-2}

    \hexintcell{4}{-2}
    \hexintcell{5}{-3}
    \hexleafcell{4}{-1}
    \hexleafcell{6}{-3}
    \hexleafcell{5}{-4}
    \hexleafcell{3}{-3}
    \node at (0,-2.5) {(c) };
  \end{scope}

\end{tikzpicture}
  \caption{Fully leafed tree-like polyhexes.}
  \label{F:polyhex-even}
\end{figure}

Being the dual graph of the hexagonal lattice, the triangular lattice, denoted $Triang$, presents similar properties.
Recall that the triangular lattice is the result of the tessellation of the plane with equilateral triangles.
We choose triangles of radius one with sides of length $\sqrt{3}$, one of which is horizontal, and where center to center distance between adjacent triangles is one.
If $c\in \R$ is the center of a triangular cell then the centers of its three adjacent triangles are 
$$c+\vv{v_\theta}, \quad \theta = 2k\pi / 3, k = 0, 1, 2$$
where $\vv{v_\theta}$ is the vector of length $1$ and direction $\theta$.
This defines a \emph{$3$-adjacency relation} in $Triang$ and connected sets of triangular cells under this relation are called \emph{polyiamonds}.

In the next theorem, the function $\ell_{Triang}(n)$ defined by the conditions 
\begin{align}
  \label{triang_rec}
  \ell_{Triang}(n)=&
  \begin{cases}
  2 & \mbox{if } n=2,3,\\
  \ell_{Triang}(n-2)+1 & \mbox{if } n\geq 4.
  \end{cases}\\
  \nonumber
  =&\left\lfloor\frac{n}{2}\right\rfloor+1
\end{align}
is proved to be the leaf function of fixed tree-like polyiamonds.
\begin{theorem}
For all integers $n\geq 2$, we have 
\begin{align*}
\ell_{Triang}(n)=L_{Triang}(n).
\end{align*}
\end{theorem}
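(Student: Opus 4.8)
The plan is to mirror exactly the two-sided argument already used for the hexagonal lattice in Theorem~\ref{leaf_function_hex}, since the triangular lattice is the dual of the hexagonal one and the recurrence \eqref{triang_rec} is identical to \eqref{recurr_hex}. First I would establish the lower bound $L_{Triang}(n) \geq \ell_{Triang}(n)$ by exhibiting an explicit family of tree-like polyiamonds realizing the recurrence. The key structural fact is that in the triangular lattice each cell has only $3$ neighbors, so every inner vertex of a tree-like polyiamond has degree at most $3$; a cell of degree $3$ contributes at most two leaves. Thus I would build a caterpillar-like family in which a chain of degree-$3$ inner cells each carries two pendant leaves, together with the two leaves at the ends of the chain. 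Concretely, a polyiamond with $k$ inner cells of degree $3$ and $k+2$ leaves has size $2k+2$ and satisfies $n_1 = k+2 = \lfloor n/2\rfloor + 1$; removing one leaf handles the odd case $n = 2k+1$. This matches $\ell_{Triang}$, which is the content of the lower bound.

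For the upper bound $L_{Triang}(n) \leq \ell_{Triang}(n)$, I would argue by contradiction exactly as in the polyhex proof. Assume $T$ is a tree-like polyiamond of minimal size $n \geq 3$ with $n_1(T) > \ell_{Triang}(n(T))$. Every tree-like polyiamond of size at least $3$ contains a cell of depth $1$; let $u$ be such a vertex. Since each cell has at most $3$ neighbors, $\deg_T(u) \in \{2,3\}$. If $\deg_T(u) = 2$, remove the single leaf adjacent to $u$ to obtain $T'$ with $n(T') = n(T) - 1$ and $n_1(T') = n_1(T)$; then
\begin{equation*}
  n_1(T') = n_1(T) > \ell_{Triang}(n(T)) \geq \ell_{Triang}(n(T')),
\end{equation*}
contradicting minimality. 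If $\deg_T(u) = 3$, remove the two leaves adjacent to $u$ to obtain $T'$ with $n(T') = n(T) - 2$ and $n_1(T') = n_1(T) - 1$; then
\begin{equation*}
  n_1(T') = n_1(T) - 1 > \ell_{Triang}(n(T)) - 1 = \ell_{Triang}(n(T')),
\end{equation*}
again a contradiction, using the recurrence $\ell_{Triang}(n) = \ell_{Triang}(n-2) + 1$. Combining the two bounds yields the equality.

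I do not expect any genuine obstacle, since the adjacency constraint ($3$-adjacency, hence $\deg_T(u) \leq 3$ for every vertex) is the only structural property the argument uses, and it holds verbatim in $Triang$. The one point deserving care is the base-case verification: I would confirm directly that $\ell_{Triang}(n) = L_{Triang}(n)$ for the small sizes $n = 2, 3$ that anchor the recurrence and seed the contradiction argument, and I would note explicitly that a depth-$1$ cell always exists for $n \geq 3$ (a tree of size $\geq 3$ cannot consist solely of leaves, and pruning the leaves leaves a nonempty subtree whose leaves have depth $1$ in $T$). With those routine checks in place, the proof is essentially a transcription of the polyhex case.
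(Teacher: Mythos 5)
Your proposal is correct and follows essentially the same route as the paper, which itself simply defers to the polyhex argument of Theorem~\ref{leaf_function_hex}: a lower bound via an explicit caterpillar family and an upper bound via a minimal counterexample with a depth-one vertex of degree $2$ or $3$. One small slip in your verbal description of the family: in a chain of degree-$3$ inner cells only the two end cells carry two pendant leaves (the interior ones carry one, having two chain neighbours), which is exactly what your subsequent, correct count of $k+2$ leaves on $2k+2$ cells reflects.
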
 
\begin{proof}
As in theorem \ref{leaf_function_hex} we first prove that $L_{Triang}(n)\geq \ell_{Triang}(n)$ by exhibiting a family of fixed polyiamonds satisfying recurrence \ref{triang_rec}.
We skip the details which are very similar to those in the proof of theorem \ref{leaf_function_hex}.
We then show by contradiction that $L_{Triang}\leq \ell_{Triang}$ with an argument identical to the one in theorem \ref{leaf_function_hex}.
\end{proof}
\begin{figure}
  \centering
  \begin{tikzpicture}
  \newcommand\tricell[4]{
    \begin{scope}[
      scale = 0.3, 
      xshift = #1 cm * 0.866,
      yshift = 1.5 * #2 cm - (#3 - 1) * 0.25 cm,
    ] 
      \draw[fill=#4] (-30*#3:1cm) -- (90*#3:1cm) -- (210*#3:1cm) -- cycle; 
    \end{scope}
  }
  \newcommand\triupleaf[2]{
    \tricell#1#21{coldeg1}
  }
  \newcommand\tridownleaf[2]{
    \tricell#1#2{-1}{coldeg1}
  }
  \newcommand\triupint[2]{
    \tricell#1#21{coldeg3}
  }
  \newcommand\tridowntwo[2]{
    \tricell#1#2{-1}{coldeg2}
  }
  \newcommand\tridownint[2]{
    \tricell#1#2{-1}{coldeg3}
  }

  \begin{scope}[xshift=-2cm]
    \triupleaf{1}{1}
    \tridownleaf{2}{1}
    \node () at (0.3,-1) {(a)};
  \end{scope}

  \begin{scope}
    \triupleaf{1}{1}
    \tridownint{2}{1}
    \triupleaf{2}{2}
    \triupint{3}{1}
    \tridownleaf{3}{0}
    \tridownint{4}{1}
    \triupleaf{4}{2}
    \triupint{5}{1}
    \tridownleaf{5}{0}
    \tridownint{6}{1}
    \triupleaf{6}{2}
    \triupleaf{7}{1}
    \node () at (1.1,-1) {(b)};
  \end{scope}

  \begin{scope}[xshift=3cm]
    \triupleaf{1}{1}
    \tridownint{2}{1}
    \triupleaf{2}{2}
    \triupint{3}{1}
    \tridownleaf{3}{0}
    \tridownint{4}{1}
    \triupleaf{4}{2}
    \triupint{5}{1}
    \tridownleaf{5}{0}
    \tridowntwo{6}{1}
    \triupleaf{6}{2}
    \node () at (1.1,-1) {(c)};
  \end{scope}
  \begin{scope}[xshift=6cm]
    \triupleaf{1}{0}
    \triupleaf{3}{0}
    \triupleaf{0}{1}
    \triupint{2}{1}
    \triupleaf{4}{1}
    \triupleaf{1}{2}
    \triupleaf{3}{2}
    \tridownint{1}{1}
    \tridownint{3}{1}
    \tridownint{2}{0}
    \node () at (0.6,-1) {(d)};
  \end{scope}

\end{tikzpicture}
  \caption{Fully leafed tree-like polyiamonds.}
  \label{F:polyiamonds-even}
\end{figure}
\section{Fully Leafed Tree-Like Polycubes}\label{S:polycubes}

The basic concepts introduced in Section~\ref{S:polyominoes} are now extended to tree-like polycubes with additional considerations that complexify the arguments.
Recall that for all integers $n \geq 2$, 
$$L_\cub(n) = \max\{n_1(T) \mid \mbox{$T$ is a tree-like polycube of size $n$} \}.$$
A naive tentative to extrapolate the ratio  $L_\squ(n)/n$ from polyominoes to polycubes leads to the ratio $L_\cub(n)/n=4/6$ as $n$ tends to infinity. In this section, we show that this first guess is false and that the optimal ratio is actually $28/41$ and we exhibit the geometric objects that carry this unexpected ratio.  

Define the function $\ell_\cub(n)$ as follows:

\begin{equation}\label{lcub}
\ell_\cub(n) = \begin{cases}
  f_\cub(n) + 1,        & \mbox{if $n = 6, 7, 13, 19, 25$;} \\
  f_\cub(n),            & \mbox{if $2 \leq n \leq 40$ and $n \neq 6, 7, 13, 19, 25$;} \\
  f_\cub(n-41) + 28,    & \mbox{if $41 \leq n \leq 81$;} \\
  \ell_\cub(n-41) + 28, & \mbox{if $n\geq 82$.}
\end{cases}
\end{equation}
\begin{equation}\label{fcub}
\mbox{where}\qquad\quad  f_\cub(n) = \begin{cases}
  \lfloor (2n+2) / 3 \rfloor, & \mbox{if $0 \leq n \leq 11$;}  \\ 
  \lfloor (2n+3) / 3 \rfloor, & \mbox{if $12 \leq n \leq 27$;} \\
  \lfloor (2n+4) / 3 \rfloor, & \mbox{if $28 \leq n \leq 40$.}
\end{cases}
\end{equation}

The following key observations on $\ell_\cub$ prove to be useful.
\begin{proposition}\label{P:l3}
The function $\ell_\cub$ satisfies the following properties:
\begin{enum}
  \item \label{prop:delta1}
  For all positive integers $k$, the sequence $(\ell_\cub(n+k) - \ell_\cub(n))_{n \geq 0}$ is bounded, so that the function $\Delta \ell_\cub : \N \rightarrow \N$ defined by
  $$\Delta \ell_\cub(i) = \liminf_{n\rightarrow \infty} (\ell_\cub(n+i)-\ell_\cub(n))$$
  is well-defined.
  \item  \label{prop:delta2}
  For any positive integers $n$ and $k$, if $\ell_\cub(n + k) - \ell_\cub(n) < \Delta \ell_\cub(k)$, then $n \in \{6, 7, 13, 19, 25\}$.
\end{enum}
\end{proposition}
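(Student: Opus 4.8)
The plan is to decompose $\ell_\cub$ into a ``clean'' quasi-periodic part and a correction supported on finitely many points. Define $F : \N \to \N$ by $F(n) = f_\cub(r) + 28m$, where $n = 41m + r$ with $m \geq 0$ and $0 \le r \le 40$, and set $\epsilon(n) = 1$ if $n \in \{6,7,13,19,25\}$ and $\epsilon(n) = 0$ otherwise. The crucial structural observation --- the step I expect to carry the real content --- is that the bonuses $+1$ occurring at $n \in \{6,7,13,19,25\}$ are \emph{not} propagated by the period-$41$ recurrence, because the definition \eqref{lcub} on the block $41 \le n \le 81$ refers to $f_\cub$ directly rather than to $\ell_\cub$. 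Granting this, one reads off from \eqref{lcub} and \eqref{fcub} that $\ell_\cub(n) = F(n) + \epsilon(n)$ for every $n$, and that $F$ satisfies the \emph{exact} relation $F(n+41) = F(n) + 28$ for all $n \geq 0$.

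For part~\ref{prop:delta1}, set $G_k(n) = F(n+k) - F(n)$. From $F(n+41) = F(n) + 28$ it follows at once that $G_k(n+41) = G_k(n)$, so $G_k$ is periodic of period $41$ and hence takes only the finitely many values $G_k(0), \dots, G_k(40)$. Since $\epsilon(n) = \epsilon(n+k) = 0$ as soon as $n > 25$, we have $\ell_\cub(n+k) - \ell_\cub(n) = G_k(n)$ for all $n > 25$; the sequence in~\ref{prop:delta1} thus differs from the bounded periodic sequence $G_k$ in only finitely many terms, which proves boundedness and gives $\Delta \ell_\cub(k) = \min_{0 \le r \le 40} G_k(r)$. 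That this minimum is a nonnegative integer (so that $\Delta \ell_\cub$ indeed maps into $\N$) reduces to checking that $f_\cub$ is nondecreasing on $\{0, \dots, 40\}$ with $f_\cub(40) = 28$; this is a routine verification of \eqref{fcub} at the two internal breakpoints $n = 11,12$ and $n = 27,28$, whence $F$ is nondecreasing and $G_k \geq 0$.

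For part~\ref{prop:delta2}, suppose $n \geq 1$ satisfies $\ell_\cub(n+k) - \ell_\cub(n) < \Delta \ell_\cub(k)$, and assume toward a contradiction that $n \notin \{6,7,13,19,25\}$. Then $\epsilon(n) = 0$, so $\ell_\cub(n) = F(n)$, while $\ell_\cub(n+k) = F(n+k) + \epsilon(n+k) \geq F(n+k)$ always holds. Hence $\ell_\cub(n+k) - \ell_\cub(n) \geq F(n+k) - F(n) = G_k(n) \geq \min_{0 \le r \le 40} G_k(r) = \Delta \ell_\cub(k)$, where the last inequality uses that periodicity places $G_k(n) = G_k(n \bmod 41)$ among the values over a single period. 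This contradicts the hypothesis, so $n \in \{6,7,13,19,25\}$, as required.

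Almost everything here is bookkeeping once the decomposition $\ell_\cub = F + \epsilon$ is in hand; the one genuinely delicate point is the claim that the bonus terms do not recur with period $41$, which must be read carefully from the \emph{two distinct} recurrences in \eqref{lcub} (the one valid on $41 \le n \le 81$ versus the one valid for $n \geq 82$). It is precisely this feature that makes $\{6,7,13,19,25\}$ the \emph{only} places where $\ell_\cub(n+k) - \ell_\cub(n)$ can fall strictly below its liminf, and hence the reason the conclusion of~\ref{prop:delta2} singles out exactly this set.
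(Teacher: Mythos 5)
Your proof is correct and takes essentially the same route as the paper, whose own proof simply declares part (i) bounded and part (ii) immediate from \eqref{lcub} and \eqref{fcub}. Your decomposition $\ell_\cub = F + \epsilon$, with $F$ satisfying $F(n+41)=F(n)+28$ exactly and $\epsilon$ supported on $\{6,7,13,19,25\}$ precisely because the $41\le n\le 81$ clause refers to $f_\cub$ rather than to $\ell_\cub$, is a correct and considerably more explicit way of carrying out that verification.
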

\begin{proof}
 \ref{prop:delta1}. It is immediate that the sequence $(\ell_\cub(n+k) - \ell_\cub(n))_{n \geq 0}$ is bounded by $k$. 
 \ref{prop:delta2} is immediate from \eqref{lcub} and \eqref{fcub}. 
\end{proof}

We now introduce rooted tree-like polycubes.
\begin{definition}\label{D:rooted-polycube}
A \emph{rooted grounded tree-like polycube} is a triple $R = (T, r, \vv{u})$ such that
\begin{enum}
  \item $T = (V,E)$ is a grounded tree-like polycube of size at least $2$;
  \item $r \in V$, called the \emph{root} of $R$, is a  cell adjacent to at least one leaf of $T$;
  \item $\vv{u} \in  \Z^3$, called the \emph{direction} of $R$, is a unit vector such that $r + \vv{u}$ is a leaf of $T$.
\end{enum}
\end{definition}

When the triple $R = (T, r, \vv{u})$ is such that $r + \vv{u}$ is not a leaf of $T$, we say that $R$ is final.
The \emph{height} of $R$ is the maximum length of a path from the root $r$ to some leaf. \emph{Rooted fixed tree-like polycubes} and \emph{rooted free tree-like polycubes} are defined similarly. If $R$ is a rooted, grounded or fixed, tree-like polycube, a unit vector $\vv{v} \in  \Z^3$ is called a \emph{free direction} of $R$ whenever $r - \vv{v}$ is a leaf of $T$.
In particular, $-\vv{u}$ is a free direction of $R$.
A rooted grounded, fixed or free, tree-like polycube $R$ is called \emph{atomic} if its height is $1$.
The $11$ atomic rooted free tree-like polycubes are illustrated in Figure~\ref{arbre_basique}.

%

\begin{figure}
\begin{center}
\includegraphics[scale=.65]{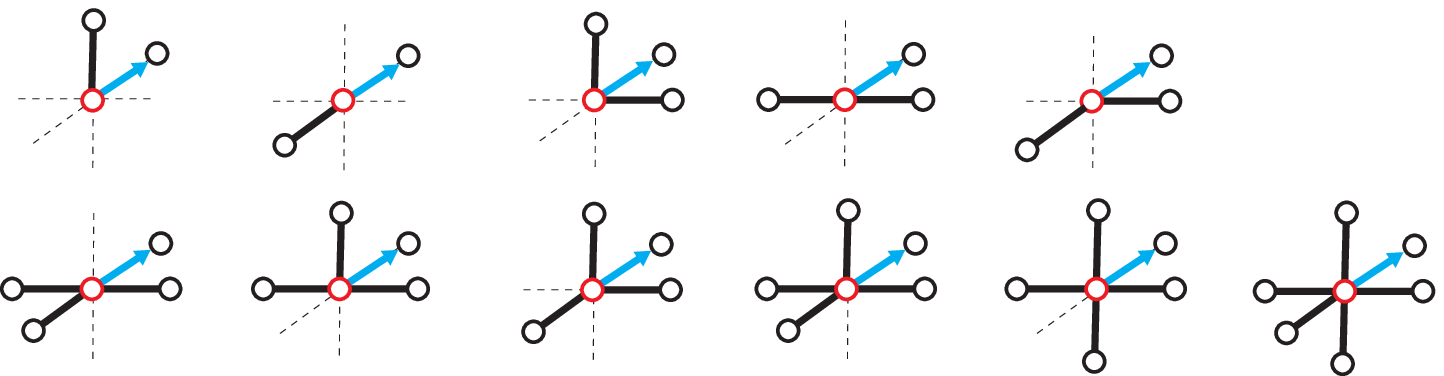}
\caption{Atomic tree-like polycubes  up to isometry} 
\label{arbre_basique}
\end{center}
\end{figure}
We now introduce an operation called the \emph{graft union} of tree-like polycubes.
\begin{definition}[Graft union]\label{D:graft}
Let $R = (T,r,\vv{u})$ and $R' = (T',r',\vv{u'})$ be rooted grounded tree-like  polycubes such that $\vv{u'}$ is a free direction of $R$. The \emph{graft union} of $R$ and $R'$, whenever it exists, is the rooted grounded tree-like  polycube
$$R \graft R' = (\Z_3[V \cup \tau(V')],r,\vv{u}),$$
where $V$, $V'$ are the sets of vertices of $T$, $T'$ respectively and $\tau$ is the translation with respect to the vector $\vv{r'r} - \vv{u'}$.
\end{definition}

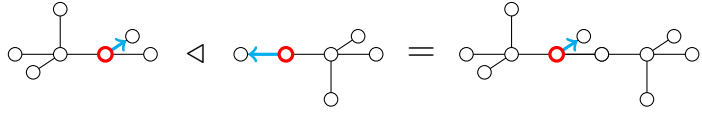
\begin{figure}[t]
\begin{center}
\begin{tikzpicture}[scale=.6, z={(-6mm,-4mm)}, sommet/.style={draw=black, fill=white, minimum size=1.8mm, circle, inner sep=0pt}, root/.style={sommet, draw=red, very thick}, fleche/.style={cyan, very thick, ->, shorten >=1mm}]
  \begin{scope}
  \draw (-2,0,0) -- (1,0,0);
  \draw (-1,0,0) -- (-1,1,0);
  \draw (-1,0,0) -- (-1,0,1);
  \draw (0,0,0)  -- (0,0,-1);
  \foreach \x/\y/\z in {0/0/0, -1/0/0, -1/1/0, -1/0/1, -2/0/0, 0/0/-1, 1/0/0} {
    \node[sommet] at (\x, \y, \z)   {};
  }
  \draw[fleche] (0,0,0) -- (0,0,-1);
  \node[root] at (0,0,0) {};
  \end{scope}

  \begin{scope}[xshift=2cm]
    \node at (0,0) {\Large$\triangleleft$};
  \end{scope}

  \begin{scope}[xshift=4cm]
  \draw (-1,0,0) -- (2,0,0);
  \draw (1,0,0) -- (1,0,-1);
  \draw (1,0,0) -- (1,-1,0);
  \foreach \x/\y/\z in {0/0/0, -1/0/0, 1/0/0, 1/-1/0, 1/0/-1, 2/0/0} {
    \node[sommet] at (\x, \y, \z)   {};
  }
  \draw[fleche] (0,0,0) -- (-1,0,0);
  \node[root] at (0,0,0) {};
  \end{scope}

  \begin{scope}[xshift=7cm]
    \node at (0,0) {\Large$=$};
  \end{scope}

  \begin{scope}[xshift=10cm]
  \draw (-2,0,0) -- (1,0,0);
  \draw (-1,0,0) -- (-1,1,0);
  \draw (-1,0,0) -- (-1,0,1);
  \draw (0,0,0)  -- (0,0,-1);
  \draw (0,0,0) -- (3,0,0);
  \draw (2,0,0) -- (2,0,-1);
  \draw (2,0,0) -- (2,-1,0);
  \foreach \x/\y/\z in {0/0/0, -1/0/0, -1/1/0, -1/0/1, -2/0/0, 0/0/-1, 1/0/0,
                        1/0/0, 0/0/0, 2/0/0, 2/-1/0, 2/0/-1, 3/0/0} {
    \node[sommet] at (\x, \y, \z)   {};
  }
  \draw[fleche] (0,0,0) -- (0,0,-1);
  \node[root] at (0,0,0) {};
  \end{scope}
\end{tikzpicture}
\caption{A well-defined, non-final graft union of two rooted grounded tree-like polycubes} 
\label{somarbre}
\end{center}
\end{figure}

The graft union is naturally extended to fixed and free tree-like  polycubes. In the latter case however, $R \graft R'$ is not a single rooted free tree-like  polycube, but rather the set of all possible graft unions obtained from an isometry.
Observe that  graft union is a partial application on rooted grounded tree-like  polycubes, i.e. the triple $(\Z_3[V \cup \tau(V')],r,\vv{u})$ is not always a rooted tree-like polycube. More precisely, the induced subgraph $\Z_3[V \cup \tau(V')]$ is always connected, but not always acyclic. Also, $r+\vv{u}$ needs not be a leaf.
Therefore, we say that a graft union $R \graft R'$ is
\begin{enum}
  \item \emph{non-final} if $R \graft R'$ is a rooted grounded tree-like  polycube;
  \item \emph{final} if the graph $G = \Z_3[V \cup \tau(V')]$ is a tree-like polycube, $\vv{u'} = -\vv{u}$ and $r+\vv{u}$ is not a leaf of $G$;
  \item \emph{well-defined} if it is either non-final or final;
  \item \emph{invalid} otherwise.
\end{enum}

Figure \ref{somarbre} illustrates a well-defined graft union of two rooted tree-like polycubes.  
The graft union interacts well with the functions $n(R)$ and $n_i(R)$ giving respectively the total number of cells and the number of cells of degree $i$ in $T$.
\begin{lemma}\label{L:graft}
Let $R_1$, $R_2$ be rooted grounded tree-like polycubes such that $R_1 \graft R_2$ is well-defined. Then
\begin{eqnarray*}
  n_1(R_1 \graft R_2) & = & n_1(R_1) + n_1(R_2) - 2, \\
  n_i(R_1 \graft R_2) & = & n_i(R_1) + n_i(R_2), \quad \mbox{for $i \geq 2$}; \\
  n(R_1 \graft R_2)   & = & n(R_1) + n(R_2) - 2.
\end{eqnarray*}
\end{lemma}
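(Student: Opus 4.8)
The plan is to compute each quantity directly from the definition of the graft union $R_1 \graft R_2 = (\Z_3[V \cup \tau(V')], r, \vv{u})$, tracking exactly how the vertices of $T_1$ and $T_2$ are identified and how their degrees change when the two polycubes are fused. The central fact to establish is that the translation $\tau$ with respect to $\vv{r'r} - \vv{u'}$ causes precisely \emph{two} cells to be identified: the root $r$ of $R_1$ with $\tau(r')$, and the leaf $r - \vv{u'}$ of $R_1$ (which exists because $\vv{u'}$ is a free direction of $R_1$) with $\tau(r' + \vv{u'})$ — which is the leaf of $R_2$ in its root direction. Once this overlap is pinned down, the vertex count is immediate: $n(R_1 \graft R_2) = n(R_1) + n(R_2) - 2$, since exactly two of the $n(R_1)+n(R_2)$ cells coincide.

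First I would verify the identification claim carefully, since it underpins everything. Writing $\tau(p') = p' + (\vv{r'r} - \vv{u'})$, one checks that $\tau(r') = r$ and $\tau(r' + \vv{u'}) = r - \vv{u'}$; by the definition of a rooted grounded polycube and of a free direction, $r' + \vv{u'}$ is a leaf of $T_2$ and $r - \vv{u'}$ is a leaf of $T_1$, so these two pairs are legitimately superimposed. I would then argue that no \emph{other} cells are identified — this is where well-definedness is used, since if further overlaps occurred the induced subgraph would fail to be acyclic (or the count would differ), placing us in the invalid case excluded by hypothesis. I expect this non-overlap verification to be the main obstacle: it requires knowing that the two polycubes meet only along the single grafting edge and do not collide elsewhere in $\Z^3$, which is exactly the geometric content hidden inside the word \emph{well-defined}.

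Next I would track the degrees. The two identified leaves of $T_1$ and $T_2$ each have degree $1$ in their own polycube, but in the union the cell $r - \vv{u'} = \tau(r'+\vv{u'})$ becomes an interior cell of the grafting edge: it is adjacent to $r$ on the $T_1$ side and to $\tau(r')=r$'s neighbour on the $T_2$ side, so its degree rises from $1$ to $2$. Dually, the identification of $r$ with $\tau(r')$ merges the two root cells into one whose neighbourhood is the union of the two original neighbourhoods. Counting leaf cells, we \emph{lose} the two former leaves (each becomes a cell of degree $\geq 2$ along the joined edge) from the total $n_1(R_1) + n_1(R_2)$, giving $n_1(R_1 \graft R_2) = n_1(R_1) + n_1(R_2) - 2$. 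For $i \geq 2$, no cell of degree $i$ in either original polycube is destroyed or created by the fusion — the only cells whose degrees change are the two leaves promoted to higher degree, and those were not counted in $n_i$ for $i \geq 2$ to begin with, while the newly promoted cells and the merged root are accounted for symmetrically — so $n_i(R_1 \graft R_2) = n_i(R_1) + n_i(R_2)$. I would close by remarking that the \emph{final} case is handled identically, the only difference being that $r + \vv{u}$ ceases to be a leaf, which affects neither the vertex count nor the degree bookkeeping, so all three identities hold uniformly for any well-defined graft union.
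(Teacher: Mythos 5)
Your overall strategy --- computing everything directly from Definition~\ref{D:graft} by locating the overlap of $V$ and $\tau(V')$ and then doing degree bookkeeping --- is exactly what the paper intends when it dismisses the lemma as ``an immediate consequence of Definition~\ref{D:graft}'', and your identification of the non-overlap claim as the real content (it is what \emph{well-defined} buys you, via acyclicity: a third coincidence or any extra adjacency would produce two distinct paths between the same pair of cells) is on target. However, two concrete slips need fixing. First, the identification is reversed: since $\tau(p) = p + \vv{r'r} - \vv{u'}$, one gets $\tau(r') = r - \vv{u'}$ and $\tau(r' + \vv{u'}) = r$; that is, the \emph{root} of $R_2$ lands on the free leaf $r - \vv{u'}$ of $R_1$, and the leaf of $R_2$ in its root direction lands on the root $r$ of $R_1$ (compare Figure~\ref{somarbre}). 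The overlap is still the two-element set $\{r, r-\vv{u'}\}$, so the count $n(R_1\graft R_2)=n(R_1)+n(R_2)-2$ survives, but the degree analysis you build on the swapped identification does not.

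Second, and more seriously, the claim that the fused cell ``rises from degree $1$ to degree $2$'' is false in general and, taken literally, would contradict the very identity $n_2(R_1\graft R_2)=n_2(R_1)+n_2(R_2)$ you are proving, since it would manufacture an extra degree-$2$ cell (in the paper's constructions the grafted root often has degree $4$ or more, and in the degenerate case $n(R_2)=2$ the fused cell remains a leaf). The correct bookkeeping is: in the union, the cell $r-\vv{u'}=\tau(r')$ has degree $\deg_{T_2}(r')$, because its sole $T_1$-neighbour $r$ coincides with $\tau(r'+\vv{u'})$, which is already one of the $\tau(T_2)$-neighbours of $\tau(r')$; dually, the cell $r=\tau(r'+\vv{u'})$ has degree $\deg_{T_1}(r)$. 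Hence the degree census of $R_1\graft R_2$ is exactly the census of $T_1$ with the leaf $r-\vv{u'}$ deleted, together with the census of $T_2$ with the leaf $r'+\vv{u'}$ deleted: precisely two leaves disappear and every degree class $i\geq 2$ is additive, which yields all three identities at once. With these two corrections your argument is complete.
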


\begin{proof}
This is an immediate consequence of Definition~\ref{D:graft}.
\end{proof}

We are now ready to define a family of fully leafed tree-like polycubes.
\begin{lemma}\label{L:family-3d}
For all integer $k \geq 2$, $L_\cub(k) \geq \ell_\cub(k)$.
\end{lemma}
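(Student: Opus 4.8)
The plan is to mimic the proof of Lemma~\ref{L:family-2d}: I will exhibit an explicit family $\{T_n \mid n \geq 2\}$ of tree-like polycubes with $n_1(T_n) = \ell_\cub(n)$, so that $L_\cub(n) \geq n_1(T_n) = \ell_\cub(n)$. Following the piecewise definition~\eqref{lcub}, the family splits into a finite set of base cases $2 \leq n \leq 81$ and a periodic part for $n \geq 82$, where the period-$41$ recurrence $\ell_\cub(n) = \ell_\cub(n-41) + 28$ is realized by a single graft union.

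For the recursive part, I would fix once and for all a rooted grounded tree-like polycube $G$ --- the \emph{period block} --- of size $43$ containing exactly $30$ leaves, chosen so that its root direction matches a free direction available in every $T_{n-41}$ and so that the graft union $T_{n-41} \graft G$ is non-final (in particular acyclic and properly embedded in $\Z^3$). Then for $n \geq 82$ I set $T_n = T_{n-41} \graft G$. By Lemma~\ref{L:graft}, this adds $n(G) - 2 = 41$ cells and $n_1(G) - 2 = 28$ leaves, so $n(T_n) = n(T_{n-41}) + 41$ and $n_1(T_n) = n_1(T_{n-41}) + 28$; by induction $n_1(T_{n-41}) = \ell_\cub(n-41)$, whence $n_1(T_n) = \ell_\cub(n-41) + 28 = \ell_\cub(n)$, exactly as required. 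The block $G$ is the saturated gadget carrying the optimal ratio $28/41$ announced in the introduction, and maintaining a free direction at each step is what keeps the grafts well-defined along the whole induction.

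It remains to treat the base cases $2 \leq n \leq 81$, which I would build directly by assembling the atomic rooted polycubes of Figure~\ref{arbre_basique} through iterated graft unions, verifying $n_1(T_n) = \ell_\cub(n)$ in each case. Two families of sizes require care. At the exceptional sizes $n \in \{6,7,13,19,25\}$ one must reach $\ell_\cub(n) = f_\cub(n)+1$, one leaf above the generic value. In the shifted window $41 \leq n \leq 81$, where $\ell_\cub(n) = f_\cub(n-41)+28$, I would obtain $T_n$ by grafting $G$ onto a polycube of size $n-41$; crucially, when $n-41 \in \{6,7,13,19,25\}$ this auxiliary block must be chosen with exactly $f_\cub(n-41)$ leaves rather than the fully leafed value $f_\cub(n-41)+1$, so that Lemma~\ref{L:graft} yields exactly $f_\cub(n-41)+28$ leaves and not one more --- the latter would overshoot $\ell_\cub(n)$ and contradict the upper bound, which is precisely the phenomenon isolated in Proposition~\ref{P:l3}\ref{prop:delta2}. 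These finitely many configurations can equally be produced and checked by the enumeration routine underlying the paper.

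I expect the main obstacle to be the geometric verification of the base cases and the explicit description of the period block $G$: one must exhibit concrete configurations in $\Z^3$, confirm that each is acyclic (face-connected yet free of induced cycles) and realizable without overlap, and check that every intermediate rooted polycube retains a free direction so that all graft unions stay well-defined. The leaf and size bookkeeping itself is mechanical through Lemma~\ref{L:graft}, so the real difficulty is combinatorial and spatial rather than arithmetic.
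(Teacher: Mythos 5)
Your proposal follows essentially the same route as the paper: its proof likewise exhibits explicit base-case polycubes for the small and exceptional sizes and realizes the period-$41$ recurrence by grafting a $43$-cell, $30$-leaf rooted block ($R_{43}$ in Figure~\ref{F:family-3d}), with all size and leaf bookkeeping done through Lemma~\ref{L:graft}. The only caveat is that the explicit witnesses you defer --- the period block, the base cases for $2 \leq n \leq 81$, and the $90^{\circ}$ rotation between consecutive copies of the block that keeps the iterated grafts well-defined --- are precisely the substantive content of the paper's proof, supplied there by Figure~\ref{F:family-3d} together with an explicit formula for the multiplicities of the five gadgets $R_3, R_4, R_5, R_{12}, R_{43}$ in terms of the remainder of $n-2$ modulo $41$.
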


\begin{proof}
We exhibit a family of tree-like polycubes $\{U_k \mid k \geq 2\}$ realizing $\ell_\cub$, i.e. such that $n_1(U_k) = \ell_\cub(k)$ for all $k \geq 2$. First, for $k = 6,7,13,19,25$, let $U_k$ be the tree-like polycubes depicted in Figure~\ref{F:family-3d}(a), (b), (c), (d) and (e) respectively. It is easy to verify that $n_1(U_k) = \ell_\cub(k)$ in these cases.

\begin{figure}[tb]
  \centering
  \input{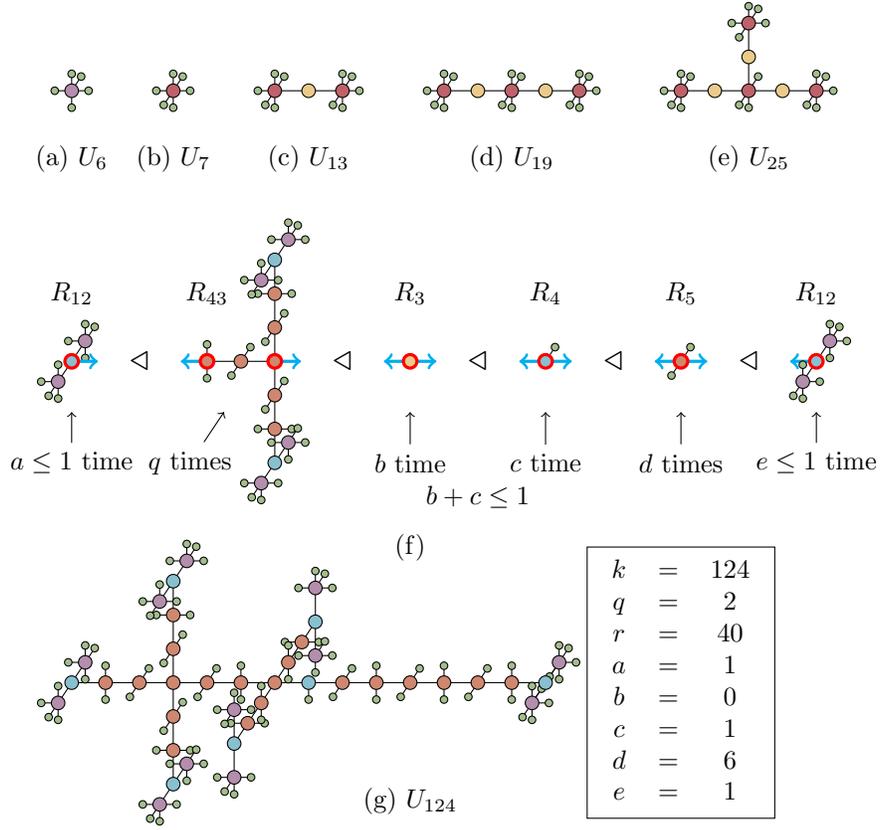}
  \caption{Fully leafed tree-like polycubes}
  \label{F:family-3d}
\end{figure}

Now, for $k \notin \{6,7,13,19,25\}$, let $q$ and $r$ be the quotient and remainder of the division of $n-2$ by $41$ and  define the  integers $a,b,c,d,e$ as follow.
\begin{alignat*}{3}
  a & = \chi(r \geq 10) \\
  b & = \chi(r \in \{1,4,7,10,11,14,17,20,23,26,27,30,33,36,39\}) \\
  c & = \chi(r \in \{2,5,8,12,15,18,21,24,28,31,34,37,40\}) \\
  d & = \left\lfloor \left(r - 10\left(\chi(r \geq 10) + \chi(r \geq 26)\right)\right) / 3 \right\rfloor \\
  e & = \chi(r \geq 26),
\end{alignat*}
where $\chi$ is the usual characteristic function. Let $U_k$ be an unrooted tree-like polycube obtained from
a rooted grounded tree-like polycube $R_k$ of the form
\begin{align} \label{eq:facto} 
R_k=R_{12}^a \graft R_{43}^q \graft R_3^b \graft R_4^c \graft R_5^d \graft R_{12}^e,
\end{align}
where, for $k \in \{3, 4, 5, 12, 43\}$, $R_k$ is depicted in Figure~\ref{F:family-3d}(f), and the exponent notation is defined by
$$R_k^\alpha = \begin{cases}
  R_2,                    & \mbox{if $k = 0$;}  \\
  R_k \graft \rho(R_k^{\alpha-1}), & \mbox{if $\alpha \geq 1$.}
\end{cases}$$
where $\rho$ is the rotation $90^{\circ}$ about the ``horizontal'' axis in Figure \ref{F:family-3d} (f) and (g). In other words, when several copies of $R_5$ or $R_{43}$ are grafted to themselves, the old graft  is rotated by $90^{\circ}$ before being grafted again. We assume that the roots and directions used for the graft union are respectively as depicted in Figure~\ref{F:family-3d} (f) by red dots and blue arrows. Note also that the two rooted grounded tree-like polycubes $R_{12}$ at each end of Figure \ref{F:family-3d} (f) are shown in the proper position up to a rotation of $90^{\circ}$.
Clearly, all graft unions in Equation \eqref{eq:facto} are well-defined and it follows from Lemma~\ref{L:graft} that
$n(R_k)   = 41q + 10(a + e) + b + 2c + 3d + 2$ and
$n_1(R_k) = 28q + 7(a + e) + c + 2d + 2 = \ell_\cub(n(R_k))$
(The recursive part in the definition of $\ell_\cub(k)$ is straightforward since $q$ is arbitrarily large and $n_1(R_{43}) = 28$.) Hence, for $k \geq 2$ and $k \notin \{6,7,13,19,25\}$, we obtain $U_k$ by taking the unrooted version of $R_k$, concluding the proof. Figure \ref{F:family-3d}(g) shows the tree-like polycube $U_{124}$ obtained from $R_{124}$ in Equation \eqref{eq:facto} with $k = 124$, and the values of $q$, $r$, $a$, $b$, $c$, $d$ and $e$ are indicated in the box to the bottom right.
\end{proof}

We now introduce a notation for the operation of graft factorization associated to the graft union of tree-like polycubes. 

\begin{definition}[Branch]\label{D:branch}
Let $T = (V,E)$ be a tree-like  polycube and $r,r'$ two adjacent vertices of $T$. Let $V_r$ and $V_{r'}$ be the set of vertices of $T$ defined by
\begin{enum}
  \item $r \in V_r$, $r' \in V_{r'}$,
  \item the subgraphs of $T$ induced by $V_r$ and $V_{r'}$ are precisely the two connected components obtained from $T$ by removing the edge $\{r,r'\}$.
\end{enum}

Then the rooted tree-like  polycube $B = (T[V_r \cup \{r'\}], r, \vv{rr'})$ is called a \emph{branch} of $T$ and the rooted tree-like  polycube $B^c = (T[V_{r'} \cup \{r\}], r', \vv{r'r})$ is called the \emph{co-branch} of $B$ in $T$. When neither $r$ nor $r'$ are leaves of $T$, then we say that $B$ and $B^c$ are \emph{proper branches} of $T$.
\end{definition}
\begin{proposition}
Let $T$ be a tree-like  polycube and $B$ a proper branch of $T$. Then both $B \graft B^c$ and $B^c \graft B$ are well-defined and final, while their corresponding unrooted tree-like polycube is precisely $T$.
\end{proposition}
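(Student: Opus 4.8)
The plan is to unwind the definitions of branch, co-branch, and graft union, and verify directly that the graft union $B \graft B^c$ reconstructs $T$ with the correct rooting, and likewise for $B^c \graft B$. Write $B = (T[V_r \cup \{r'\}], r, \vv{rr'})$ and $B^c = (T[V_{r'} \cup \{r\}], r', \vv{r'r})$ as in Definition~\ref{D:branch}. First I would check the hypothesis required by the graft union (Definition~\ref{D:graft}): for $B \graft B^c$ to be attempted, the direction $\vv{u'} = \vv{r'r}$ of $B^c$ must be a free direction of $B$, meaning $r - \vv{r'r}$ is a leaf of $T[V_r \cup \{r'\}]$. Since $r - \vv{r'r} = r + \vv{rr'} = r'$ and $r'$ is indeed a vertex of $B$ that is a leaf of $T[V_r \cup \{r'\}]$ (it was added as a single pendant cell when forming the branch), this condition holds. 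The symmetric check works for $B^c \graft B$.

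Next I would compute the translation $\tau$ prescribed by Definition~\ref{D:graft}. For $R = B$ and $R' = B^c$, we have root of $R'$ equal to $r'$, direction $\vv{u'} = \vv{r'r}$, and the graft translation vector is $\vv{r'r} - \vv{u'} = \vv{r'r} - \vv{r'r} = \vec{0}$. Hence $\tau$ is the identity translation, so the vertex set of $B \graft B^c$ is $(V_r \cup \{r'\}) \cup (V_{r'} \cup \{r\}) = V_r \cup V_{r'} = V$, and the induced subgraph $\Z_3[V]$ is exactly $T$. This is the crux of the reconstruction: the two branches overlap precisely on the shared edge $\{r,r'\}$, and because $\tau$ is trivial they glue back along that edge without displacement, recovering all of $T$ and nothing more.

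With the vertex set and induced graph identified as $T$, I would confirm the four qualifiers. The graph $G = \Z_3[V] = T$ is a tree-like polycube by hypothesis, which is acyclic, so the union is not invalid. The directions satisfy $\vv{u'} = \vv{r'r} = -\vv{rr'} = -\vv{u}$, matching the first requirement of a \emph{final} graft union. The remaining requirement is that $r + \vv{u} = r + \vv{rr'} = r'$ is \emph{not} a leaf of $G = T$; this is exactly the defining property of a \emph{proper} branch, namely that neither $r$ nor $r'$ is a leaf of $T$. Thus $B \graft B^c$ is final, hence well-defined, and its associated unrooted tree-like polycube is $T$. The argument for $B^c \graft B$ is identical after swapping the roles of $r$ and $r'$.

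I do not anticipate a genuine obstacle here, as the statement is essentially a bookkeeping verification that the branch decomposition and graft union are mutually inverse operations. The one point demanding care is the translation computation: one must track that the graft vector $\vv{r'r} - \vv{u'}$ collapses to zero precisely because the co-branch is rooted at $r'$ and carries direction $\vv{r'r}$, which is what makes the overlap cell $r$ (respectively $r'$) land back on itself rather than being duplicated. I would state this explicitly so the reader sees why the ``$-2$'' in the cell counts of Lemma~\ref{L:graft} corresponds exactly to identifying the shared pair $\{r, r'\}$.
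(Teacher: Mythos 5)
Your proof is correct and follows exactly the route the paper intends: the paper's own proof is the one-line remark that the claim ``follows from Definitions~\ref{D:graft} and \ref{D:branch},'' and your argument is precisely the careful unwinding of those definitions (free direction, zero translation vector, $\vv{u'}=-\vv{u}$, and properness giving that $r'$ is not a leaf). Nothing is missing; you have simply made explicit the bookkeeping the authors left implicit.
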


\begin{proof}
This follows from Definitions~\ref{D:graft} and \ref{D:branch}.
\end{proof}

We wish to identify branches appearing in potential counter-examples, which would need to have many leaves with respect to their number of cells. 
\begin{definition}\label{D:substitution}
Let $R, R'$ be two rooted tree-like  polycubes having the same direction. We say that $R$ is \emph{substitutable} by $R'$ if, for any tree-like  polycube $T$ containing the branch  $R$, $R^c \graft R'$ is well-defined.
\end{definition}
In other words, $R$ can always be replaced by $R'$ without creating a cycle whenever $R$ appears in some tree-like polycube $T$. A sufficient condition for $R$ to be  a substitutable rooted tree-like polycube is related to its \emph{hull} (see the first paragraph of Section~\ref{S:preliminaries}).
\begin{proposition}
Let $R$ and $R'$ be two rooted tree-like polycubes with respective roots $r$ and $r'$ and such that $R' \setminus \{r'\}$ is included in $\Hull(R \setminus \{r\})$.
Then $R$ is substitutable by $R'$.
\end{proposition}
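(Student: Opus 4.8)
The plan is to reduce the claim to a clean geometric \emph{separation} property and then feed it the hull hypothesis. Fix any tree-like polycube $T$ containing the branch $R$, let $R^c$ be its co-branch, and write $\ell = r + \vv{u}$ for the connecting cell, so that $r$ and $\ell$ are exactly the two cells shared by $R$ and $R^c$. After applying the translation of Definition~\ref{D:graft} (which carries the root of $R'$ onto $r$ and aligns its direction with $\vv{u}$), the graft $R^c \graft R'$ is defined --- its direction is free for $R^c$ because $r$ is the connecting leaf of $R^c$ --- and as a graph it is simply $\cube[V(R^c) \cup V(R')]$. Since $R^c$ and $R'$ share $r$ and $\ell$, this induced subgraph is connected, so by the cell-and-edge bookkeeping of Lemma~\ref{L:graft} it is well-defined as soon as it is acyclic; the final/non-final classification is then a routine local check at the connecting cell. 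Hence everything reduces to proving that $\cube[V(R^c) \cup V(R')]$ is a tree.

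Next I would pin down where a cycle could come from. Both $R^c$ and $R'$ are themselves trees, so any cycle in their union must traverse a lattice edge joining a cell of $V(R') \setminus V(R^c)$ (a cell genuinely introduced by $R'$) to a cell of $V(R^c) \setminus V(R')$ (a cell of the co-branch \emph{body}, i.e.\ a cell of $R^c$ other than $r$ and $\ell$). A short count of cells against edges gives the converse: the union is a tree precisely when these two sets are disjoint and no such crossing edge exists. It therefore suffices to show that no cell of $R'$ other than $r,\ell$ either coincides with, or is lattice-adjacent to, a cell of the co-branch body.

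The engine is the acyclicity of $T$. Deleting the single branch edge (Definition~\ref{D:branch}) disconnects $T$ into the branch component and the co-branch component, so --- $T$ being an \emph{induced} subgraph of $\cube$ --- the only lattice-adjacency between a branch cell and a co-branch cell is that one edge. The separation lemma I would extract from this is that the extension of the appropriate part of $R$ is insulated from the co-branch body: it meets the co-branch only at the connecting cell. Granting this, the hull hypothesis closes the argument at once. Each cell $v$ of $R' \setminus \{r'\}$ lies in $\Hull(R \setminus \{r\}) = \Int(\Ext(R \setminus \{r\}))$, so both $v$ and \emph{all} of its lattice-neighbours lie in $\Ext(R \setminus \{r\})$; since that extension avoids the co-branch body, $v$ can neither be a co-branch cell (no collision) nor be adjacent to one (no crossing edge). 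The union is then acyclic, and $R^c \graft R'$ is well-defined for every $T$, which is precisely the assertion that $R$ is substitutable by $R'$ (Definition~\ref{D:substitution}).

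The step I expect to be the main obstacle is the separation lemma itself, and specifically identifying which cell must be excised from $R$ so that its extension is genuinely cut off from the co-branch body. The danger is that $\Ext(R \setminus \{r\})$ bulges one layer past the connecting cell $\ell$ and so touches the first shell of the co-branch, which would destroy the clean insulation and allow a four-cycle through $r$, $\ell$ and two bulging cells. Making the bound airtight requires controlling the two-step neighbourhoods around $\ell$ and exploiting that $\ell$ has degree one in the induced subgraph $R'$, so that the interior operation $\Int$ trims exactly the cells that could bridge into the co-branch. Once the separation is secured in the sharp form ``the relevant extension meets the co-branch only at the connecting cell,'' everything downstream is immediate.
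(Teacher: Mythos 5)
Your overall route is the paper's: fix an arbitrary $T$ with branch $R$ and co-branch $R^c$, reduce substitutability to acyclicity of $R' \graft R^c$, localize any putative cycle to a collision or a crossing lattice edge between the body of $R'$ and the body of $R^c$, and invoke the hull hypothesis together with the fact that $T$, being an induced tree, admits only the single edge $\{r,\ell\}$ between the two sides. Your accounting of collisions and of the vertex/edge count is in fact more careful than the paper's, which treats only the crossing-edge case.

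The genuine gap is the step you yourself flag: the ``separation lemma'' is never proved, and in the form you state it --- $\Ext(R\setminus\{r\})$ meets the co-branch only at the connecting cell --- it is simply false. Since $\ell \in R\setminus\{r\}$, the set $\Ext(R\setminus\{r\})$ contains \emph{all} lattice neighbours of $\ell$, hence every co-branch cell adjacent to $\ell$; so your final paragraph, which deduces ``no collision, no crossing edge'' from ``the extension avoids the co-branch body,'' rests on a premise that fails whenever $\ell$ has degree at least $2$ in $T$. The paper does not insulate $\Ext(R\setminus\{r\})$ from the co-branch at all; it applies the hull condition at the $R'$-endpoint of the offending edge: if $\{u,v\}$ is a crossing edge with $u \in R'\setminus\{r,\ell\}$, then $u \in \Hull(R\setminus\{r\}) = \Int(\Ext(R\setminus\{r\}))$ forces $v \in N(u) \subseteq \Ext(R\setminus\{r\})$, so $v$ is lattice-adjacent to some witness $w \in R\setminus\{r\}$; when $w$ lies in the branch body this is a second edge of the induced tree $T$ joining the two components of $T$ minus $\{r,\ell\}$, contradicting acyclicity of $T$. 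Note that even this argument is silent on the sub-case $w=\ell$ --- exactly the one-layer ``bulge'' you identify --- so your diagnosis of where the reasoning is thinnest is accurate; but a proof must actually dispose of that sub-case (or of the witness dichotomy) rather than defer it, and your proposed repair via ``$\Int$ trims the bridging cells'' points at the wrong set, since it is $v$'s membership in $\Ext$, not $u$'s insulation, that carries the argument.
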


\begin{proof} Let $T$ be any rooted tree-like polycube with a branch $R$. Then we can write $T = R \graft R^c$ with $r,r'$ the respective roots of $R$ and $R^c$. 
We have to prove that $T' = R' \graft R^c$ is a tree-like polycube.
Clearly, $T'$ is connected since it is obtained by the union graft of two connected tree-like polycubes.
It remains to prove that $T'$ is acyclic.
Arguing by contradiction, assume that there is a cycle in $T'$.
Since both $R'$ and $R^c$ are acyclic, the cycle must contain some edge $\{u,v\}$ of $\treelikepolyset$ such that $u \in R' \setminus \{r,r'\}$ and $v \in R^c \setminus \{r,r'\}$.
But $u \in \Hull(R \setminus \{r\})$ and the definition of hull imply $v \in \Ext(R \setminus \{r\})$, i.e., $v$ is adjacent in $\treelikepolyset$ to some cell in $R \setminus \{r\}$.
This means, that there exists a cycle in $R \graft \R^c$, contradicting the acyclicity of $T$.
\end{proof}
We are now ready to classify rooted tree-like polycubes.
\begin{definition}\label{D:sparse-abundant}
Let $R$ be a rooted tree-like polycube. We say that $R$ is \emph{abundant} if one of the following two conditions is satisfied:
\begin{enum}
  \item $R$ contains exactly two cells,
  \item There does not exist another abundant rooted tree-like polycube $R'$, such that $R$ is substitutable by $R'$, $n(R') < n(R)$ and
\begin{equation}\label{EQ:sparse}
n_1(R) - n_1(R') \leq \Delta \ell_\cub(n(R) - n(R'))
\end{equation}
\end{enum}
Otherwise, we say that $R$ is \emph{sparse}.
\end{definition}

The following observation is immediate.
\begin{proposition}\label{P:abundant}
\item All branches of abundant rooted tree-like polycubes are also abundant.
\end{proposition}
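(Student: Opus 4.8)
The plan is to prove the contrapositive: I will show that if some branch $B'$ of an abundant rooted tree-like polycube $R$ were sparse, then $R$ itself would have to be sparse, contradicting the hypothesis. So I would fix an abundant $R$, assume for contradiction that it contains a branch $B'$ that is sparse, and then manufacture from the sparseness of $B'$ a witness to the sparseness of $R$.

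The key idea is that sparseness of $B'$ means, by Definition~\ref{D:sparse-abundant}, that there is an abundant rooted tree-like polycube $B''$ (with the same direction as $B'$) such that $B'$ is substitutable by $B''$, $n(B'') < n(B')$, and the leaf-deficit inequality $n_1(B') - n_1(B'') \leq \Delta\ell_\cub(n(B') - n(B''))$ holds. The substitution step is where I expect to do the real bookkeeping: I must perform inside $R$ the same replacement of $B'$ by $B''$ to obtain a new rooted tree-like polycube $R'$, and I need this to be a \emph{valid} substitution so that $R'$ is genuinely a tree-like polycube. Since $B'$ is a branch of $R$, I can write $R$ via its graft factorization through $B'$, and substitutability of $B'$ by $B''$ guarantees that replacing $B'$ by $B''$ keeps the result acyclic (this is exactly the content of substitutability), giving me the required $R'$ with $B'$ replaced by $B''$, and $R$ substitutable by $R'$.

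Once $R'$ is in hand, I would use Lemma~\ref{L:graft} (the additivity of $n$, $n_1$, $n_i$ under graft union) to transfer the size and leaf relations from the pair $(B', B'')$ to the pair $(R, R')$. Because the graft union only changes counts additively and the part of $R$ outside $B'$ is untouched, both $n(R) - n(R')$ and $n_1(R) - n_1(R')$ equal $n(B') - n(B'')$ and $n_1(B') - n_1(B'')$ respectively. Hence $n(R') < n(R)$, and the inequality $n_1(R) - n_1(R') = n_1(B') - n_1(B'') \leq \Delta\ell_\cub(n(B') - n(B'')) = \Delta\ell_\cub(n(R) - n(R'))$ is inherited verbatim.

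The last point to secure is that $R'$ is itself abundant, since Definition~\ref{D:sparse-abundant}(ii) requires the witness $R'$ to be abundant. Here I would argue by taking $B'$ to be a \emph{minimal} sparse branch, or equivalently use well-founded induction on $n(R)$: the substitute $B''$ is abundant by construction, and by induction the surrounding structure poses no problem, so one can choose $R'$ abundant (if $R'$ were sparse, its own abundant witness would serve as an even smaller witness for $R$). This exhibits an abundant $R'$ satisfying all three conditions of Definition~\ref{D:sparse-abundant}(ii) for $R$, proving $R$ is sparse — the desired contradiction. The main obstacle I anticipate is carefully justifying the substitution inside $R$: I must check that performing the branch replacement preserves the tree-like property globally, which is precisely why the notion of substitutability (and the hull-based sufficient condition proved just above) was introduced, so I would lean on that machinery rather than re-derive acyclicity by hand.
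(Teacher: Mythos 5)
Your proposal is correct and follows essentially the same route as the paper's own proof: assume a branch is sparse, extract its abundant substitute, perform that substitution inside $R$, and transfer the size and leaf counts through the graft union (Lemma~\ref{L:graft}) to contradict the abundance of $R$. You are in fact slightly more careful than the paper, which silently takes the resulting witness $R'$ to be abundant as Definition~\ref{D:sparse-abundant}(ii) requires; your minimality/well-founded-induction argument closes that small gap.
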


\begin{proof}
By contradiction, assume that $R$ is an abundant tree-like polycube and that $S$ is a branch of $R$ that is not abundant. Then, by Definition~\ref{D:sparse-abundant}, $S$ has more than two cells and there must exist another abundant rooted tree-like polycube $S'$ such that $S$ is substitutable by $S'$, $n(S') < n(S)$ and $n_1(S) - n_1(S') \leq \Delta \ell_\cub(n(S) - n(S'))$.
Let $R'$ be the rooted tree-like polycube obtained from $R$ by substituting its branch $S$ by $S'$. Then, clearly, $R$ is substitutable by $R'$. Moreover, $n(R) - n(R') = n(S) - n(S')$ and $n_1(R) - n_1(R') - n_1(S) - n_1(S')$, which implies $n(R') < n(R)$ as well as $n_1(R) - n_1(R') \leq \Delta \ell_\cub(n(R) - n(R'))$, contradicting the assumption that $R$ is abundant.
\end{proof}

\begin{algorithm}[tb]
  \begin{algorithmic}[1]
    \Function{AbundantBranches}{$h$ : height} : pair of maps
      \State For $i = 1,2,\ldots,h$, let $A[i] \gets \emptyset$ and $F[i] \gets \emptyset$
      \State $A[1], F[1] \gets \{$atomic free tree-like  polycubes of size $5$ and $6\}$
      \For{$i \gets 1,2,\ldots,h$}
        \For{\textbf{each} atomic rooted free tree-like  polycube $B$}
          \For{\textbf{each} $B' \in B \graft \cup_{j=0}^{i-1} A[j]$ of height $i$}
            \If{$B'$ is abundant}
              \State \textbf{if} $B'$ is final \textbf{then} $F[i] \gets F[i] \cup B'$
              \State \textbf{else} $A[i] \gets A[i] \cup B'$
            \EndIf
          \EndFor
        \EndFor
      \EndFor
      \State \Return $(A,F)$
    \EndFunction
  \end{algorithmic}
  \caption{Generation of all abundant rooted tree-like polycubes.}\label{A:enumeration}
\end{algorithm}

Using Definition~\ref{D:sparse-abundant}, one can enumerate all abundant rooted tree-like polycubes up to a given height, 
both final and nonfinal, using a brute-force approach as described by Algorithm~\ref{A:enumeration}. In Algorithm~\ref{A:enumeration}, for a given  integer $h>0$ and each height $i = 1,2,\ldots,h$, the abundant final and nonfinal rooted tree-like polycubes are stored respectively in the two lists $F[i]$ and $A[i]$.

Algorithm~\ref{A:enumeration} was implemented in both Python \cite{bitbucket} and Haskell \cite{haskell} and run with increasing values of $h$. It turned out that there exists no abundant rooted tree-like polycube for $h = 11$, i.e. $|A[11]| = |F[11]| = 0$. Due to a lack of space, we cannot exhibit all abundant rooted tree-like polycubes, but we can give some examples. For instance, in Figure~\ref{F:family-3d}, any rooted version of the trees $U_6$, $U_7$ and $U_{12}$ is abundant, while rooted versions of $U_3$, $U_4$, $U_5$, $U_{13}$, $U_{19}$, $U_{25}$ and $U_{43}$ are sparse.


The following facts are directly observed by computation.
\begin{lemma}\label{L:observations}
Let $T$  be an abundant rooted tree-like polycube. Then
\begin{enum}
  \item  The height of $T$ is at most $10$.
  \item \label{obs:inspection}If $T$ is final  then $n_1(T) \leq \ell_\cub(n(T))$.
  \item \label{obs:6-7-13-19-25}If $T= B \graft B^c$ and $n(T) \in \{13,19,25\}$, then either $B$ or $B^c$ is sparse.
\end{enum}
\end{lemma}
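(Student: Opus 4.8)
The statement to prove is Lemma~\ref{L:observations}, which asserts three facts about abundant rooted tree-like polycubes, all described as "directly observed by computation." The plan is to explain how each of the three parts follows from running Algorithm~\ref{A:enumeration} and inspecting its output, since the paper explicitly frames these as computational observations rather than analytic claims.

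\textbf{Overall approach.} The key tool is Algorithm~\ref{A:enumeration}, which enumerates all abundant rooted tree-like polycubes (both final and nonfinal) up to a prescribed height $h$, storing them in the maps $A$ and $F$. By Proposition~\ref{P:abundant}, abundance is closed under taking branches, so the inductive construction in the algorithm (building height-$i$ candidates by grafting atomic pieces onto already-computed abundant pieces of smaller height) is sound: every abundant polycube of height $i$ does arise as such a graft of an atomic rooted polycube with an abundant branch of height at most $i-1$, and the abundance test \eqref{EQ:sparse} together with the substitutability check correctly filters the candidates. First I would note that the paper has already recorded the crucial termination fact: running the algorithm with increasing $h$ yields $|A[11]| = |F[11]| = 0$, i.e., no abundant rooted tree-like polycube has height $11$.

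\textbf{Part (i).} Since the generation proceeds by height and since $A[11] = F[11] = \emptyset$, and since any abundant polycube of height $h$ would (by Proposition~\ref{P:abundant}) have an abundant branch at every smaller height so that a nonempty $A[h]$ or $F[h]$ forces nonempty lower levels, the emptiness at height $11$ propagates upward: there can be no abundant rooted tree-like polycube of height $11$ or greater. Hence the height of any abundant $T$ is at most $10$. This is the immediate payoff of the algorithm's termination.

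\textbf{Parts (ii) and (iii).} These are verified by direct inspection of the finite lists $A[i]$ and $F[i]$ for $1 \leq i \leq 10$ produced by the implementations (in Python and Haskell) cited in the paper. For part \ref{obs:inspection}, one checks for every final abundant $T$ recorded in $\bigcup_i F[i]$ that $n_1(T) \leq \ell_\cub(n(T))$, simply comparing the stored leaf count against the value given by \eqref{lcub}. For part \ref{obs:6-7-13-19-25}, one enumerates all ways to write a candidate $T$ with $n(T) \in \{13,19,25\}$ as a final graft $B \graft B^c$ of two proper branches, and confirms that in each such decomposition at least one of $B$, $B^c$ fails the abundance test, i.e.\ is sparse; equivalently, no such $T$ admits a decomposition into two abundant branches. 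The main obstacle here is not mathematical depth but the finiteness and trustworthiness of the exhaustive search: one must be confident that the enumeration in Algorithm~\ref{A:enumeration} is complete (no abundant polycube is missed) and that the abundance predicate \eqref{EQ:sparse}, which itself refers to $\Delta\ell_\cub$ computed from Proposition~\ref{P:l3}, is implemented faithfully. Completeness rests on Proposition~\ref{P:abundant} (branches of abundant polycubes are abundant), which guarantees that the bottom-up graft construction reaches every abundant polycube; once that is granted, all three parts reduce to reading off a finite, machine-checked table.
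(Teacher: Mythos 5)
Your proposal is correct and follows essentially the same route as the paper: part (i) from the computed fact that $|A[11]|=|F[11]|=0$ together with Proposition~\ref{P:abundant} propagating emptiness to all greater heights, and parts (ii) and (iii) by exhaustive inspection of the finite lists produced by Algorithm~\ref{A:enumeration} (the paper phrases (iii) slightly more directly, noting that a final graft of two abundant branches of size $13$, $19$ or $25$ would have to appear in $F$, which contains no such polycube). Your additional remarks on the completeness of the enumeration via Proposition~\ref{P:abundant} only make explicit what the paper leaves implicit.
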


\begin{proof}
Let
$A = \bigcup_{i=1}^h A(i) \quad \text{and} \quad F = \bigcup_{i=1}^h F(i),$
where $A(i)$ and $F(i)$ are respectively the sets of abundant nonfinal and final rooted tree-like polycubes computed by Algorithm~\ref{A:enumeration} with $h = 11$. In particular, we have $|A(i)|, |F(i)| > 0$ for $1 \leq i \leq 10$, but $|A(11)| = |F(11)| = 0$ (see \cite{bitbucket,haskell}).
(i) By Proposition~\ref{P:abundant}, it is immediate that if $|A(i)| = 0$ then both $|A(i+1)|=0$ and $|F(i+1)|=0$ for any $i \geq 1$, so the result follows.
(ii) By exhaustive inspection of $F$.
(iii) Assume by contradiction that both $B$ and $B^c$ are abundant. By inspecting $F$, we must have $T \in F$, but $F$ does not contain any final, abundant, rooted tree-like polycube with $13$, $19$ or $25$ vertices. 
\end{proof}

The nomenclature ``sparse'' and ``abundant'' is better understood with the following lemma.
\begin{lemma}\label{L:abundant}
Assume that there exists a tree-like  polycube $T$ of minimum size such that $n_1(T) > \ell_\cub(n(T))$. Then every branch of $T$ is abundant.
\end{lemma}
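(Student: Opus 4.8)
The plan is to argue by contradiction through the \emph{co-branch}. Suppose $T$ is a minimal counter-example with $n_1(T) > \ell_\cub(n(T))$, and suppose toward a contradiction that $T$ has a branch $B$ that is \emph{sparse}. I would first dispose of the trivial case: by Definition~\ref{D:sparse-abundant}, any two-cell rooted tree-like polycube is abundant, so a sparse branch $B$ necessarily has $n(B) > 2$ and, crucially, there exists an abundant rooted tree-like polycube $B'$ such that $B$ is substitutable by $B'$, with $n(B') < n(B)$ and $n_1(B) - n_1(B') \leq \Delta\ell_\cub(n(B) - n(B'))$.

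The heart of the argument is to substitute $B$ by $B'$ inside $T$ and track the leaf count via the graft-union bookkeeping of Lemma~\ref{L:graft}. Write $T = B \graft B^c$ and set $T' = B' \graft B^c$; by Definition~\ref{D:substitution} substitutability guarantees $T'$ is a well-defined tree-like polycube. Using Lemma~\ref{L:graft} on both $T$ and $T'$, the contributions of $B^c$ cancel, yielding $n(T) - n(T') = n(B) - n(B')$ and $n_1(T) - n_1(T') = n_1(B) - n_1(B')$. Since $n(B') < n(B)$, we get $n(T') < n(T)$, so $T'$ is strictly smaller than the minimal counter-example $T$, and therefore $n_1(T') \leq \ell_\cub(n(T'))$.

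Now I would chain these facts together to derive the contradiction. Set $i = n(T) - n(T') = n(B) - n(B') > 0$. Then
\begin{align*}
  n_1(T) &= n_1(T') + \left(n_1(B) - n_1(B')\right) \\
         &\leq \ell_\cub(n(T')) + \Delta\ell_\cub(i),
\end{align*}
using the minimality bound on $T'$ and the substitutability inequality. To close the loop I need $\ell_\cub(n(T')) + \Delta\ell_\cub(i) \leq \ell_\cub(n(T'){+}i) = \ell_\cub(n(T))$, which would give $n_1(T) \leq \ell_\cub(n(T))$, contradicting the assumption on $T$. This inequality is exactly the content of Proposition~\ref{P:l3}\ref{prop:delta2}: it holds unless $n(T') \in \{6,7,13,19,25\}$, where $\ell_\cub$ exceeds $f_\cub$ by one and the superadditivity can fail.

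The main obstacle, therefore, is handling the exceptional sizes $n(T') \in \{6,7,13,19,25\}$, where the naive chain breaks by exactly one leaf. For the small cases $6, 7$ I expect the argument to lean on Lemma~\ref{L:observations}\ref{obs:6-7-13-19-25}: if substituting $B$ by $B'$ were to land $T'$ at one of the sizes $13, 19, 25$, that observation forbids both proper branches of $T'$ from being abundant, which I would use either to exclude the configuration or to re-route the substitution. For $n(T') \in \{6,7\}$ I would appeal directly to the exhaustive computational classification underlying Lemma~\ref{L:observations} (heights bounded by $10$, finality constraints of part~\ref{obs:inspection}) to rule out a sparse branch producing such a small abundant image. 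Once these finitely many cases are checked against the stored lists $A$ and $F$, every branch of $T$ must be abundant, completing the proof.
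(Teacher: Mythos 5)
Your main chain of reasoning---write $T = B \graft B^c$, substitute the sparse branch $B$ by the abundant $B'$ to form $T' = B' \graft B^c$, cancel the contributions of $B^c$ via Lemma~\ref{L:graft}, invoke minimality of $T$ to get $n_1(T') \leq \ell_\cub(n(T'))$, and reduce everything to the superadditivity inequality $\ell_\cub(n(T')) + \Delta\ell_\cub(i) \leq \ell_\cub(n(T))$, which by Proposition~\ref{P:l3}(ii) can only fail when $n(T') \in \{6,7,13,19,25\}$---is exactly the first half of the paper's proof. The gap is in your treatment of the exceptional case, which is where the real work of this lemma lies. Knowing $n(T') = n(B') + n(B^c) - 2 \in \{6,7,13,19,25\}$ bounds $n(B')$ and $n(B^c)$ by $25$, but it places \emph{no bound on $n(B)$}, and hence none on $n(T) = n(B) + n(B^c) - 2$. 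So the configurations remaining after your dichotomy are not finitely many, and ``checking them against the stored lists $A$ and $F$'' is not yet a terminating step. Your suggestion to ``exclude the configuration or re-route the substitution'' via Lemma~\ref{L:observations}\ref{obs:6-7-13-19-25} points at the right tool but does not say what to do with it.

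The paper's missing move is a symmetrization: since $B'$ is abundant and $n(B' \graft B^c)$ lies in the exceptional set, Lemma~\ref{L:observations}\ref{obs:6-7-13-19-25} forces the co-branch $B^c$ to be sparse as well. One then runs the identical substitution argument with the roles of $B$ and $B^c$ exchanged: either the superadditivity inequality holds for that substitution and yields the same contradiction, or $B^c$ is substitutable by some abundant $C$ with $n(B \graft C) \in \{6,7,13,19,25\}$, which finally bounds $n(B) \leq 25$. Only now is $n(T) \leq 48$ and the case analysis finite; the paper further observes that $B$ and $B^c$ must themselves be fully leafed (otherwise minimality of $T$ is violated), which restricts the exhaustive inspection to sparse, fully leafed rooted tree-like polycubes of size at most $25$. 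Without the step establishing that $B^c$ is sparse and the ensuing swap, your proof does not close.
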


\begin{proof}
Let $B$ be any sparse branch of $T$ and $B^c$ its co-branch so that $T = B \graft B^c$  so that $B$  can be substituted by the abundant rooted tree-like polycube $B'$. Let $T' = B' \graft B^c$ and suppose first that
\begin{equation}\label{EQ:bc}
\ell_\cub(n(B \graft B^c)) - \ell_\cub(n(B' \graft B^c)) \geq \Delta\ell_\cub(n(B) - n(B')),
\end{equation}
Then Inequation~\eqref{EQ:sparse} implies
$$\Delta\ell_\cub(n(B) - n(B')) \geq n_1(B) - n_1(B'),$$
so that
\begin{eqnarray*}
  \ell_\cub(n(T)) = \ell_\cub(n(B \graft B^c))
    & \geq & n_1(B) - n_1(B') + \ell_\cub(n(B' \graft B^c)) \\
    & \geq & n_1(B) - n_1(B') + n_1(B' \graft B^c) \\
    &   =  & n_1(B \graft B^c) =  n_1(T),
\end{eqnarray*}
contradicting the hypothesis $ n_1(T)>\ell_\cub(n(T))$.
 It follows  that
\begin{equation}\label{EQ:smaller}
  \ell_\cub(n(B \graft B^c)) - \ell_\cub(n(B' \graft B^c)) < \Delta \ell_\cub(n(B) - n(B')).
\end{equation}
By Proposition~\ref{P:l3}(ii), this implies that $n(B' \graft B^c) \in \{6,7,13,19,25\}$. Since $B'$ is abundant, Lemma~\ref{L:observations}\ref{obs:6-7-13-19-25} implies that $B^c$ is sparse. Therefore, using the above argument, either  Inequations \eqref{EQ:bc} are obtained by swapping $B$ and $B^c$, leading to a contradiction, or $B^c$ can be substituted by some abundant branch $C$ such that $n(B \graft C) \in \{6,7,13,19,25\}$. Hence, $n(B), n(B^c) \leq 25$.
To conclude, observe that $B$ and $B^c$ must be fully leafed for if $B$ (or $B^c$) were not, then $B^c$ (or $B$) would be a counter-example of size $n(B^c) < n(T)$, contradicting the minimality assumption of $T$. But exhaustive inspection of sparse and fully leafed rooted tree-like polycubes of size at most $25$ yields no counterexample $T$, concluding the proof.
\end{proof}

The following fact, together with Lemma~\ref{L:family-3d},   leads to our main result. 
\begin{lemma}\label{L:upper-3d}
For all $n \geq 2$, $L_\cub(n) \leq \ell_\cub(n)$.
\end{lemma}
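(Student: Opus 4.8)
The plan is to argue by contradiction in the same spirit as the polyomino case (Lemma~\ref{L:upper-2d}), but now exploiting the full machinery of graft unions, branches, and the abundant/sparse classification. Suppose that the inequality fails for some $n$, and let $T$ be a counter-example of minimum size, so that $n_1(T) > \ell_\cub(n(T))$. By Lemma~\ref{L:abundant}, every branch of such a minimal $T$ must be abundant. The goal is to derive a contradiction by showing that $T$ itself is forced into a structure whose leaf count cannot exceed $\ell_\cub(n(T))$.

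First I would handle the small cases directly: since by Lemma~\ref{L:observations}(i) abundant rooted tree-like polycubes have height at most $10$, and since $|A(11)| = |F(11)| = 0$, the abundant structures are finite in number and fully enumerated by Algorithm~\ref{A:enumeration}. For $n$ below some explicit threshold (essentially the sizes reachable without invoking the recursive $+41$ step of $\ell_\cub$), I would rule out a counter-example by exhaustive inspection of the computed lists $A$ and $F$, much as the $2$D proof finishes by inspecting polyominoes of size at most $6$.

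For the general case, the strategy is to pick a proper branch $B$ of $T$ with co-branch $B^c$, writing $T = B \graft B^c$ as a final graft union. Because $T$ is minimal, both $B$ and $B^c$ are abundant by Lemma~\ref{L:abundant}, and by minimality each must be fully leafed, i.e. $n_1(B) = \ell_\cub(n(B))$ and similarly for $B^c$. Using Lemma~\ref{L:graft}, we have $n(T) = n(B) + n(B^c) - 2$ and $n_1(T) = n_1(B) + n_1(B^c) - 2$. The crux is then to show that
\begin{equation*}
  \ell_\cub(n(B)) + \ell_\cub(n(B^c)) - 2 \leq \ell_\cub(n(B) + n(B^c) - 2),
\end{equation*}
which would immediately give $n_1(T) \leq \ell_\cub(n(T))$, the desired contradiction. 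This superadditivity-type estimate is exactly where the function $\Delta\ell_\cub$ from Proposition~\ref{P:l3} enters: the right-hand side grows at the asymptotic rate governed by $\Delta\ell_\cub$, and the special values $\{6,7,13,19,25\}$ where $\ell_\cub$ exceeds the naive floor formula are precisely the cases that must be excluded. Here Lemma~\ref{L:observations}(iii) is the key tool, since it guarantees that a final abundant decomposition cannot land in $\{13,19,25\}$ with both halves abundant, closing off the dangerous cases.

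The main obstacle, I expect, is the verification of the superadditivity inequality uniformly in $n(B)$ and $n(B^c)$, since $\ell_\cub$ is defined piecewise with exceptional small values and only eventually settles into the clean recursion $\ell_\cub(n) = \ell_\cub(n-41) + 28$. One cannot simply cite convexity; instead the argument must combine the bounded-difference property of Proposition~\ref{P:l3}(i), the characterization of exceptional $n$ in Proposition~\ref{P:l3}(ii), and the finite exhaustive data about abundant branches to control the boundary cases where one of $B$, $B^c$ is small. Reducing the infinitely many pairs $(n(B), n(B^c))$ to finitely many residue classes modulo $41$, and then checking those against the enumerated abundant structures, is the technical heart of the proof; the delicate point is ensuring that every pair for which the inequality could conceivably fail corresponds either to a non-abundant branch or to an exceptional size already excluded by Lemma~\ref{L:observations}.
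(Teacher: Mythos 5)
Your overall scaffolding matches the paper's: minimal counter-example, Lemma~\ref{L:abundant} to conclude that every branch is abundant, and a final decomposition $T = B \graft B^c$ into two abundant branches. Where you diverge is in the closing step, and that is where the proposal breaks. The paper does \emph{not} prove any superadditivity of $\ell_\cub$; it closes by citing Lemma~\ref{L:observations}(ii), i.e.\ the exhaustive computational check that every \emph{final abundant} rooted tree-like polycube $T$ satisfies $n_1(T) \leq \ell_\cub(n(T))$ --- a finite verification, since abundant polycubes have height at most $10$. Your proposed replacement, the inequality $\ell_\cub(n(B)) + \ell_\cub(n(B^c)) - 2 \leq \ell_\cub(n(B) + n(B^c) - 2)$, is simply false. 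Take $n(B) = 6$ and $n(B^c) = 7$: then $\ell_\cub(6) + \ell_\cub(7) - 2 = 5 + 6 - 2 = 9$, while $\ell_\cub(11) = \lfloor 24/3 \rfloor = 8$. Likewise $n(B) = n(B^c) = 7$ gives $10 > 9 = \ell_\cub(12)$, and $n(B) = 7$, $n(B^c) = 13$ gives $14 > 13 = \ell_\cub(18)$. The dangerous configurations are those where a \emph{branch} size lies in $\{6,7,13,19,25\}$, not those where the \emph{total} size does; consequently neither Proposition~\ref{P:l3}(ii) (which concerns the exceptionality of the base point $n$ in $\ell_\cub(n+k)-\ell_\cub(n)$) nor Lemma~\ref{L:observations}(iii) (which only excludes $n(T) \in \{13,19,25\}$) covers them --- note $11, 12, 18 \notin \{13,19,25\}$.

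The reason these numerical failures do not produce actual counter-examples is geometric, not arithmetic: one cannot realize, say, an $11$-cell tree-like polycube as a final graft of a $6$-cell branch with $5$ leaves and a $7$-cell branch with $6$ leaves. That impossibility is exactly the content encoded in the machine enumeration behind Lemma~\ref{L:observations}(ii), and no purely numerical manipulation of $\ell_\cub$ can substitute for it. You gesture at ``the finite exhaustive data about abundant branches'' in your last paragraph, but the statement that actually does the work --- the inspection of all final abundant rooted tree-like polycubes --- is never invoked, and the inequality you put at the heart of the argument cannot be repaired by residue classes modulo $41$. (A minor additional slip: minimality of $T$ gives only $n_1(B) \leq \ell_\cub(n(B))$, not that $B$ is fully leafed; this does not affect your argument, since only the upper bound is used, but the claim as written is unjustified.)
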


\begin{proof}
By contradiction, assume that there exists a tree-like  polycube $T$ of minimum size such that $n_1(T) > \ell_\cub(n(T))$. By Lemma~\ref{L:abundant}, every branch of $T$ is abundant, so that there must exist two abundant branches $B$ and $B'$ such that $T = B \graft B'$ is final. The result follows from Lemma~\ref{L:observations}\ref{obs:inspection}.
\end{proof}
Thus we have proved
\begin{theorem}\label{T:3d} For all $n\geq 2$,
$L_\cub(n) = \ell_\cub(n)$ and the asymptotic growth of $L_\cub$ is given by $L_\cub(n) \sim \frac{28}{41}n$.
\end{theorem}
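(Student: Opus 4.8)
The plan is to close the loop exactly as the paper has structured it: Theorem~\ref{T:3d} is a two-part statement, and each part follows by assembling results already in hand. For the identity $L_\cub(n) = \ell_\cub(n)$, I would invoke Lemma~\ref{L:family-3d}, which furnishes the explicit family $\{U_k\}$ realizing $\ell_\cub$ and thereby gives the lower bound $L_\cub(n) \geq \ell_\cub(n)$, together with Lemma~\ref{L:upper-3d}, which gives the matching upper bound $L_\cub(n) \leq \ell_\cub(n)$. Sandwiching these two inequalities immediately yields equality for all $n \geq 2$. This is the routine part; the real content lives in Lemmas~\ref{L:abundant} and \ref{L:observations}, which are assumed here.

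For the asymptotic statement $L_\cub(n) \sim \frac{28}{41}n$, I would work directly from the recursive definition \eqref{lcub}. The key observation is that for $n \geq 82$ we have $\ell_\cub(n) = \ell_\cub(n-41) + 28$, so the sequence grows by exactly $28$ over each block of $41$. First I would fix a residue class $n \equiv s \pmod{41}$ with $0 \leq s \leq 40$ and write $n = 41m + s$ for the base values, so that $\ell_\cub(41m + s) = 28m + \ell_\cub(s')$ for the appropriate starting value $s'$ in the range governed by \eqref{lcub} (taking care of the $41 \leq n \leq 81$ band, which contributes the constant $f_\cub(s'-41)+28$). Since the additive correction terms coming from $f_\cub$ and the $+1$ exceptions in \eqref{lcub} are bounded uniformly in $n$, dividing through by $n = 41m + s$ and letting $m \to \infty$ gives $\ell_\cub(n)/n \to 28/41$. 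Because the bounded correction is $O(1)$ while both $\ell_\cub(n)$ and $n$ are $\Theta(n)$, the limit is independent of the residue $s$, establishing $L_\cub(n) = \ell_\cub(n) \sim \frac{28}{41}n$.

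The main obstacle I anticipate is not in the asymptotics — which reduce to a clean bounded-perturbation argument on a linear recurrence — but rather in making sure the asymptotic bookkeeping correctly accounts for the three regimes of \eqref{lcub} and \eqref{fcub}: the small-$n$ values ($2 \leq n \leq 40$) with the five exceptional points $\{6,7,13,19,25\}$, the transitional band ($41 \leq n \leq 81$), and the genuinely recursive regime ($n \geq 82$). A careful statement would note that the exceptional $+1$ terms and the piecewise definition of $f_\cub$ all fall inside a single $41$-wide window and hence are absorbed into the $O(1)$ error, so none of them affect the leading coefficient. With that observation the asymptotic claim follows with essentially no computation, and the theorem is complete.
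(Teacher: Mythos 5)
Your proposal matches the paper's argument exactly: the equality is obtained by sandwiching Lemma~\ref{L:family-3d} against Lemma~\ref{L:upper-3d}, and the asymptotics follow from the period-$41$, increment-$28$ recurrence in \eqref{lcub} with all exceptional and piecewise terms absorbed into a bounded error. The paper states this even more tersely (``Thus we have proved''), so your write-up is, if anything, more explicit than the original.
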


\section{Saturated Tree-Like Polyforms}
\label{S:Saturated}

It was shown in the previous sections that the functions $L_\squ$, $L_{\hex}$, $L_{\tri}$ and $L_\cub$ all satisfy linear recurrences.
Let $L$ denote any of these four functions. Then it is immediate that there exists two parallel linear functions $\overline{L}$, $\underline{L}$ and a positive integer $n_0$ such that
$$\underline{L}(n) \leq L(n) \leq \overline{L}(n), \quad \mbox{for $n \geq n_0$,}$$
In all four cases, if we add the constraint that there exist infinitely many positive integers $n>0$ for which $L(n) = \overline{L}(n)$ and $L(n) = \underline{L}(n)$, then the functions $\overline{L}(n)$ and $\underline{L}(n)$ become unique. In this section, we are interested in polyforms and polycubes of size $n$ such that $L(n) \geq \overline{L}(n)$. 
\begin{definition}[Saturated tree-like polyforms and polycubes]
A fully leafed tree-like polyform or polycube $T$ is called \emph{saturated} when $n_1(T) \geq \overline{L}(n(T))$.
We denote by $SAT_i(n)$ the set of saturated tree-like polyforms of size $n$ up to an isometry of the corresponding lattice.
\end{definition}

Sets of saturated  tree-like polyforms and polycubes possess structural properties that allow their bijective reduction  to  simpler polyforms. These bijections are, to our actual knowledge,  lattice dependent and are useful in the enumeration of saturated tree-like polyforms. 
 We describe these bijections in the subsections that follow.

\subsection{Saturated Tree-Like Polyominoes}
\label{S:Saturatedpolyomin}
The two bounding linear functions of the function $L_\squ(n)$ are 
\begin{align*}
\overline{L}_\squ(n)=\frac{n+3}{2} \quad \text{and} \quad
\underline{L}_\squ(n)=\frac{n+1}{2}.
\end{align*}
and for integers $k\geq1$,  saturated tree-like polyominoes $T$ have size $n(T)=4k+1$ and $n_1(T)=2k+2$ leaves.
\begin{proposition}
\label{P:satpoly}
Let $T$ be a saturated tree-like polyomino and $u$ a vertex of depth one of $T$. Then  $deg_T(u)=4$
\end{proposition}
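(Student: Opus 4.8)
The plan is to argue by contradiction, showing that a vertex of depth one in a saturated tree-like polyomino cannot have degree $2$ or $3$, leaving only degree $4$ as possible. Recall that a vertex $u$ of depth one satisfies $\deg_T(u) \in \{2,3,4\}$ and has at least one adjacent leaf; moreover, by saturation, $T$ must achieve $n_1(T) = \overline{L}_\squ(n(T)) = (n(T)+3)/2$, which for $n(T) = 4k+1$ gives exactly $n_1(T) = 2k+2$. The key invariant I would track is the \emph{leaf ratio} $n_1(T)/n(T)$: saturation forces this ratio to be maximal, so any local modification that does not improve the ratio, yet produces a valid fully leafed polyomino of the appropriate residue class, contradicts saturation.

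First I would suppose $\deg_T(u) = 2$. Then $u$ has a single adjacent leaf $\ell$. I would remove $\ell$, obtaining a tree-like polyomino $T'$ with $n(T') = n(T) - 1 = 4k$ and $n_1(T') = n_1(T) = 2k+2$ (removing $\ell$ turns $u$ into a leaf, so the leaf count is unchanged). But then $n_1(T') = 2k+2 > (4k+3)/2$, which exceeds $\overline{L}_\squ(4k) = (4k+3)/2$; since $\overline{L}_\squ$ is the genuine upper envelope established by Theorem~\ref{T:2d} (namely $L_\squ(4k) = 2k+1$ via \eqref{defl2}), this already contradicts $n_1(T') \le L_\squ(n(T'))$. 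So the degree-$2$ case falls out directly from the leaf function bound.

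Next I would suppose $\deg_T(u) = 3$, so $u$ carries exactly two adjacent leaves together with one neighbor $w$ of larger depth. Here I would remove both leaves adjacent to $u$, giving $T'$ with $n(T') = 4k - 1$ and $n_1(T') = n_1(T) - 2 + 1 = 2k+1$ (we lose two leaves but $u$ itself becomes a leaf). Checking against $L_\squ(4k-1) = 2k$ from \eqref{defl2}, we again get $n_1(T') = 2k+1 > 2k = L_\squ(4k-1)$, the desired contradiction. Thus the two remaining cases are each eliminated by reducing to a smaller polyomino and comparing with the exact value of $L_\squ$, and only $\deg_T(u) = 4$ survives.

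The main obstacle I anticipate is bookkeeping the exact leaf count after each removal, since whether the pruned parent $u$ becomes a new leaf depends on its residual degree; one must verify in each case that $u$ has no other non-leaf behaviour hiding (e.g. that in the degree-$3$ case the neighbor $w$ is genuinely of higher depth and not itself removed). A secondary subtlety is confirming that $T'$ remains a genuine tree-like polyomino — connected and acyclic — after pruning, which is automatic here because deleting leaves of a tree preserves the tree structure. Once the arithmetic is pinned to the explicit formula \eqref{defl2}, both contradictions are clean, and the proposition follows.
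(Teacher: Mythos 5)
Your proof is correct and follows essentially the same route as the paper: rule out $\deg_T(u)=2$ and $\deg_T(u)=3$ by pruning the adjacent leaves and comparing the resulting leaf count against the exact values $L_\squ(4k)=2k+1$ and $L_\squ(4k-1)=2k$ from \eqref{defl2}. If anything, your explicit arithmetic in the degree-$2$ case is a touch more self-contained than the paper's appeal to the fully-leafed property, but the argument is the same.
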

\begin{proof}
The proof is done by contradiction, using a similar argument to the one that is used in Lemma \ref{L:upper-2d}.
More precisely, suppose that $T$ is a saturated tree-like polyomino of minimal size $n(T)=4k+1$ and $n_1(T)=2k+2$ leaves  that contains at least one vertex $v$ of depth one such that $2\leq \deg(v)<4$. As illustrated in Figure   \ref{F:2d-case}, since $T$ is fully-leafed, it cannot contain a vertex of depth one and 
  degree $2$. If $\deg(v)=3$, then $v$ belongs to one of the three situations depicted in Figure   \ref{F:2d-case} (f), (g) and (i).
  If we remove the two  leaves adjacent to $v$ then $T$ looses two cells and one leaf which produces a tree that contradicts the leaf-maximality of the function $L_\squ$. So there is no vertex of depth one and degree $2$ or $3$ in $T$ and the proof is complete.
\end{proof}
\begin{corollary}
  Let $T$ be a saturated  tree-like polyomino of size $n(T)=4k+1$. Then $T$ is the graft union of $k$ saturated polyominoes of size $5$ shown in Figure~\ref{F:2d-case}(d), called \emph{crosses}.
\end{corollary}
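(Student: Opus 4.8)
The plan is to combine Proposition~\ref{P:satpoly} with the graft decomposition machinery to show that a saturated tree-like polyomino decomposes uniquely into crosses. First I would argue that every vertex of positive depth must in fact have depth exactly one. Indeed, if $T$ had a vertex of depth two or more, then (as in the proof of Lemma~\ref{L:upper-2d}) it would possess a vertex $u$ of depth one adjacent to an inner vertex $w$ that is not a leaf, so $u$ would have at most three neighbours that are leaves; but Proposition~\ref{P:satpoly} forces $\deg_T(u)=4$, meaning $u$ has four neighbours, and if one of them is the inner vertex $w$ then $u$ has only three leaf-neighbours, contradicting $\depth_T(u)=1$ combined with $\deg_T(u)=4$. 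I would make this precise by noting that a depth-one vertex of degree $4$ must have all but one of its neighbours as leaves, so the only way to have no vertex of depth $\geq 2$ is for the inner vertices to pair up appropriately.

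Next I would set up the combinatorial counting. Let $I$ be the set of inner vertices of $T$ and let $c$ be the number of depth-one vertices. By Proposition~\ref{P:satpoly} each depth-one vertex has degree $4$, hence is the centre of a cross of size $5$ (one centre plus four neighbours). Since $T$ is saturated, $n(T)=4k+1$ and $n_1(T)=2k+2$. The heart of the argument is a double-counting of incidences: I would count the total number of cells as $n(T)$, the number of leaves as $2k+2$, and the number of depth-one (degree-$4$) vertices, then show by the recurrence $\ell_\squ(n-4)+2$ that the parameters are forced so that $T$ is obtained by grafting exactly $k$ crosses. Concretely, I would argue that removing the four leaves of a single cross (equivalently, performing an inverse graft union with a cross) yields a smaller saturated tree-like polyomino of size $4(k-1)+1$, and then induct on $k$.

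The cleanest route is induction on $k$ via the graft union operation. I would pick a depth-one vertex $u$ of maximal depth-one distance from the "rest" of the tree — i.e. a cross whose centre $u$ is adjacent to only one non-leaf neighbour (such a cross always exists because $T$ is a finite tree, so some cross sits at an extremity). Writing the cross as a branch $C$ and its co-branch $C^c$, Lemma~\ref{L:graft} gives $n(C^c)=n(T)-4=4(k-1)+1$ and $n_1(C^c)=n_1(T)-2=2(k-1)+2$, so $C^c$ is again saturated of one smaller index. By the induction hypothesis $C^c$ is a graft union of $k-1$ crosses, and re-grafting $C$ exhibits $T$ as a graft union of $k$ crosses. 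The base case $k=1$ is the single cross of Figure~\ref{F:2d-case}(d).

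The main obstacle will be the first step: rigorously ruling out depth $\geq 2$ and, relatedly, verifying that an extremal cross with a single inner neighbour genuinely exists so that the inverse graft is well-defined (one must check that removing those four leaves leaves a connected tree whose centre $u$ becomes a leaf, and that the resulting polyomino is still \emph{saturated} rather than merely fully leafed). Proposition~\ref{P:satpoly} handles the local degree constraint, but translating "every depth-one vertex has degree $4$" into "the whole structure is a disjoint-at-the-leaves union of crosses glued along inner vertices" requires care to ensure the crosses tile $T$ without overlap beyond shared centres, which is precisely what the leaf/cell count $n_1(T)=2k+2$, $n(T)=4k+1$ enforces.
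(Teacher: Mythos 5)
Your operative argument (the third paragraph) is essentially the paper's own proof: by Proposition~\ref{P:satpoly} every depth-one vertex has degree $4$, so one peels off a terminal cross as a branch, uses Lemma~\ref{L:graft} for the cell/leaf bookkeeping, and inducts on $k$. However, your announced first step is false: a saturated tree-like polyomino of size $4k+1$ with $k\geq 2$ \emph{does} contain vertices of depth $2$ or more (in the $13$-cell example built from three crosses in a row, the two degree-$2$ connector cells have depth $2$ and the middle centre has depth $3$). The contradiction you describe does not exist: a degree-$4$ vertex with three leaf neighbours and one inner neighbour has depth exactly $1$ under the paper's recursive definition, with nothing to contradict. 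Fortunately the step is also unnecessary. For the induction you only need \emph{one} depth-one vertex $u$ whose unique non-leaf neighbour $w$ carries the rest of the tree; such a $u$ always exists because the tree obtained from $T$ by deleting all leaves is a nonempty tree and hence has a vertex of degree at most $1$, and for $k\geq 2$ Proposition~\ref{P:satpoly} then gives $u$ exactly three leaf neighbours and one inner neighbour $w$.

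The one point that genuinely needs an argument --- which you correctly flag as an obstacle but do not resolve, and which the paper's proof also passes over --- is that the residual polyomino obtained by deleting the four cells $\{u,\ell_1,\ell_2,\ell_3\}$ is again \emph{saturated}, i.e.\ that $w$ becomes a leaf, equivalently $\deg_T(w)=2$. This follows immediately from the observation in Section~\ref{S:prelem} that adjacent cells of a tree-like polyomino have degree sum at most $6$: since $\deg_T(u)=4$ and $w$ is adjacent to $u$, we get $\deg_T(w)\leq 2$, and $w$ is not a leaf, so $\deg_T(w)=2$. With that inserted, the residual unrooted polyomino has $4(k-1)+1$ cells and $2(k-1)+2$ leaves and the induction closes exactly as in the paper. (A small bookkeeping caveat: your claim $n(C^c)=n(T)-4$ conflicts with the convention of Lemma~\ref{L:graft}, under which the co-branch contains the cell $u$ as its distinguished leaf and therefore has $n(T)-3$ cells; it is the unrooted residual polyomino that has $n(T)-4$ cells.)
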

\begin{proof}
  The proof is done by induction on $k$.
  This is immediate for $k = 1$.
  For $k \geq 2$, it follows from Proposition~\ref{P:satpoly} that all vertices of $T$ of depth one have degree $4$, we have $T=B\graft T'$ for some branch $B$ of size $5$ and some saturated polyomino $T'$.
  By the induction hypothesis, $T'$ is the graft union of crosses so that $T$ is the graft union of crosses as well. 
\end{proof}
\begin{theorem}[Cross operator] \label{th:cross_squ}
There exists a bijection $\phi_\squ$ from the set $\treelikepolyset(k)$
of tree-like polyominoes of size $k$ and the set $\satpolyset(4k+1)$ of saturated tree-like polyominoes of size $4k+1$:
\begin{align*}
\treelikepolyset(k) \xrightarrow{\phi_\squ}
 \satpolyset(4k+1)
\end{align*}
\end{theorem}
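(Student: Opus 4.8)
The plan is to realize $\phi_\squ$ as the \emph{cross operator}. Given a free tree-like polyomino $P$ of size $k$, I fix a representative cell set $C \subseteq \Z^2$ and define $\phi_\squ(P)$ to be the polyomino whose cells are $\bigcup_{p \in C} X(2p)$, where $X(q) = \{q,\, q \pm e_1,\, q \pm e_2\}$ is the cross centered at $q$. Equivalently, since $P$ is a tree, $\phi_\squ(P)$ is the iterated graft union of $k$ crosses performed along the edges of $P$, exactly as in the corollary above. First I would check that this is well defined on isometry classes: the doubling map $p \mapsto 2p$ commutes with every isometry of $\Z^2$, so $\phi_\squ(I\cdot P)$ is isometric to $\phi_\squ(P)$ and the construction is independent of the chosen representative $C$. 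A direct count then yields the numerics: the $k$ crosses contribute $5k$ cells, two crosses $X(2p), X(2q)$ overlap precisely when $p,q$ are $4$-adjacent in $P$, in which case they share the single cell $p+q$, and distinct edges of $P$ produce distinct shared cells (each such cell has exactly one odd coordinate, which determines its edge). Hence there are exactly $k-1$ identifications and $n(\phi_\squ(P)) = 5k-(k-1) = 4k+1$. The same bookkeeping shows that the $2(k-1)$ arms consumed by these identifications become degree-$2$ cells, leaving $4k - 2(k-1) = 2k+2 = \overline{L}_\squ(4k+1)$ leaves, so $\phi_\squ(P)$ is saturated once it is known to be tree-like.

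The heart of the argument is to prove that $\phi_\squ(P)$ is acyclic, and I would isolate the geometric lemma that controls the interaction between crosses: for $p \ne q$ in $C$, the crosses $X(2p)$ and $X(2q)$ contain cells adjacent in $\grille$ if and only if $p$ and $q$ are $4$-adjacent in $P$, in which case their union is connected through the unique shared cell $p+q$ and has no other edge joining the two crosses. This follows from a short case analysis on the difference $2(p-q)$: since it has both coordinates even, while any displacement between a cell of $X(2p)$ and a cell of $X(2q)$ differs from an element of $\{0, \pm e_1, \pm e_2\}$ by such an even vector, an overlap or an extra edge can arise only when $2(p-q) \in \{\pm 2e_1, \pm 2e_2\}$, i.e.\ only for adjacent $p,q$. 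Granting this lemma, the crosses behave like the vertices of $P$: contracting each cross to a point recovers the adjacency graph of $P$, which is a tree, and since each cross is itself a tree meeting the remainder of $\phi_\squ(P)$ in a single cut vertex, an easy induction on $k$ (peeling off a leaf cross of $P$) shows that $\phi_\squ(P)$ is connected and acyclic. I expect this no-extra-adjacency lemma to be the main obstacle, since it is exactly where the tree-likeness of $P$ is converted into that of $\phi_\squ(P)$; the scaling factor $2$ is chosen precisely so that non-adjacent cells of $P$ yield crosses that neither overlap nor touch.

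Finally I would construct the inverse and verify the two compositions. By Proposition~\ref{P:satpoly} together with the corollary above, every $T \in \satpolyset(4k+1)$ is a graft union of $k$ crosses whose centers are exactly its degree-$4$ cells and whose pairwise identifications occur at its degree-$2$ cells; this decomposition is forced, hence unique. I therefore define $\psi_\squ(T)$ to be the polyomino of size $k$ with one cell per cross, two cells being $4$-adjacent exactly when the corresponding crosses share a degree-$2$ cell. The no-extra-adjacency lemma guarantees that this adjacency graph is the very tree organizing the crosses, so $\psi_\squ(T) \in \treelikepolyset(k)$, and comparing the contraction with the expansion gives $\psi_\squ \circ \phi_\squ = \mathrm{id}$ and $\phi_\squ \circ \psi_\squ = \mathrm{id}$. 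Since both maps are equivariant under isometry, they descend to mutually inverse maps between $\treelikepolyset(k)$ and $\satpolyset(4k+1)$, establishing the bijection.
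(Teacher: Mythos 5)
Your construction is exactly the paper's cross map: doubling the coordinates of the cells of $P$ is the paper's ``inflation'' step (inserting empty rows and columns), and taking the union of crosses $X(2p)$ is its ``cross production'' step, while your inverse (contracting each cross of the forced decomposition to a point) matches the paper's ``erase degree-$1$ and degree-$2$ cells, then delete empty rows and columns.'' The argument is correct and essentially identical in approach; you merely supply more detail than the paper does, in particular the no-extra-adjacency lemma and the induction establishing acyclicity, which the paper asserts without proof.
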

\begin{proof}
Start from a tree-like polyomino $T$ of size $k$ 
(see Figure \ref{Fig:crossmapsqu}$a)$). Inflation : between each pair of consecutive rows  of $T$, insert a new empty row.   Similarly, between each pair of consecutive columns  of the resulting rectangle, insert a new empty column. We obtain a disconnected set of cells $I(T)$ with nearest neighbours at distance two as shown in Figure $\ref{Fig:crossmapsqu}b)$. Cross production:  Fill  each empty unit square adjacent to a cell of $I(T)$ so that each cell of $I(T)$ has now degree $4$.  This new set of cells is connected and forms  a saturated tree-like polyomino of size $4k+1$ as shown in Figure \ref{Fig:crossmapsqu}$c)$. $\phi_\squ(T)$ is the sequence of these two transformations starting from $T$ and we call it the cross map.  It is obvious that $\phi_\squ$ in invertible i.e. starting from any saturated tree-like polyomino $S$ of size $4k+1$, we  can erase from $S$ all cells  of degree $1$ and $2$  and then remove empty columns and rows to obtain the corresponding tree-like polyomino $\phi_\squ^{-1}(S)$ of size $k$. The map $\phi_\squ$ is thus a bijection.
\end{proof}

\begin{figure}
  \centering
  \begin{tikzpicture}[scale=.6]
  \newcommand\poldegthreecell[2]{\polcell{#1}{#2}{coldeg3}}
  \newcommand\polleafcell[2]{\polcell{#1}{#2}{coldeg1}}
   \newcommand\poldegtwocell[2]{\polcell{#1}{#2}{coldeg2}}
  \newcommand\polcell[3]{\begin{scope}[scale=0.4, xshift=#1 cm , yshift= #2 cm] \draw[fill=#3] (0,0) -- (1,0) -- (1,1) -- (0,1) -- (0,0)  -- cycle; \end{scope}}
 
  \begin{scope}[yshift=-0.5cm]
    \poldegthreecell{0}{0}
    \poldegthreecell{0}{1} 
    \poldegthreecell{1}{1}
    
     \poldegthreecell{2}{1} 
     
   \poldegthreecell{2}{0}
  \poldegthreecell{2}{-1}   
   \poldegthreecell{1}{-1}
        
    \poldegthreecell{3}{1} 
    \poldegthreecell{4}{1} 
     \poldegthreecell{2}{0}

    \poldegthreecell{4}{0} 
     \poldegthreecell{4}{-1} 
    \poldegthreecell{4}{-2} 
     \poldegthreecell{4}{-3} 
      \poldegthreecell{4}{-4} 
       \poldegthreecell{3}{-4} 
        \poldegthreecell{2}{-4}
         
          \poldegthreecell{2}{-3}
          
        \poldegthreecell{1}{-4} 
        \poldegthreecell{0}{-4} 
        \poldegthreecell{-1}{-4} 
        
        \poldegthreecell{-1}{-3} 
      \poldegthreecell{-1}{-2}   
      \poldegthreecell{-1}{-1}
       \node at (0.5,-3.8) {(a) $T$};
  \end{scope}

     \begin{scope}[yshift=-2cm,xshift=3cm]
  
         \node at (0,0.5cm) { $ \leftrightarrow$};
  \end{scope}


\begin{scope} [xshift=5cm]
    \poldegthreecell{0}{0}
    \poldegthreecell{0}{2} 
    \poldegthreecell{2}{2}
    \poldegthreecell{4}{2} 
    
    \poldegthreecell{4}{0}  
   \poldegthreecell{4}{-2}  
  \poldegthreecell{2}{-2}
    
   \poldegthreecell{6}{2}
 \poldegthreecell{8}{2}   
 
   \poldegthreecell{8}{0}
    \poldegthreecell{8}{-2} 
    \poldegthreecell{8}{-4} 
    \poldegthreecell{8}{-6}

    \poldegthreecell{8}{-8} 
     \poldegthreecell{6}{-8} 
    \poldegthreecell{4}{-8} 
    
     \poldegthreecell{4}{-6} 
     
     \poldegthreecell{2}{-8} 
     \poldegthreecell{0}{-8} 
     \poldegthreecell{-2}{-8}

    \poldegthreecell{-2}{-6}
    \poldegthreecell{-2}{-4} 
    \poldegthreecell{-2}{-2} 

       \node at (1.3,-4.3) {(b) $I(T)$};
  \end{scope}

 \begin{scope}[yshift=-2cm,xshift=9.5cm]
  
         \node at (0,0.5cm) { $ \leftrightarrow$};
  \end{scope}

\begin{scope} [xshift=12cm]
    \poldegthreecell{0}{0}
    \poldegthreecell{0}{2} 
    \poldegthreecell{2}{2}
    \poldegthreecell{4}{2} 
    
    \poldegthreecell{4}{0}  
   \poldegthreecell{4}{-2}  
  \poldegthreecell{2}{-2}
    
   \poldegthreecell{6}{2}
 \poldegthreecell{8}{2}   
 
  \polleafcell{0}{3} 
  \polleafcell{2}{3}
   \polleafcell{4}{3}
\polleafcell{6}{3}
 \polleafcell{8}{3} 

 \polleafcell{1}{0}  
 \polleafcell{2}{1}  
  \polleafcell{2}{-1}  

 \poldegtwocell{1}{2}
 \poldegtwocell{3}{2}
 \poldegtwocell{5}{2}
 \poldegtwocell{7}{2}
 
  \polleafcell{3}{0}
 
 \poldegtwocell{4}{1}
 \poldegtwocell{4}{-1}
 \poldegtwocell{3}{-2}
  
   \poldegthreecell{8}{0}
    \poldegthreecell{8}{-2} 
    \poldegthreecell{8}{-4} 
    \poldegthreecell{8}{-6}

 
  \polleafcell{9}{2} 
  \polleafcell{9}{0}
   \polleafcell{9}{-2}
    \polleafcell{9}{-4}
     \polleafcell{9}{-6} 
     \polleafcell{9}{-8}

    \poldegthreecell{8}{-8} 
     \poldegthreecell{6}{-8} 
    \poldegthreecell{4}{-8} 
    
     
     \poldegthreecell{2}{-8} 
     \poldegthreecell{0}{-8} 
     \poldegthreecell{-2}{-8}
 
     
  \polleafcell{8}{-9} 
  \polleafcell{6}{-9}
   \polleafcell{4}{-9}
    \polleafcell{2}{-9}
     \polleafcell{0}{-9} 
     \polleafcell{-2}{-9}

\poldegtwocell{7}{-8}
\poldegtwocell{5}{-8}
\poldegtwocell{3}{-8}
\poldegtwocell{1}{-8}
\poldegtwocell{-1}{-8}
     
\poldegtwocell{8}{-7}
\polleafcell{6}{-7}
\polleafcell{4}{-7}
\polleafcell{2}{-7}
\polleafcell{0}{-7}
\poldegtwocell{-2}{-7}

    \poldegthreecell{-2}{-6}
    \poldegthreecell{-2}{-4} 
    \poldegthreecell{-2}{-2} 
    
     
  \polleafcell{-3}{-8} 
  \polleafcell{-3}{-6}
   \polleafcell{-3}{-4}
 \polleafcell{-3}{-2}
 \polleafcell{-2}{-1}
    
     \polleafcell{-1}{0} 
     \polleafcell{-1}{2}
      \polleafcell{-1}{-2}
       \polleafcell{2}{-3}
   \polleafcell{4}{-3}
   \polleafcell{5}{-2}
   \polleafcell{5}{0}
  \polleafcell{6}{1}
  \polleafcell{7}{0}
  \polleafcell{7}{-2}
  \polleafcell{7}{-4}  
 \polleafcell{7}{-6}

\poldegtwocell{8}{1}
\poldegtwocell{8}{-1}
\poldegtwocell{8}{-3}
\poldegtwocell{8}{-5}

 \polleafcell{-1}{-6} 
  \polleafcell{-1}{-4}
   \polleafcell{-1}{-2}
   \polleafcell{0}{-1}
   \polleafcell{1}{-2}

   \poldegtwocell{-2}{-5}
 \poldegtwocell{-2}{-3}
  \poldegtwocell{0}{1}

       \node at (1.5,-4.3) {(c) $\phi_{squ}(T)$};
  \end{scope}

\end{tikzpicture}
  \caption{The cross map $\phi_\squ$ for tree-like polyominoes}
  \label{Fig:crossmapsqu}
\end{figure}


From an enumeration point of view, Theorem \ref{th:cross_squ} informs us that counting saturated polyominoes of size $4k+1$ is precisely the same as counting the number of tree-like polyominoes of size $k$. It would be interesting to obtain a similar information on fully leafed tree like polyominoes that are not saturated.
Unfortunately, we do not have a complete answer at this time.
\subsection{Saturated Tree-Like Polyhexes and Polyiamonds}
\label{S:Saturatedpolyhex}
Since the leaf functions of tree-like polyhexes and polyiamonds are equal,
the bounding linear functions   $\overline{L_\hex(n)}$  and  $\overline{L_\tri(n)}$  are also equal: 
\begin{align*}
\overline{L}_\hex(n)=\overline{L}_\tri(n)=\frac{n+2}{2} \quad \text{and} \quad
\underline{L}_\hex(n)=\underline{L}_\tri(n)=\frac{n+1}{2}.
\end{align*}
This implies that for $k\geq 1$, saturated polyhexes and polyiamonds $T$ with $k$ inner cells have even size $n(T)=2k$ and that their number of leaves is $n_1(T)=k+1$.

\begin{proposition}
\label{P:satpolyhex}
Let $T$ be a saturated tree-like polyhex or polyiamond and let $u$ be a vertex of depth one of $T$. Then  $deg_T(u)=3$
\end{proposition}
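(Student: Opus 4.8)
The plan is to argue by contradiction following exactly the template of Proposition~\ref{P:satpoly}, since the situation for polyhexes and polyiamonds is the direct analogue. I would first recall the structural data: for saturated tree-like polyhexes and polyiamonds of size $n(T)=2k$ we have $n_1(T)=k+1$ leaves and exactly $k$ inner cells, each of depth one. Because $T$ is fully leafed, Theorem~\ref{leaf_function_hex} forces every inner cell to carry as many leaves as possible. I would take a supposed saturated $T$ of minimal size possessing a depth-one vertex $u$ with $\deg_T(u)\neq 3$, and derive a contradiction by producing a smaller tree-like polyform that violates the leaf-maximality of $L_\hex$ (respectively $L_\tri$).

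The key observation to exploit is the degree constraint: in a tree-like polyhex or polyiamond a depth-one vertex $u$ satisfies $\deg_T(u)\in\{2,3\}$. Indeed the maximum degree is $3$ in both lattices (each cell has only $3$ neighbours in the triangular lattice, and although a hexagonal cell has $6$ neighbours, the acyclicity together with the inequality $\deg_T(u)+\deg_T(v)\leq 6$ for adjacent cells limits the relevant degrees). So the only case to rule out is $\deg_T(u)=2$. In that case $u$ has exactly one leaf neighbour. First I would remove that single leaf to obtain $T'$ with $n(T')=n(T)-1$ and $n_1(T')=n_1(T)$ unchanged (the vertex $u$ becomes a leaf of $T'$ only if its remaining neighbour makes it degree one, but in general $u$ stays an inner vertex), and compare against $\ell_\hex$. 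Since $n_1(T)=\ell_\hex(n(T))$ and $\ell_\hex$ is non-decreasing with $\ell_\hex(n)\geq\ell_\hex(n-1)$, having $n_1(T')=n_1(T)=\ell_\hex(n(T))\geq\ell_\hex(n(T'))$ with one fewer cell already pushes $T'$ to or above the maximum, and by tracking the saturation inequality $n_1\geq\overline L(n)$ I would show $T'$ exceeds $\overline L$, contradicting either the fully-leafed bound or minimality.

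The slightly delicate point, and the one I expect to be the main obstacle, is handling the bookkeeping so that the removal strictly violates a bound rather than merely meeting it. The cleanest route is to phrase everything through the saturation ratio: a saturated polyhex lies on the upper line $\overline L_\hex(n)=(n+2)/2$, so $n_1(T)=(n(T)+2)/2$. Removing a leaf from a degree-two depth-one vertex yields $n_1(T')=n_1(T)=(n(T)+2)/2=((n(T')+1)+2)/2=(n(T')+3)/2>\overline L_\hex(n(T'))$, which is impossible since no tree-like polyhex can have more leaves than $\overline L_\hex$ prescribes as the upper bounding line — more precisely it contradicts $L_\hex(n(T'))\leq\ell_\hex(n(T'))=\lfloor n(T')/2\rfloor+1$. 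I would spell out this single inequality carefully for both lattices at once, noting that the triangular case is identical because $\overline L_\tri=\overline L_\hex$ and $\ell_\tri=\ell_\hex$, so no separate argument is needed. This completes the contradiction and forces $\deg_T(u)=3$ for every depth-one vertex.
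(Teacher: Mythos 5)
Your proposal is correct and follows essentially the same route as the paper: argue by contradiction that a depth-one vertex of degree $2$ would allow removal of its adjacent leaf, producing a polyform $T'$ with $n_1(T')=n_1(T)=k+1>k=L(2k-1)$, contradicting the leaf function of Theorem~\ref{leaf_function_hex}. The only cosmetic difference is that you phrase the final inequality through the bounding line $\overline{L}_\hex$ rather than directly through the recurrence \eqref{recurr_hex}, which amounts to the same computation.
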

\begin{proof}
We proceed by contradiction. We know that any saturated tree-like polyhex or polyiamond $T$ have size $n(T)=2k$, for some integer $k>0$.
  Let $L(n)$ denote the leaf function of polyhexes and polyiamonds.
  Suppose that $T$ contains one vertex $v$ of depth one such that $\deg(v)=2$. Since $T$ is fully leafed, if $T'$ is the result of removing the leaf adjacent to $v$ then $n_1(T')=n_1(T)>L(2k-1)=n_1(T)-1$ which contradicts Equations \ref{recurr_hex} and \ref{triang_rec}. 
\end{proof}

\begin{corollary}
  \label{C:satpolyhex}
Let $T$ be a saturated tree-like polyhex or polyiamond. Then all inner cells of $T$ have degree $3$.
\end{corollary}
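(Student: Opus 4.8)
The plan is to upgrade Proposition~\ref{P:satpolyhex}, which controls only the cells of depth one, into a statement about \emph{all} inner cells, and the cleanest route is a global degree count rather than a local peeling argument. The first ingredient I would isolate is the fact that in \emph{any} tree-like polyhex or polyiamond every cell has degree at most $3$. For polyiamonds this is immediate, since each triangular cell has only three lattice neighbours. For polyhexes I would argue as follows: the six neighbours of a hexagonal cell, lying in the directions $\theta = k\pi/3$, are pairwise adjacent exactly when they are consecutive (two neighbours at angular separation $\pi/3$ lie at Euclidean distance $\sqrt3$, whereas those at separation $2\pi/3$ lie at distance $3$), so under the $6$-adjacency relation they form a $6$-cycle. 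Since $T$ is acyclic, the neighbours of a fixed cell that actually belong to $T$ must constitute an independent set of this $6$-cycle, and such a set has at most $3$ elements; hence $\deg_T(u)\le 3$ for every cell $u$.

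With the bound $\deg_T \le 3$ in hand, I would combine it with the numerology recorded just above the corollary. A saturated $T$ has size $n(T)=2k$ and $n_1(T)=k+1$ leaves, so its number of inner cells is $n(T)-n_1(T)=k-1$. Because $T$ is a tree, its degree sum equals $2(n(T)-1)=4k-2$; subtracting the contribution $k+1$ of the leaves leaves $3k-3=3(k-1)$ for the inner cells. Thus the $k-1$ inner cells carry total degree $3(k-1)$, i.e. their average degree is exactly $3$. Each inner cell has degree at least $2$ (it is not a leaf) and at most $3$ (by the first step), so an average of $3$ forces every inner cell to have degree exactly $3$, which is the claim. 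The case $k=1$ is vacuous, since then $T$ consists of two cells and has no inner cell.

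I expect the only genuinely delicate point to be the maximum-degree step for polyhexes, since it rests on the geometry of the hexagonal lattice (the $6$-cycle formed by the neighbours of a cell) rather than on anything already established; once that is settled the argument reduces to a one-line averaging. It is worth noting why I avoid imitating the inductive, graft-union proof used in the polyomino corollary: removing a depth-one cell together with its two pendant leaves decreases the size by $3$ and therefore breaks the parity $n(T)=2k$ required for saturation, so no clean peeling into smaller saturated polyhexes is available, and the global count is the natural substitute. Proposition~\ref{P:satpolyhex} is then recovered as the special case of the corollary restricted to depth-one cells.
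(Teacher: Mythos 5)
Your proof is correct, but it takes a genuinely different route from the paper's. The paper's proof is a two-line induction on $n(T)$: by Proposition~\ref{P:satpolyhex} a depth-one cell $u$ has degree $3$; removing its two pendant leaves turns $u$ into a leaf and yields a polyform of size $2k-2$ with $k$ leaves, which is again saturated since $\overline{L}(2k-2)=k$, so the induction hypothesis applies to the remaining inner cells. You instead give a global counting argument: first you establish that $\deg_T\le 3$ in \emph{any} tree-like polyhex (via the observation that the six lattice neighbours of a cell form a $6$-cycle under the adjacency relation, so that in an induced subtree the neighbours present must be an independent set of that cycle, of size at most $3$) and in any polyiamond (trivially); then the handshake identity $\sum\deg = 2(2k-1)$ forces the $k-1$ inner cells to carry total degree $3(k-1)$, and the bounds $2\le\deg\le 3$ pin every inner cell at exactly $3$. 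Your version is self-contained --- it does not use Proposition~\ref{P:satpolyhex} at all and in fact recovers it as a corollary --- at the cost of having to make explicit the maximum-degree bound for polyhexes, which the paper never states but tacitly assumes. One small correction: your stated reason for rejecting the peeling strategy (that removing a depth-one cell together with its two leaves drops the size by $3$ and breaks parity) attacks the wrong peeling; the paper removes only the two leaves, which preserves parity and saturation, so the inductive route is in fact available. This does not affect the validity of your argument.
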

\begin{proof}
This result is immediate by induction on $n(T)$ and Proposition \ref{P:satpolyhex}.
\end{proof}

Define \emph{linear polyhex} (respectively \emph{polyiamond}) as saturated structures similar to the one presented in Figure \ref{F:polyhex-even} (b) (resp. Figure \ref{F:polyiamonds-even} (b)), i.e., where the set of inner cells of degree are placed in two staggered rows.
In this subsection we show that saturated tree-like polyhexes and polyiamonds are linear and we exhibit a bijection between  the two corresponding sets of free polyforms.  Unfortunately, we do not have yet a completely unified argument for the enumeration of the two sets. We shall thus proceed separately for the enumeration of free saturated polyhexes and polyiamonds.

In the set $\Hex_i(n)$ of free tree-like polyhexes of size $n$, there exist forbidden patterns for fully leafed tree-like polyhexes that restrict their shape.
\begin{proposition}\label{prop:hex}
Let $T$ be a fully leafed tree-like polyhex.
\begin{enum}
  \item Any connected subset  of $T$ of three cells of degree three cannot form a straight line along one of the three axes of $Hex$ (see Figure \ref{Fig:Forbdeg3}$(a)$). 
  \item Any connected subset  of $T$ of four cells  of degree three  cannot form a walk that is the result of  three consecutive steps with directions 
$$\vv{v_\theta},   \vv{v}_{\theta+\pi/3},  \vv{v}_{\theta+2\pi/3} \quad\mbox{  or  }\quad
\vv{v_\theta},   \vv{v}_{\theta-\pi/3},  \vv{v}_{\theta-2\pi/3}
$$
for any initial direction $\theta$ (see Figure \ref{Fig:Forbdeg3}$(b)$).
  \item If $T$ is saturated and $n_1(T)>10$ then $T$ is linear as shown in Figure~\ref{F:polyhex-even} $(b)$.
\end{enum}
\end{proposition}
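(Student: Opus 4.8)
My plan is to derive all three parts from two elementary features of the lattice, using full leafedness only for (iii). The first feature is that a tree-like polyhex is an \emph{induced} subgraph of $\Hex$, so any two of its cells lying at distance $\sqrt3$ (i.e.\ lattice neighbours) are automatically joined by an edge of $T$; consequently one cannot ``omit'' an edge in order to break a cycle. The second is purely local: two neighbours of a common cell, sitting in directions $\vv{v_\theta}$ and $\vv{v_{\theta'}}$, are themselves at distance $\sqrt3$ exactly when $\theta-\theta'\equiv\pm\pi/3$. Hence a degree-$3$ cell of a tree-like polyhex must have its three neighbours in directions $\{\theta,\theta+2\pi/3,\theta+4\pi/3\}$, since any other choice puts two neighbours at distance $\sqrt3$ and closes a triangle. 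Thus every degree-$3$ cell has one of two \emph{orientations}, and two adjacent degree-$3$ cells always have opposite orientations. Note that acyclicity alone gives (i) and (ii); full leafedness enters only in (iii).

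For (i) I would show the middle cell is forced to have degree $2$. If $A,B,C$ are collinear along an axis with $B$ between $A$ and $C$ (Figure~\ref{Fig:Forbdeg3}(a)), then $B$ is adjacent to $A$ and $C$ in the opposite directions $\vv{v_{\theta+\pi}}$ and $\vv{v_\theta}$; a putative third neighbour of $B$ would lie in one of the four directions $\theta\pm\pi/3$ and $\theta+\pi\pm\pi/3$, each at distance $\sqrt3$ from $A$ or from $C$, hence joined to it by an edge and producing a triangle. So $B$ has degree $2$ and the three cells cannot all be of degree $3$. For (ii), write the four cells as $B=A+\vv{v_\theta}$, $C=B+\vv{v_{\theta+\pi/3}}$, $D=C+\vv{v_{\theta+2\pi/3}}$. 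Because $A$ and $D$ have degree $3$, their orientations are pinned by the edges $AB$ and $CD$; in particular $A$ has a neighbour $a=A+\vv{v_{\theta+2\pi/3}}$ and $D$ has a neighbour $d=D+\vv{v_{\theta+\pi}}$. A short computation gives $\dist(a,d)=\sqrt3$, so $a$ and $d$ are joined by an edge and $A,B,C,D,d,a$ form a $6$-cycle, contradicting acyclicity. The mirror walk is handled identically.

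For (iii) I would first invoke Corollary~\ref{C:satpolyhex}: every inner cell of a saturated $T$ has degree $3$ and $T$ has no degree-$2$ cell, so the inner cells form a subtree. Walking along any path of inner cells, the orientation alternation forces every consecutive pair of moves to differ by $\pm\pi/3$; part~(i) forbids repeating a move, and part~(ii) forbids two consecutive turns of the same sign. Hence the turns must strictly alternate between $+\pi/3$ and $-\pi/3$, which is precisely the staircase of Figure~\ref{F:polyhex-even}(b). It then remains to rule out \emph{branch cells}, i.e.\ inner cells with three inner neighbours: the plan is to apply (ii) to the three walks passing through such a cell and show that the continuation directions they impose on the branches are mutually inconsistent, so no branch can extend past the branch cell. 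The only configuration surviving this is the $10$-cell polyhex of Figure~\ref{F:polyhex-even}(a), which has $n_1=6$. Therefore, as soon as $n_1(T)>10$ there is no branch cell, the inner tree is a path, and $T$ is linear; the finitely many saturated polyhexes with $n_1\le 10$ are settled by direct inspection.

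The routine parts are the two distance computations (the triangle in (i) and the $6$-cycle in (ii)). The genuine obstacle is the branch-cell exclusion in (iii): it requires tracking the $\pm\pi/3$ turning constraint simultaneously along all three branches meeting at a vertex, showing the constraints on the two outward branches through a common arm are contradictory, and then matching the residual bounded cases against Figure~\ref{F:polyhex-even}(a).
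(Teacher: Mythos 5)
Your proposal is correct and follows essentially the same route as the paper's (very terse) proof: parts (i) and (ii) from local geometric/acyclicity constraints of the hexagonal lattice, and part (iii) from the forbidden patterns forcing alternating turns along the path of inner cells, with the branched configuration of Figure~\ref{F:polyhex-even}(a) shown to be terminal. Your write-up merely supplies the details the paper leaves implicit (the orientation classes of degree-$3$ cells, the explicit $6$-cycle in (ii), and the verification that a branch cell cannot be extended), and I checked that the branch-cell exclusion you flag as the main obstacle does go through by applying (ii) to the walk through the branch cell and either of its two sibling arms.
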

\begin{proof}
(i) and (ii) are immediate from geometric constraints (see Figure \ref{Fig:Forbdeg3}). 

(iii) is deduced from the fact that, because of forbidden patterns,  the tree in Figure \ref{F:polyhex-even} (a) is terminal in the sense that no cell of degree three can be added to form a saturated tree. From that observation  it follows that any polyhex $T$ with  more than $4$ inner cells of degree three is linear. 

\begin{figure}
  \centering
  \begin{tikzpicture}[scale=.6]
  \newcommand\hexintcell[2]{\hexcell{#1}{#2}{coldeg3}}
  \newcommand\hexleafcell[2]{\hexcell{#1}{#2}{coldeg1}}
   \newcommand\hexdegtwocell[2]{\hexcell{#1}{#2}{coldeg2}}
  \newcommand\hexcell[3]{\begin{scope}[scale=0.3, xshift=#1 cm * 2 * 0.86602540378 + 0.86602540378 * 2 * 0.5 * #2 cm, yshift=1.5 * #2 cm] \draw[fill=#3] (30:1cm) -- (90:1cm) -- (150:1cm) -- (210:1cm) -- (270:1cm) -- (330:1cm) -- cycle; \end{scope}}
  \newcommand\hexdots[2]{\begin{scope}[scale=0.3, xshift=#1 cm * 2 * 0.86602540378 + 0.86602540378 * 2 * 0.5 * #2 cm, yshift=1.5 * #2 cm] \draw[draw, very thin] (30:1cm) -- (90:1cm) -- (150:1cm) -- (210:1cm) -- (270:1cm) -- (330:1cm) -- cycle; \node at (0,0) {$\ldots$}; \end{scope}}
  
     \begin{scope}[xshift=0 cm]
      \hexintcell{0}{1}
    \hexintcell{0}{0}
    \hexintcell{0}{2}
    \node at (0,-1) {(a)};
  \end{scope}

     \begin{scope}[xshift=6cm]
      \hexintcell{0}{1}
    \hexintcell{0}{0}
    \hexintcell{-1}{2}
    \hexintcell{-2}{2}   
    \node at (0,-1) {(b)};
  \end{scope}
  \end{tikzpicture}
  \caption{Forbidden patterns of cells in fully leafed polyhexes}
   \label{Fig:Forbdeg3}
\end{figure}

\end{proof}

Proposition \ref{prop:hex} provides a characterization of fully leafed polyhexes.
Since fully leafed polyhexes have zero or one cell of degree two, we have
\begin{align*}
n_2(T)=&0 \Leftrightarrow n(T)=2n_3(T)+2 \mbox{ and $T$ is saturated},\\
n_2(T)=&1 \Leftrightarrow n(T)=2n_3(T)+3.
\end{align*}
for any fully leafed polyhex $T$. The sets $\Hex_t(n)$ and $\Hex_i(n)$ of respectively fixed and free tree-like polyhexes of size $n$ have been well-studied in \cite{hr}.

In the set $\Tri_i$ of free polyiamonds, it is the small maximal value of the degree of a cell that imposes important restrictions on the geometry of saturated polyiamonds. 

\begin{proposition}\label{P:Polyiamondscaterpillar}
  Let $T$ be a saturated, tree-like polyiamond of size $n(T)=2k$ different from $10$. Then $T$ is a caterpillar polyiamond.
\end{proposition}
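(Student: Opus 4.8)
The plan is to reduce the statement to ruling out a single local configuration, and then to eliminate that configuration by an acyclicity argument that simultaneously isolates the size-$10$ exception. First I would exploit Corollary~\ref{C:satpolyhex}: every inner cell of a saturated polyiamond has degree exactly $3$. Since the triangular lattice has $3$-adjacency, an inner cell therefore has \emph{all three} of its lattice neighbours present in $T$, and in particular $T$ has no cell of degree $2$ (each cell is either a leaf or an inner cell). The inner cells $I$ induce a subtree $T[I]$, and because $\deg_T(u)=3$ for every $u \in I$, the degree of $u$ inside $T[I]$ equals $3$ minus the number of leaves attached to $u$. Hence $T$ is a caterpillar if and only if no inner cell has all three of its neighbours inner; I will call such a cell a \emph{branch cell}. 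It thus suffices to prove the contrapositive: if $T$ contains a branch cell, then $T$ is exactly the polyiamond of Figure~\ref{F:polyiamonds-even}(d), whose size is $10$.

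Next I would set up the local picture. Suppose $c$ is a branch cell with inner neighbours $d_1, d_2, d_3$. Up to an isometry of the lattice, $c$ is an upward triangle and $d_1, d_2, d_3$ are the three downward triangles glued along its edges, producing the central ``triforce'' of Figure~\ref{F:polyiamonds-even}(d). Each $d_i$ is inner, so it has degree $3$ and therefore two further neighbours besides $c$, all of which lie in $T$ and are each a leaf or an inner cell. The heart of the argument is to show that these six outer neighbours are all forced to be leaves.

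The crux is a cycle-forcing step. I would fix a lattice vertex $P$ common to two edges of $c$, say the edges leading to $d_1$ and $d_2$. The six triangles surrounding $P$ form a hexagonal $6$-cycle $d_1 - c - d_2 - w_1 - w_2 - w_3 - d_1$ in which consecutive cells are adjacent. Here $w_1$ is a neighbour of $d_2$ and $w_3$ a neighbour of $d_1$, so both already belong to $T$ (as $d_1,d_2$ are inner), whereas the ``middle'' cell $w_2$ does not belong to the triforce. Because $T$ is connected and $c, d_1, d_2, d_3$ form its centre, any enlargement of $T$ beyond the triforce must promote some outer neighbour of some $d_i$ to an inner cell; but promoting $w_3$ (say) to degree $3$ forces $w_2 \in T$, completing the hexagon into a cycle of length $6$ and contradicting the acyclicity of $T$. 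Consequently every non-$c$ neighbour of $d_1, d_2, d_3$ must be a leaf, pinning $T$ down to the configuration with $4$ inner and $6$ leaf cells, i.e.\ $n(T)=10$.

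I expect the main obstacle to be the lattice-specific verification underlying this last step: one must confirm that the six triangles around $P$ are precisely $c, d_1, d_2$ together with the three outer cells in the stated adjacency pattern, and that promoting \emph{any} outer neighbour genuinely closes this hexagon rather than escaping it. This is exactly where the argument becomes dependent on the triangular lattice and explains why the ``propeller'' of size $10$ is the unique exception. I would also record that no tail can be appended to the triforce, since all inner cells are already fully surrounded and attaching a cell to a leaf would create a forbidden degree-$2$ cell; together with the cycle argument this shows that the presence of a branch cell forces $T$ to be precisely the size-$10$ polyiamond, completing the contrapositive and hence the proof.
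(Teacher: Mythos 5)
Your proof is correct and follows essentially the same route as the paper: both reduce the claim to the unique non-caterpillar obstruction, namely the size-$10$ ``propeller'' of Figure~\ref{F:polyiamonds-even}(d) generated by an inner cell whose three neighbours are all inner, and then show that this configuration cannot be extended inside a saturated polyiamond. Your write-up is actually more explicit than the paper's, which merely asserts that the configuration ``cannot be extended without including a cell of degree $2$''; your $6$-cycle argument around a lattice vertex supplies the acyclicity half of that assertion that the paper leaves implicit.
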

\begin{proof}
  All polyiamonds of size smaller than $10$ are obtained through a simple exploration.
 For $n(T)>10$,  We first show that $T$ must be a caterpillar by observing that the only non-caterpillar  tree-like polyiamond is shown in Figure \ref{F:polyiamonds-even} (d).
 This polyiamond cannot be extended without including a cell of degree $2$, contradicting corollary \ref{C:satpolyhex} so that all saturated tree-like polyiamonds of size $n(T)>10$ must be caterpillars. 
  It is then easy to observe that all saturated caterpillars are linear for otherwise they would contain a polyiamond isometric to the one shown in Figure \ref{F:polyiamonds-even} (d) which would forbid them from being  caterpillars.
\end{proof}
The last proposition completely defines the geometric structure of saturated polyiamonds of size different from $10$ which provides a singular counterexample.  This allows their complete enumeration which happens to be identical to that of polyhexes.

\begin{proposition}\label{prop:enumhext}
For positive integers $k$, the numbers $shex_t(2k)$ and  $stri_t(2k)$ 
of fixed saturated fully leafed polyhexes and polyiamonds of size $2k$ are  given by the single expression
\begin{align*}
shex_t(2k)=stri_t(2k)=\begin{cases}
3 & \mbox{ if } k=1,3,\\
2 & \mbox{ if } k=2,\\
8 & \mbox{ if } k=5.\\
6 & \mbox{ otherwise }
\end{cases}
\end{align*}
\end{proposition}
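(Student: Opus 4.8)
The plan is to turn the enumeration into an orbit count. First I would record the numerology forced by the displayed equivalences preceding the statement: a saturated tree-like polyhex or polyiamond of size $2k$ has $n_2 = 0$, exactly $n_3 = k-1$ inner cells (all of degree $3$ by Corollary~\ref{C:satpolyhex}), and $n_1 = k+1$ leaves. Hence the combinatorial type of such a polyform is entirely encoded by the placement of its spine of $k-1$ inner cells together with the forced leaves completing each inner cell to degree $3$. The problem then splits into (a) listing the \emph{free} shapes compatible with the forbidden patterns of Proposition~\ref{prop:hex} and the caterpillar constraint of Proposition~\ref{P:Polyiamondscaterpillar}, and (b) passing from each free shape to a count of \emph{fixed} shapes by dividing the order $12$ of the lattice point group $D_6$ by the order of the shape's symmetry group (its stabilizer in $D_6$).

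For the generic range I would argue as follows. Proposition~\ref{prop:hex}(i)--(ii) forces every turn of the spine to be $\pm 60^\circ$ and forbids two consecutive turns of the same sign; hence the turns must strictly alternate, so the spine is the unique ``staggered double row'' of Figure~\ref{F:polyhex-even}(b), and likewise for polyiamonds by Proposition~\ref{P:Polyiamondscaterpillar}. This free shape is unique for every admissible length, and I would check that its symmetry group in $D_6$ is exactly an order-$2$ group (the half-turn about the centre of the spine for an even number of inner cells, a single reflection for an odd number); consequently it contributes $12/2 = 6$ fixed polyforms. This accounts for the value $6$ at $k=4$ and for all $k \geq 6$.

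The remaining values come from the small and exceptional cases, each handled by the same orbit count. For $k=1$ the polyform is a domino (a rhombus in the polyiamond case), whose symmetry group has order $4$, giving $12/4 = 3$. For $k=2$ it is a claw whose three leaves occupy an independent triple of neighbours; this shape is stabilised by the order-$6$ group $D_3$, giving $12/6 = 2$. For $k=3$ a direct enumeration of the admissible leaf placements on the two adjacent inner cells gives $3$. Finally, for $k=5$ (size $10$) there is, besides the linear shape, the extra $3$-fold symmetric configuration of Figure~\ref{F:polyhex-even}(a) and Figure~\ref{F:polyiamonds-even}(d): a central inner cell joined to an independent triple of inner cells, each carrying two leaves. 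Its symmetry group is $D_3$ of order $6$, contributing $12/6 = 2$, so the total is $6 + 2 = 8$. Since both lattices have point group $D_6$ and the classification produces structurally identical free shapes with identical stabilizers, one obtains $shex_t(2k) = stri_t(2k)$ in every case; if desired this can be upgraded to an explicit bijection matching a fixed polyhex with the polyiamond of the same combinatorial type and orientation class.

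The hard part, and where I would spend the most care, is the stabilizer bookkeeping: verifying that the linear shape has symmetry of order \emph{exactly} $2$ for every length, ruling out an accidental reflection that would collapse $6$ to $3$, and doing so uniformly in the parity of the number of inner cells, since the two ends of the spine and the alternating leaf pattern interact with reflections differently in the even and odd cases. The exceptional $k=5$ shape and the small cases $k=1,2,3$ must be isolated explicitly, and one must confirm that the $3$-fold symmetric configuration is saturated and genuinely fails to be linear in \emph{both} lattices, so that it is counted once with the correct stabilizer $D_3$; this is precisely the content of the ``terminal'' observation in the proof of Proposition~\ref{prop:hex}(iii) and of Proposition~\ref{P:Polyiamondscaterpillar}.
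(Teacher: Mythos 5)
Your proposal is correct and follows essentially the same route as the paper: reduce to the unique linear (zigzag) caterpillar via Propositions~\ref{prop:hex} and~\ref{P:Polyiamondscaterpillar}, verify $k=1,2,3$ directly, add the two $3$-fold symmetric fixed polyforms at $k=5$, and obtain $6$ in the generic case from the orientations of the linear shape. Your explicit orbit--stabilizer bookkeeping ($12/|\mathrm{stab}|$ in the point group $D_6$) is simply a more systematic rendering of the paper's direct count of ``six possible positions,'' and your flagged concern about verifying the stabilizer is exactly of order $2$ is the one point the paper also leaves to inspection.
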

\begin{proof} 
  From Propositions \ref{prop:hex} $(iii)$ and \ref{P:Polyiamondscaterpillar}, we know that saturated tree-like polyhexes and polycubes of size $2k$ are caterpillars except for $k=5$.
  The cases $k=1,2,3$ are easy to verify.
  The exceptional case $k=5$ comes from the addition of the polyforms shown in Figures \ref{F:polyhex-even}$(a)$ and \ref{F:polyiamonds-even}$(d)$  which provide $2$ additional fixed polyforms 
  to the set of linear caterpillars. 
  The value $shex_t(2k)=stri_t(2k)=6$ for the remaining values of $k$ is the result of the six possible positions of fixed linear caterpillars similar to Figures \ref{F:polyhex-even}$(b)$ and \ref{F:polyiamonds-even}$(d)$. 
\end{proof}
\begin{corollary}
For positive integers $k$, the numbers $shex_i(2k)$ and $stri_i(2k)$ of free saturated tree-like polyhexes and polyiamonds of size $2k$ are given by the single expression
\begin{align*}
shex_i(2k)=stri_i(2k)=\begin{cases}
2 & \mbox{ if } k=5.\\
1 & \mbox{ otherwise }
\end{cases}
\end{align*}
\end{corollary}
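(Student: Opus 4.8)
The plan is to pass from the \emph{fixed} counts established in Proposition~\ref{prop:enumhext} to the \emph{free} counts by quotienting out the point group of the lattice, using the geometric classification already in hand. Recall that a free polyform is an isometry class of fixed polyforms, so $shex_i(2k)$ and $stri_i(2k)$ simply count the distinct shapes of saturated polyhexes (resp.\ polyiamonds) of size $2k$ up to rotation and reflection. It therefore suffices to enumerate these shapes modulo isometry, and the bookkeeping of orientations that produced the fixed counts $6$, $8$, $3$, $2$ in Proposition~\ref{prop:enumhext} can be discarded.

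First I would invoke Propositions~\ref{prop:hex}(iii) and~\ref{P:Polyiamondscaterpillar}: for $k \neq 5$, every saturated tree-like polyhex or polyiamond of size $2k$ is linear, its inner cells of degree three lying in two staggered rows as in Figure~\ref{F:polyhex-even}(b) (resp.\ Figure~\ref{F:polyiamonds-even}(b)). The key step is to observe that any two such linear structures of the same size differ only by a translation, rotation or reflection: once the size $2k$ is fixed, the number of inner cells is fixed and the staggered double-row arrangement, together with the forced placement of the end leaves, is determined up to isometry. Hence the linear family contributes exactly one free form for every $k$, giving $shex_i(2k) = stri_i(2k) = 1$ whenever $k \neq 5$. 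The small cases $k = 1, 2, 3$ are checked directly and conform to this; their smaller fixed counts in Proposition~\ref{prop:enumhext} merely reflect the larger isometry stabilizers of these highly symmetric shapes (a single edge, a claw, a two-cell linear form), and each still yields a unique free form.

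It remains to treat the exceptional value $k = 5$. Here, in addition to the linear caterpillar of size $10$, Proposition~\ref{prop:enumhext} records the extra fixed forms coming from the compact cluster of Figure~\ref{F:polyhex-even}(a) (resp.\ Figure~\ref{F:polyiamonds-even}(d)), accounting for the jump from $6$ to $8$ fixed forms. I would verify that this cluster is genuinely not isometric to the linear caterpillar of size $10$: its four inner cells do not lie in two staggered rows—indeed, in the polyiamond case it is precisely the unique non-caterpillar polyiamond of Proposition~\ref{P:Polyiamondscaterpillar}, and in the polyhex case the inner cells form a non-linear cluster. Thus it constitutes a second isometry class, and $shex_i(10) = stri_i(10) = 2$, completing the case analysis and yielding the stated piecewise formula.

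The main obstacle I anticipate is not the generic linear case but the twofold verification at the boundary of the argument: one must confirm that the staggered double-row pattern admits no second, genuinely distinct free arrangement for \emph{any} $k$ (ruling out a reflected or otherwise inequivalent placement of the inner cells), and that the size-$10$ exceptional cluster is truly a separate isometry class rather than a rotated or reflected copy of the linear form. Once these two points are settled, the identity $shex_i(2k) = stri_i(2k)$ is inherited directly from the coincidence of the fixed counts already proved in Proposition~\ref{prop:enumhext}, and the corollary follows.
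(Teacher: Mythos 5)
Your proposal is correct and follows essentially the same route as the paper, which simply declares the corollary ``immediate from Proposition~\ref{prop:enumhext}'': both arguments reduce the free count to the fixed count via the geometric classification (linearity for $k \neq 5$, plus the exceptional non-linear cluster at $k=5$), with the discrepancies in the small fixed counts for $k=1,2,3$ explained by larger symmetry stabilizers. Your version merely makes explicit the orbit--stabilizer bookkeeping that the paper leaves implicit.
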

\begin{proof}
This is immediate from Proposition \ref{prop:enumhext}.
\end{proof}

%

\begin{corollary}
  There exists a bijection  from free and fixed saturated tree-like polyiamonds to  polyhexes.
\end{corollary}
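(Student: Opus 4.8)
The plan is to combine the structural characterizations already obtained with the exact counting formula of Proposition~\ref{prop:enumhext}. By Proposition~\ref{prop:hex}(iii), every saturated tree-like polyhex with more than $10$ leaves is linear, and by Proposition~\ref{P:Polyiamondscaterpillar}, every saturated tree-like polyiamond of size $n \neq 10$ is a linear caterpillar. Hence, away from the single exceptional size $2k = 10$, both families consist entirely of \emph{linear} structures in which the $k$ inner cells of degree $3$ sit in two staggered rows, each decorated by exactly enough pendant leaves to reach degree $3$. First I would record the key observation that such a linear structure is, in either lattice, completely determined by the length $k$ of its chain of inner cells (together with a lattice placement in the fixed case): the leaves are forced by the saturation requirement. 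This is what makes the two families ``look the same'', and it suggests the natural map $\psi$ sending a linear saturated polyhex to the linear saturated polyiamond whose inner chain has the same length $k$, and conversely.

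Next I would verify that $\psi$ is a size-preserving bijection on the generic strata. In the free setting this is essentially immediate: by the corollary to Proposition~\ref{prop:enumhext}, for every $k \neq 5$ there is exactly one free saturated polyhex and exactly one free saturated polyiamond of size $2k$, so the two unique representatives correspond, and $\psi$ is forced. In the fixed setting, $\psi$ must match lattice placements of the inner chain; this is precisely the content of the value $shex_t(2k) = stri_t(2k) = 6$ (and the small-case values at $k = 1,2,3$) recorded in Proposition~\ref{prop:enumhext}, so the same orientation count on both sides lets $\psi$ restrict to a bijection of fixed structures, which then passes to the quotient by isometry to recover the free version. Because the inner-chain skeleton is recovered uniquely from either structure, $\psi$ has an explicit inverse.

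It remains to treat the exceptional size $2k = 10$, where Proposition~\ref{prop:enumhext} gives $shex_t(10) = stri_t(10) = 8$ and its corollary gives $shex_i(10) = stri_i(10) = 2$: in each lattice there is, besides the linear caterpillar, one additional non-linear saturated structure, namely the polyform of Figure~\ref{F:polyhex-even}(a) in the hexagonal case and of Figure~\ref{F:polyiamonds-even}(d) in the triangular case. I would match these two exceptional structures (and their fixed orientations) directly, thereby extending $\psi$ to a bijection on all of $\Hex_i$ and $\Tri_i$, and on $\Hex_t$ and $\Tri_t$. The main obstacle is not the existence of the correspondence---once Proposition~\ref{prop:enumhext} has matched the two counting functions size-by-size, a bijection exists for purely cardinality reasons---but rather producing a \emph{natural} map uniform in $k$; the only genuine verification is that the forced-leaf decoration behaves identically along the linear chain in the two lattices, so that the inner-cell skeleton alone determines the structure, which together with the isolated check at size $10$ completes the argument.
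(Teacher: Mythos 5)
Your argument is correct, but it takes a genuinely different route from the paper. You build the bijection combinatorially: on the generic stratum you match the (unique, up to the orientation count) linear caterpillars of equal inner-chain length $k$, invoking Proposition~\ref{prop:enumhext} and its corollary to certify that the two counting functions agree size-by-size, and you patch in the exceptional size $2k=10$ by pairing the two non-linear structures by hand. The paper instead exhibits a single uniform geometric map: it truncates each triangular cell to a hexagonal cell (exploiting the duality of the two lattices, as in Figure~\ref{F:correspondance}), observes that this preserves adjacency and hence degree distribution, and then uses the linearity of both families to see that the map is well defined, injective and surjective. The paper's map buys more than existence --- it is canonical, independent of any prior enumeration, handles the exceptional size $10$ automatically (truncation carries the non-linear polyiamond of Figure~\ref{F:polyiamonds-even}(d) to the non-linear polyhex of Figure~\ref{F:polyhex-even}(a)), and immediately yields that corresponding polyforms have identical degree distributions. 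Your approach buys logical economy: once Proposition~\ref{prop:enumhext} is in hand, the bijection follows for essentially cardinality reasons, and you are more explicit than the paper about the exceptional stratum and about descending from fixed to free structures. The one point you should tighten is the fixed case: asserting that ``the same orientation count on both sides lets $\psi$ restrict to a bijection'' establishes equinumerosity but not a preferred matching of the six placements; either declare that any bijection of the orientation classes suffices for the statement as claimed, or adopt the truncation map, which matches placements canonically.
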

\begin{proof}
  The correspondance is established by simply truncating the triangles to form hexagons, as shown in Figure \ref{F:correspondance}.
  One can see that this correspondance is well defined because a saturated polytriangle is linear and its image, being linear, is also a saturated polyhex. It must also be one-to-one and onto because of the linear shape of both families of saturated polyforms. 
Moreover since  the correspondance preserves adjacency,  the image of a tree-like polyiamond is a tree-like polyhex with the same degree distribution. 
Therefore the correspondance is bijective.
\end{proof}
\begin{figure}
  \centering
  \begin{tikzpicture}
  \newcommand\tricell[4]{
    \begin{scope}[
      scale = 0.3, 
      xshift = #1 cm * 0.866,
      yshift = 1.5 * #2 cm - (#3 - 1) * 0.25 cm,
    ] 
      \draw[fill=#4] (-30*#3:1cm) -- (90*#3:1cm) -- (210*#3:1cm) -- cycle; 
    \end{scope}
  }
  \newcommand\triupleaf[2]{
    \tricell#1#21{coldeg1}
  }
  \newcommand\tridownleaf[2]{
    \tricell#1#2{-1}{coldeg1}
  }
  \newcommand\triupint[2]{
    \tricell#1#21{coldeg3}
  }
  \newcommand\tridownint[2]{
    \tricell#1#2{-1}{coldeg3}
  }

  \newcommand\hexcell[4]{
    \begin{scope}[
      scale = 0.3, 
      xshift = #1 cm * 0.866,
      yshift = 1.5 * #2 cm - (#3 - 1) * 0.25 cm,
    ] 
      \draw[fill=#4, thick] (0*#3:0.577cm) -- (60*#3:0.577cm) -- (120*#3:0.577cm)-- (180*#3:0.577cm)-- (240*#3:0.577cm)   -- (300*#3:0.577cm) -- cycle; 
    \end{scope}
  }
  \newcommand\hexupleaf[2]{
    \hexcell#1#21{coldeg1}
  }
  \newcommand\hexdownleaf[2]{
    \hexcell#1#2{-1}{coldeg1}
  }
  \newcommand\hexupint[2]{
    \hexcell#1#21{coldeg3}
  }
  \newcommand\hexdownint[2]{
    \hexcell#1#2{-1}{coldeg3}
  }
  \begin{scope}[xshift=0cm]
    \triupleaf{1}{1}
    \tridownint{2}{1}
    \triupleaf{2}{2}
    \triupint{3}{1}
    \tridownleaf{3}{0}
    \tridownint{4}{1}
    \triupleaf{4}{2}
    \triupint{5}{1}
    \tridownleaf{5}{0}
    \tridownint{6}{1}
    \triupleaf{6}{2}
    \triupleaf{7}{1}
  \end{scope}
  \begin{scope}[xshift=2.5cm]
    \triupleaf{1}{1}
    \tridownint{2}{1}
    \triupleaf{2}{2}
    \triupint{3}{1}
    \tridownleaf{3}{0}
    \tridownint{4}{1}
    \triupleaf{4}{2}
    \triupint{5}{1}
    \tridownleaf{5}{0}
    \tridownint{6}{1}
    \triupleaf{6}{2}
    \triupleaf{7}{1}

    \hexupleaf{1}{1}
    \hexdownint{2}{1}
    \hexupleaf{2}{2}
    \hexupint{3}{1}
    \hexdownleaf{3}{0}
    \hexdownint{4}{1}
    \hexupleaf{4}{2}
    \hexupint{5}{1}
    \hexdownleaf{5}{0}
    \hexdownint{6}{1}
    \hexupleaf{6}{2}
    \hexupleaf{7}{1}
  \end{scope}
  \begin{scope}[xshift=5cm]
    \hexupleaf{1}{1}
    \hexdownint{2}{1}
    \hexupleaf{2}{2}
    \hexupint{3}{1}
    \hexdownleaf{3}{0}
    \hexdownint{4}{1}
    \hexupleaf{4}{2}
    \hexupint{5}{1}
    \hexdownleaf{5}{0}
    \hexdownint{6}{1}
    \hexupleaf{6}{2}
    \hexupleaf{7}{1}
  \end{scope}
  \draw[->] (2.1,0.5) -- (2.4,0.5);
  \draw[->] (4.6,0.5) -- (4.9,0.5);
\end{tikzpicture}
  \caption{The bijection from  satured polyiamonds to saturated polyhexes.}
  \label{F:correspondance}
\end{figure}
\subsection{Saturated Tree-Like Polycubes}
\label{S:Saturatedpolycubes}

\begin{figure}[t]
  \centering
  \begin{tikzpicture}[xscale=0.8, yscale=1.1]
    \node at (   0,    0) {\includegraphics[width=9mm]{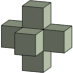}};
    \node at (   3,    0) {\includegraphics[width=9mm]{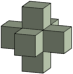}};
    \node at ( 1.5,   -2) {\includegraphics[width=15mm]{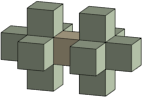}};
    \node at ( 0.5,   -4) {\includegraphics[width=21mm]{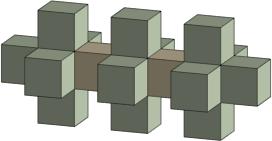}};
    \node at (3.25,   -4) {\includegraphics[width=15mm]{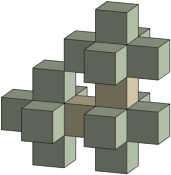}};
    \node at (   0,   -1) {$n = 6$};
    \node at (   3,   -1) {$n = 7$};
    \node at ( 1.5,   -3) {$n = 13$};
    \node at ( 1.5,   -5) {$n = 19$};

    \begin{scope}[yshift=-5mm]
    \node at ( 7.5,    0) {\includegraphics[width=27mm]{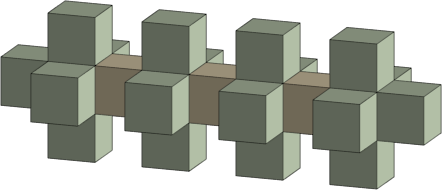}};
    \node at (  11,    0) {\includegraphics[width=21mm]{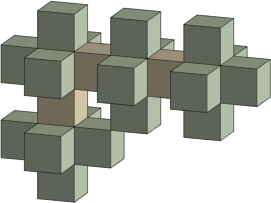}};
    \node at ( 7.5, -1.5) {\includegraphics[width=21mm]{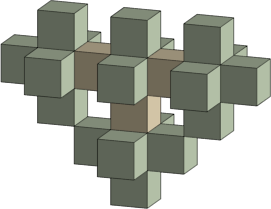}};
    \node at (10.5, -1.5) {\includegraphics[width=18mm]{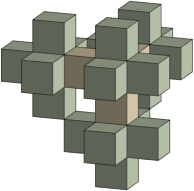}};
    \node at (11.5, -3.2) {\includegraphics[width=15mm]{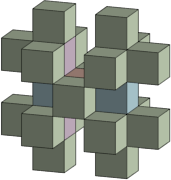}};
    \node at ( 9.0, -3.2) {\includegraphics[width=15mm]{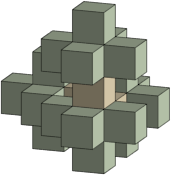}};
    \node at ( 6.5, -3.2) {\includegraphics[width=21mm]{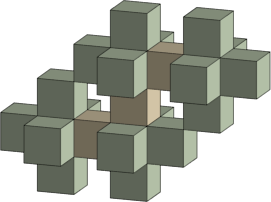}};
    \node at (   9,-4.25) {$n = 25$};
    \end{scope}
  \end{tikzpicture}
  \caption{All saturated tree-like free polycubes of size $n \in \{6,7,13,19,25\}$.}
  \label{F:saturated-basic-polycubes}
\end{figure}

In the polycube case, the leaf function is bounded by the linear functions 
$$\overline{L_\cub}(x)=\frac{28x+36}{41} \quad \text{and} \quad
\underline{L_\cub}(x)=\frac{28x-6}{41}.$$
In particular, we have $L_\cub(n) \geq \overline{L_\cub}(n)$ if and only if
$$n \in \{6,7,13,19,25\} \cup \{41k + 28 \mid k \in \N\}.$$

Let $\satcubset_{,i}(n)$
 be the set of saturated tree-like free polycubes of size $n$.
The sets $\satcubset_{,i}(n)$ for $n \in \{6,7,13,19, 25\}$ are easily found by inspection (see Figure~\ref{F:saturated-basic-polycubes}).
It is worth mentioning that, for all but one of these, the inner cells have degree  equal either to $2$ or $6$.
In fact, the rightmost, lowest tree-like polycube with $25$ cells in Figure~\ref{F:saturated-basic-polycubes} is the smallest saturated polycube with cells of degrees $3$, $4$ and $5$, a degree distribution that becomes unavoidable in larger saturated polycubes.

%

For the remainder of this subsection, we focus on saturated polycubes of size $n = 41k + 28$, for  nonnegative integers $k$.
A first key observation is that the branches occurring in such saturated tree-like polycubes are quite restricted.

\begin{figure}
  \centering
  \begin{tikzpicture}
    \node at (0,0) {\includegraphics[width=35mm]{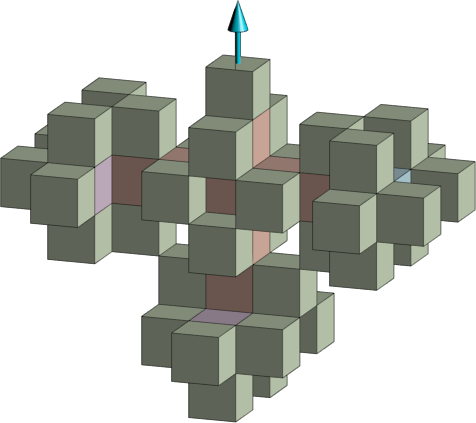}};
    \node at (4,0) {\includegraphics[width=19mm]{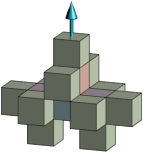}};
    \node at (8,0) {\includegraphics[width=23mm]{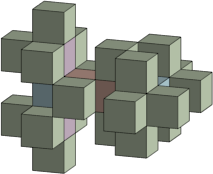}};
    \node at (0,-2) {(a)};
    \node at (4,-2) {(b)};
    \node at (8,-2) {(c)};
  \end{tikzpicture}
  \caption{The saturated branch (a) is substitutable by the saturated branch (b). (c) A saturated tree-like polycube of $28$ cells.}\label{F:superantenna}
\end{figure}

\begin{lemma}\label{L:satsurj}
Let $T \in \satcubset_i(41k + 28)$ for some integer $k \geq 0$. Then at least one of the following two conditions hold.
\begin{enum}
  \item $T$ is the tree-like polycube depicted in Figure~\ref{F:superantenna}(c);
  \item The rooted tree-like polycube $T_{56}$ depicted in Figure~\ref{F:superantenna}(a) is a branch of $T$.
\end{enum}
\end{lemma}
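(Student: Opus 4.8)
\textbf{}The plan is to turn the saturation hypothesis into rigid arithmetic and then locate the branch $T_{56}$ at an extremal part of $T$. First I would record the exact counts: since $T$ is saturated of size $n(T)=41k+28$, it is fully leafed with $n_1(T)=\overline{L_\cub}(41k+28)=28k+20$. Combining the handshake identity $\sum_d d\,n_d(T)=2(n(T)-1)$ with $\sum_d n_d(T)=n(T)$ yields $n_1(T)=2+\sum_{d\ge 3}(d-2)\,n_d(T)$, so the extremal value of $n_1(T)$ forces the multiset of inner-cell degrees of $T$ to be as concentrated as the cubic lattice and acyclicity permit; in particular, the degree-$3,4,5$ pattern noted after Figure~\ref{F:saturated-basic-polycubes} is unavoidable once $k\ge 1$. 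These constraints are what make the branch structure \emph{quite restricted}.

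Next I would split on $k$. For $k=0$ the size is $28<56$, so $T_{56}$ cannot be a branch and condition~(ii) is vacuous; hence I only have to check that the unique saturated free polycube of size $28$ is the one in Figure~\ref{F:superantenna}(c). This is the same finite inspection that produced Figure~\ref{F:saturated-basic-polycubes}, now extended to size $28$. For $k\ge 1$ the size is at least $69$, and the goal becomes to exhibit $T_{56}$ as a branch.

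For the main case I would exploit the finiteness already established for abundant rooted tree-like polycubes. Arguing as in Lemma~\ref{L:abundant}, saturation forces the branches at an extremal end of $T$ to be abundant, since any sparse end-branch could be substituted by a denser one in the sense of Definition~\ref{D:sparse-abundant}, contradicting extremality; and by Lemma~\ref{L:observations}(i) an abundant branch has height at most $10$. Hence the structure of $T$ around a deepest leaf is drawn from the finite catalogue enumerated by Algorithm~\ref{A:enumeration}. I would pick a branch $B$ of maximal height, argue that saturation pins its degree distribution to the extremal one found above, and then scan the finite list of abundant branches compatible with an ambient saturated polycube of size $41k+28$ with $k\ge 1$; imposing the acyclicity (hull) constraints of Definition~\ref{D:branch} should leave exactly the super-antenna $T_{56}$ of Figure~\ref{F:superantenna}(a), which is condition~(ii).

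The hard part will be this last step. Translating \emph{saturation forces local extremality} into the statement that the finite catalogue admits only $T_{56}$ at the extremal end is the crux, and I expect it to reduce, as elsewhere in the paper, to a brute-force verification over the abundant rooted tree-like polycubes of height at most $10$ computed by Algorithm~\ref{A:enumeration}. The conceptual content is twofold: that the extremal degree distribution propagates geometrically along $T$, and that the cubic-lattice packing together with acyclicity eliminates every extremal end other than $T_{56}$ --- and, for $k=0$, every saturated polycube of size $28$ other than Figure~\ref{F:superantenna}(c).
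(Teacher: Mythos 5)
Your overall strategy --- reduce the claim to a finite, computer-checked catalogue of branches --- is the same as the paper's, but the specific reduction you propose has a genuine gap. You claim that ``saturation forces the branches at an extremal end of $T$ to be abundant, since any sparse end-branch could be substituted by a denser one \dots contradicting extremality.'' This transplants the argument of Lemma~\ref{L:abundant}, but that argument only yields a contradiction when $T$ is a \emph{minimal counterexample} with $n_1(T) > \ell_\cub(n(T))$. A saturated polycube attains $n_1(T) = \ell_\cub(n(T))$ with equality, so substituting a sparse branch $B$ by an abundant $B'$ merely produces a smaller polycube $T'$ with $n_1(T') \geq n_1(T) - \Delta\ell_\cub(n(B)-n(B'))$, i.e.\ another (possibly still fully leafed) polycube; nothing extremal is violated. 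Indeed large saturated polycubes necessarily contain sparse branches: by Lemma~\ref{L:observations}(i) every abundant branch has height at most $10$, while a saturated polycube of size $41k+28$ with $k$ large has branches of height far exceeding $10$. So the catalogue of \emph{abundant} branches produced by Algorithm~\ref{A:enumeration} is not the right finite object to scan, and the step in which you ``pin'' the extremal end inside that catalogue does not go through.

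What the paper actually does is run a \emph{different} finite computation: a variant of Algorithm~\ref{A:enumeration} that enumerates \emph{saturated branches}, i.e.\ branches occurring in some saturated tree-like polycube, and it observes computationally that the only such branch that can be extended is $T_{56}$. Hence, if $T_{56}$ is not a branch of $T$, then $T$ has no extendable saturated branch and must be one of the finitely many saturated polycubes output by that enumeration, namely those of Figure~\ref{F:saturated-basic-polycubes} (whose sizes are never of the form $41k+28$) or the $28$-cell polycube of Figure~\ref{F:superantenna}(c). Your $k=0$ base case and your instinct that the crux is a brute-force verification are both consistent with the paper, but to close the argument you must justify why the local structure of a saturated polycube near a leaf is confined to a finite list, and that justification cannot be routed through Definition~\ref{D:sparse-abundant} and Lemma~\ref{L:abundant} as you propose.
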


\begin{proof}
If $T_{56}$ is a branch of $T$, then the lemma follows immediately.
Otherwise, assume that $A_{56}$ is not a branch of $T$.
Using a slight variation of the program described in Lemma \ref{L:observations}, we observed that the only saturated branch (i.e., a branch that occurs in a saturated tree-like polycube) that can be extended is $A_{56}$.
  Therefore, by exhaustive inspection of all generated final branches, one notices that the only saturated examples are either those of Figure~\ref{F:saturated-basic-polycubes} or the one in Figure~\ref{F:superantenna}(c).
\end{proof}

A special family of tree-like polycubes is defined as follows.
 \begin{definition}[$4$-cross and $4$-tree polycubes]
A tree-like polycube of $5$ coplanar cells with exactly one inner cell of degree $4$ is called a \emph{$4$-cross}.
Moreover, for any integer $k\geq 0$, a tree-like polycube is called a \emph{$4$-tree} when it is the graft union of $k$ $4$-crosses.
 \end{definition}
A $4$-cross and a $4$-tree are depicted in Figures \ref{fig:4cross} and \ref{fig:4cross2} respectively.
It is easy to see that, for any $4$-tree $T$ and any vertex $x \in V(T)$, we have $\deg_T(x) \in \{1,4\}$.

 \begin{figure}[b]
\centering
\begin{subfigure} {0.35 \textwidth}
 \centering
\includegraphics[width=9mm]{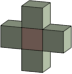}
\caption{A $4$-cross} 
\label{fig:4cross}
\end{subfigure}%
\begin{subfigure}{0.35\textwidth}
  \centering
\includegraphics[width=15mm]{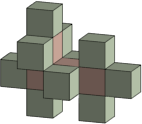}
\caption{A $4$-tree with $4$ inner cells} 
\label{fig:4cross2}
\end{subfigure}
\caption{ $4$-trees} 
\end{figure}

\begin{figure}[t]
  \centering
  \begin{tikzpicture}
  \node at (0,0) {\includegraphics[width=19mm]{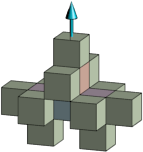}};
  \node at (0,-1.7) {$U_{15}$};
  \node at (4,0) {\includegraphics[width=24mm]{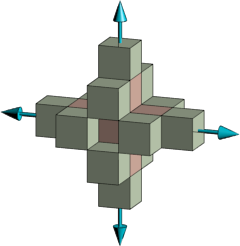}};
  \node at (4,-1.7) {$U_{17}$};
  \end{tikzpicture}
  \caption{The two basic tree-like polycubes used in the bijection $\phi$.}\label{F:molecules}
\end{figure}

 We claim that there exists a natural bijection between the set $\ftcubset(3k + 2)$ of free $4$-trees and the set $\satcubset_{,i}(41k + 28)$ of free saturated tree-like polycubes.  
 For any integer $k \geq 0$, let
 $$\phi : \ftcubset(3k + 2) \rightarrow \satcubset_{,i}(41k + 28).$$
be the function such that, for any $T \in \ftcubset(3k+2)$ with vertex set $V(T)$, edge set $E(T)$ and  $x \in V(T)$,
\begin{align}\label{eq:defphi}
\phi(x) = \begin{cases}
  U_{15}, & \mbox{if $\deg_T(x) = 1$;} \\
  U_{17}, & \mbox{if $\deg_T(x) = 4$.}
\end{cases}
\end{align}
where $U_{15}$ and $U_{17}$ are illustrated in Figure~\ref{F:molecules}
and for any edge $\{x,y\} \in E(T)$,
$$\phi(\{x,y\}) = \phi(x) \graft \phi(y).$$
so that we define $\phi(T)$ as follow:
$$\phi(T) = \mathop{\graft}\limits_{\{x,y\} \in E(T)} \phi(\{x,y\}).$$
In other words, the function $\phi$ substitutes each leaf of $T$  with a rooted, directed fully leafed tree-like polycube of $15$ cells. Each internal cell of $T$ is replaced by a $4$-tree of $17$ cells, and all these trees are grafted according to the adjacency relation in $T$.
It is worth mentioning that the result is unique up to isometry since the orientation chosen for one edge induces the orientation of all other edges in $T$.
See Figure~\ref{F:phi} for an illustration of the application of the map $\phi$ on a $4$-tree having $2$ inner cells.

\begin{figure}[t]
  \centering
  \begin{tikzpicture}
    \node at (0.5,0.0) {\includegraphics[width=12mm]{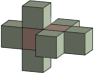}};
    \node at (2,0.5) {$\phi$};
    \node at (2,0.0) {$\longmapsto$};
    \begin{scope}[xshift=5cm]
      \node at (   0, 0.0) {\includegraphics[width=17mm]
                           {saturated/trimmed-bijection-2.png}};
      \node at (   0,-1.6) {\includegraphics[width=10mm]
                           {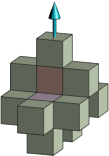}};
      \node at (   0, 1.6) {\includegraphics[width=10mm]
                           {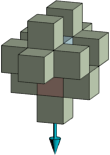}};
      \node at (-1.48, 0.14) {\includegraphics[width=13mm]
                           {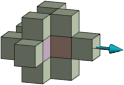}};
      \node at ( 2.4, 0.45) {\includegraphics[width=9mm]
                           {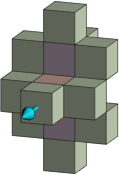}};
      \node at ( 1.7,-0.13) {\includegraphics[width=17mm]
                           {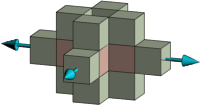}};
      \node at ( 1.0,-0.7) {\includegraphics[width=9mm]
                           {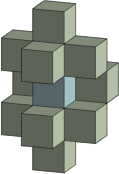}};
      \node at ( 3.20,-0.30) {\includegraphics[width=13mm]
                           {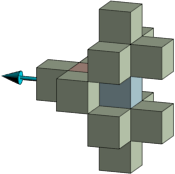}};
    \end{scope}
  \end{tikzpicture}
  \caption{The map $\phi$ applied to a $4$-tree (the arrows indicate the graft sites).}
  \label{F:phi}
\end{figure}
\begin{theorem}\label{th:saturated}
For any integer $k \geq 0$, the function $\phi$ is a bijection from $\ftcubset(3k + 2)$ to $SAT_i(41k + 28)$.
\end{theorem}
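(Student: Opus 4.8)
The plan is to establish three facts in turn: that $\phi$ maps $\ftcubset(3k+2)$ into $SAT_i(41k+28)$, that it is injective, and that it is surjective, the last being the only step that genuinely needs the saturation hypothesis via Lemma~\ref{L:satsurj}. First I would check that $\phi(T)$ is well-defined. A $4$-tree $T$ of size $3k+2$ has every vertex of degree $1$ or $4$, hence exactly $k$ inner cells and $2k+2$ leaves, so by \eqref{eq:defphi} the polycube $\phi(T)$ is assembled from $k$ copies of $U_{17}$, from $2k+2$ copies of $U_{15}$, and from one graft union per edge, that is $3k+1$ grafts. The substantive point here is that every such graft is well-defined: connectedness is automatic, while acyclicity and the absence of overlaps follow from the rigidity of the two gadgets together with the remark after \eqref{eq:defphi} that fixing the orientation of one edge forces the placement of every other gadget, so that two admissible choices differ only by an isometry. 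Granting this, Lemma~\ref{L:graft} yields $n(\phi(T)) = 17k + 15(2k+2) - 2(3k+1) = 41k+28$ and $n_1(\phi(T)) = 12k + 11(2k+2) - 2(3k+1) = 28k+20$ (using $n_1(U_{17})=12$ and $n_1(U_{15})=11$). Since $28k+20 = \overline{L_\cub}(41k+28)$, the tree-like polycube $\phi(T)$ is fully leafed and saturated, so $\phi(T) \in SAT_i(41k+28)$.

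For injectivity I would exhibit a left inverse obtained by block contraction. In $\phi(T)$ the two gadgets appear as rigid, identifiable sub-polycubes whose graft sites are determined; collapsing each copy of $U_{15}$ to a vertex of degree $1$, each copy of $U_{17}$ to a vertex of degree $4$, and joining two collapsed vertices precisely when the corresponding blocks are grafted recovers $T$ together with its degree labelling. Thus no information is lost and $\phi$ is injective. The only delicate point is that the decomposition of $\phi(T)$ into gadget blocks is unique, which again follows from the forced orientation and the local degree pattern imposed by saturation.

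The heart of the argument is surjectivity, which I would prove by induction on $k$. For $k=0$ the size $41\cdot 0 + 28 = 28$ is smaller than the $56$ cells of the super-antenna $T_{56}$, so $T_{56}$ cannot be a branch; Lemma~\ref{L:satsurj} then forces any $S \in SAT_i(28)$ to be the $28$-cell polycube of Figure~\ref{F:superantenna}(c), which is exactly $\phi(P_2) = U_{15} \graft U_{15}$ for the unique $2$-cell $4$-tree $P_2$. For $k \geq 1$ the size $41k+28$ exceeds $28$, so Lemma~\ref{L:satsurj} guarantees that $T_{56}$ is a branch of $S$. Now $T_{56}$ is the $\phi$-image of a pendant $4$-cross, namely one inner gadget $U_{17}$ carrying three leaf gadgets $U_{15}$ ($17 + 3\cdot 15 - 2\cdot 3 = 56$ cells), and by Figure~\ref{F:superantenna}(a)--(b) it is substitutable by the single leaf gadget $U_{15}$. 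Performing this substitution produces $S'$ with $n(S') = 41k+28 - (56-15) = 41(k-1)+28$; moreover, by Lemma~\ref{L:graft}, $n_1(S') = n_1(S) - (n_1(T_{56}) - n_1(U_{15}))$, and since $n_1(T_{56}) = 12 + 3\cdot 11 - 2\cdot 3 = 39$ this gives $n_1(S') = (28k+20) - 28 = 28(k-1)+20$, so $S'$ is again saturated. By the induction hypothesis $S' = \phi(T')$ for some $4$-tree $T'$ of size $3(k-1)+2$, and reversing the substitution expands the leaf of $T'$ corresponding to that copy of $U_{15}$ back into a pendant $4$-cross; this yields a $4$-tree $T$ of size $3k+2$ with $\phi(T) = S$.

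The step I expect to be the main obstacle is exactly this inductive reduction: one must verify that the substitution supplied by Lemma~\ref{L:satsurj} preserves saturation (so that the induction hypothesis applies) and that collapsing the branch $T_{56}$ to $U_{15}$ is the precise inverse, at the level of $4$-trees, of contracting a pendant cross to a leaf. The geometric claims used for well-definedness (acyclicity and uniqueness up to isometry of the grafts) and for injectivity (uniqueness of the block decomposition) are more routine, once the rigidity of $U_{15}$ and $U_{17}$ and the forced propagation of orientations across $T$ have been set up.
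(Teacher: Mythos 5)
Your overall architecture --- well-definedness, the cell/leaf count showing $n_1(\phi(T))=28k+20=\overline{L_\cub}(41k+28)$, injectivity, and surjectivity by induction on $k$ via Lemma~\ref{L:satsurj} with the branch $T_{56}$ substituted by $U_{15}$ --- is exactly the paper's proof, and your explicit check that the substitution preserves saturation (via $n_1(T_{56})=39$, so $n_1(S')=28(k-1)+20$) makes precise a step the paper leaves implicit.

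The one genuine gap is your justification of acyclicity. You derive it from ``the rigidity of the two gadgets'' and the fact that fixing the orientation of one edge forces the placement of every other gadget. That remark only gives uniqueness of $\phi(T)$ up to isometry; it says nothing about why two gadgets $\phi(x)$ and $\phi(y)$ attached to \emph{non-adjacent} cells $x,y$ of $T$ cannot contain a pair of adjacent (or coincident) cells in $\cube$, which is precisely how a cycle or an overlap would arise --- the placements are forced, but forced placements can still collide. The paper closes this by a containment argument: each $\phi(x)$ sits inside a translated copy $C_x$ of the fixed bounding polycube of Figure~\ref{F:bijection-hull}(a) (a $3\times 3\times 3$ cube with six extra cells), and for non-adjacent $x,y$ the copies $C_x$ and $C_y$ can share at most a point or a segment, never a face and never a cell; hence no two cells of $\phi(x)$ and $\phi(y)$ are adjacent and no cycle can form. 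Some argument of this kind (a bounding region per gadget plus a separation statement for non-adjacent cells of $T$) is needed; without it the well-definedness of the $3k+1$ grafts, and hence everything downstream, is unsupported. The same containment is also what underwrites the ``uniqueness of the block decomposition'' you invoke for injectivity, so it is worth stating once and reusing.
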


\begin{figure}
  \centering
  \begin{tikzpicture}
    \node at (0,0.2) {\includegraphics[width=19mm]{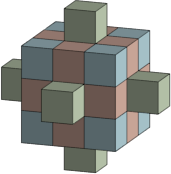}};
    \node at (3,0.4) {\includegraphics[width=19mm]{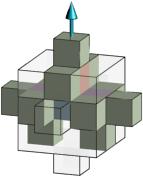}};
    \node at (6,0.2) {\includegraphics[width=26mm]{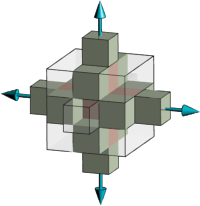}};
    \node at (0, -1.5) {(a)};
    \node at (3, -1.5) {(b)};
    \node at (6, -1.5) {(c)};
  \end{tikzpicture}
  \caption{(a) A $3 \times 3$ cube with $6$ extra cells. (b) and (c) $\phi(x)$ is included in the polycube (a).}
  \label{F:bijection-hull}
\end{figure}

\begin{proof}
Let $T\in \ftcubset(3k+2)$. 
We must verify that (i) $\phi(T)$ is acyclic, (ii) $\phi(T)$ is saturated, (iii) $\phi$ is injective and (iv) $\phi$ is surjective.

(i) Arguing by contradiction, assume that there exists some cycle in $\phi(T)$.
Since, for any edge $e$ of $T$, $\phi(e)$ is acyclic, the cycle must necessarily contain two adjacent cells in $\phi(x)$ and $\phi(y)$, for some $x,y$ non adjacent in $T$.
Clearly, the cubic cells $x$ and $y$ must share a point or a segment for  otherwise the cycle could have adjacent cells  from $\phi(x)$ and $\phi(y)$.
But both $\phi(x)$ and $\phi(y)$ are contained in translated copies $C_x$ and $C_y$ of the polycube depicted in Figure \ref{F:bijection-hull}(a).
These copies $C_x$ and $C_y$ could share either a point or a segment, but they cannot share a face, nor have intersecting cells.
Hence, no cycle cannot exist in $\phi(T)$.

(ii) Notice that $T$ has $3k+2$ cells, where $k$ of them are inner cells while $2k+2$ are leaves.
Also, $T$ contains $3k+1$ edges, which corresponds to the number of graft unions performed in the construction of $\phi(T)$.
Finally, denote respectively $L_{15}=\phi(x)$ and $I_{17}=\phi(y)$   the two images $\phi(x),\phi(y)$ of the leaves $x$ and he inner cells $y$ of $T$ shown in Equation \ref{eq:defphi}.
Then
\begin{eqnarray*}
n(\phi(T))
    & = & kn(L_{17})+(2k+2)n(L_{15})-2(3k+1)\\
    & = & 41k + 28,\\
  n_1(\phi(T))
    & = & kn_1(L_{17}) + (2k+2)n_1(L_{15}) - 2(3k + 1) \\
    & = & k \cdot 12 + (2k+2) \cdot 11 - 2(3k + 1) \\
    & = & 28k + 20,
\end{eqnarray*}
so that $\phi(T)$ is indeed saturated.

(iii) The injectivity of $\phi$ follows directly from the definition of $\phi$ and the fact that the sets $\ftcubset$ and $\satcubset_i$ are defined up to isometry.
Indeed, if $\phi(T) = \phi(T')$ for some $T,T' \in \ftcubset$, then $T$ and $T'$ must be isometric.

(iv) The proof that $\phi$ is surjective follows from Lemma \ref{L:satsurj}.
  More precisely, let $T \in \satcubset_i(41k + 28)$.
  We proceed by induction on $k$.
  \textsc{Basis}.
  If $k = 0$, then $T$ is isometric to the polycube $L_{15}\graft L_{15}$ depicted in Figure~\ref{F:superantenna}(c), so that $T = \phi(U)$, for some tree-like polycube $U$ of size $2$.
  \textsc{Induction}.
  By Lemma~\ref{L:satsurj}, the rooted tree-like polycube 
  $B=L_{15}\graft \left( L_{15}\graft L_{17}\graft L_{15}\right)$
   shown in Figure~\ref{F:superantenna}(a) is a branch of $T$, which is substitutable by the saturated branch $L_{15}$ shown in  Figure~\ref{F:superantenna}(b).
  Let $T'$ be the polycube obtained from  $T$ by the substitution of $B$ by $L_{15}$.
  By the induction hypothesis, there exists some $U' \in \ftcubset$ such that $\phi(U') = T'$.
  Let $U =  U' \graft U_5$, where $U_5$ is a $4$-cross.
   $U$ exists because $B$ is substitutable by $B'$ and the graft union $U' \graft U_5$ is well defined
   so that $\phi(U) = \phi( U') \graft \phi(U_5)$.
  Hence $T = \phi(U)$, concluding the proof.
\end{proof}

\section{Concluding Remarks}
\label{S:conclusion}

Theorems~\ref{T:2d} and \ref{T:3d} are giving the exact values  for the ratios $L_\squ(n)/n$ and $L_\cub(n)/n$ which are respectively $1/2$ and $28/41$.  For polycubes of higher dimension $d>3$, elementary arguments allow to find lower and upper bounds for $L_d(n)/n$.  Indeed, for any integer $d>2$,  it is always possible to build $d$-dimensional  tree-like polycubes by alternating cells of degree $2d$ and $2$, as was done in dimension $2$ and $3$ (see for example polycube $U_{19}$ in Figure \ref{F:family-3d}). Since this connected set of cells has the ratio 
$$L_d(n)/n=(d-1)/d,$$
 this expression becomes a lower bound for $L_d(n)/n$ in all dimensions $d > 1$.
Similarly the structure of the polycube $R_{12}$ in Figure \ref{F:family-3d} can be extended in all dimensions $d>3$ as the graft union of three cells with respective degrees $2d-1,3,2d-1$. Let $T$ be some $d$-dimensional tree-like polycube whose inner cells have degree $2d-1$, $3$ and $2d - 1$. By an inductive argument, it seems possible to prove that the ratio of any $d$-dimensional tree polycube cannot exceed the ratio $n_1(T)/n(T)=(4d-3)/4d$.

Our future work on fully leafed polyforms will include the extension of the present results to the regular lattices $\Z^d,$ for $d>3$, to nonperiodic lattices and more generally to other families of graphs.  

\bibliographystyle{plain}
\bibliography{iwoca2017-fully-leafed-polycubes}

\end{document}